\begin{document}
\newcommand {\emptycomment}[1]{} 

\baselineskip=15pt
\newcommand{\nc}{\newcommand}
\newcommand{\delete}[1]{}
\nc{\mfootnote}[1]{\footnote{#1}} 
\nc{\todo}[1]{\tred{To do:} #1}

\nc{\mlabel}[1]{\label{#1}}  
\nc{\mcite}[1]{\cite{#1}}  
\nc{\mref}[1]{\ref{#1}}  
\nc{\meqref}[1]{\eqref{#1}} 
\nc{\mbibitem}[1]{\bibitem{#1}} 

\delete{
\nc{\mlabel}[1]{\label{#1}  
{\hfill \hspace{1cm}{\bf{{\ }\hfill(#1)}}}}
\nc{\mcite}[1]{\cite{#1}{{\bf{{\ }(#1)}}}}  
\nc{\mref}[1]{\ref{#1}{{\bf{{\ }(#1)}}}}  
\nc{\meqref}[1]{\eqref{#1}{{\bf{{\ }(#1)}}}} 
\nc{\mbibitem}[1]{\bibitem[\bf #1]{#1}} 
}

\newcommand {\comment}[1]{{\marginpar{*}\scriptsize\textbf{Comments:} #1}}
\nc{\mrm}[1]{{\rm #1}}
\nc{\id}{\mrm{id}}  \nc{\Id}{\mrm{Id}}

\def\a{\alpha}
\def\b{\beta}
\def\bd{\boxdot}
\def\bbf{\bar{f}}
\def\bF{\bar{F}}
\def\bbF{\bar{\bar{F}}}
\def\bbbf{\bar{\bar{f}}}
\def\bg{\bar{g}}
\def\bG{\bar{G}}
\def\bbG{\bar{\bar{G}}}
\def\bbg{\bar{\bar{g}}}
\def\bT{\bar{T}}
\def\bt{\bar{t}}
\def\bbT{\bar{\bar{T}}}
\def\bbt{\bar{\bar{t}}}
\def\bR{\bar{R}}
\def\br{\bar{r}}
\def\bbR{\bar{\bar{R}}}
\def\bbr{\bar{\bar{r}}}
\def\bu{\bar{u}}
\def\bU{\bar{U}}
\def\bbU{\bar{\bar{U}}}
\def\bbu{\bar{\bar{u}}}
\def\bw{\bar{w}}
\def\bW{\bar{W}}
\def\bbW{\bar{\bar{W}}}
\def\bbw{\bar{\bar{w}}}
\def\btl{\blacktriangleright}
\def\btr{\blacktriangleleft}
\def\ci{\circ}
\def\d{\delta}
\def\dd{\diamondsuit}
\def\D{\Delta}
\def\G{\Gamma}
\def\g{\gamma}
\def\k{\kappa}
\def\l{\lambda}
\def\lr{\longrightarrow}
\def\o{\otimes}
\def\om{\omega}
\def\p{\psi}
\def\r{\rho}
\def\ra{\rightarrow}
\def\rh{\rightharpoonup}
\def\lh{\leftharpoonup}
\def\s{\sigma}
\def\st{\star}
\def\ti{\times}
\def\tl{\triangleright}
\def\tr{\triangleleft}
\def\v{\varepsilon}
\def\vp{\varphi}

\newtheorem{thm}{Theorem}[section]
\newtheorem{lem}[thm]{Lemma}
\newtheorem{cor}[thm]{Corollary}
\newtheorem{pro}[thm]{Proposition}
\theoremstyle{definition}
\newtheorem{defi}[thm]{Definition}
\newtheorem{ex}[thm]{Example}
\newtheorem{rmk}[thm]{Remark}
\newtheorem{pdef}[thm]{Proposition-Definition}
\newtheorem{condition}[thm]{Condition}
\newtheorem{question}[thm]{Question}
\renewcommand{\labelenumi}{{\rm(\alph{enumi})}}
\renewcommand{\theenumi}{\alph{enumi}}

\nc{\ts}[1]{\textcolor{purple}{Tianshui:#1}}
\nc{\jie}[1]{\textcolor{blue}{LIJIE:#1}}
\font\cyr=wncyr10

 \title[]{\bf Rota-Baxter operators on Turaev's Hopf group (co)algebras I: Basic definitions and related algebraic structures}

 \author[Ma]{Tianshui Ma\textsuperscript{*}}

 \address{School of Mathematics and Information Science, Henan Normal University, Xinxiang 453007, China}
         \email{matianshui@htu.edu.cn; matianshui@yahoo.com}

 \author[Li]{Jie Li}
 \address{School of Mathematics and Information Science, Henan Normal University, Xinxiang 453007, China}
         \email{{lijie\_0224@163.com}}

 \author[Chen]{Liangyun Chen}
 \address{School of Mathematics and Statistics, Northeast Normal University, Changchun 130024, China}
         \email{chenly640@nenu.edu.cn}

 \author[Wang]{Shuanhong Wang}
 \address{School of Mathematics, Southeast University, Nanjing 210096, China}
         \email{shuanhwang@seu.edu.cn}

  \thanks{\textsuperscript{*}Corresponding author}

\date{\today}

 \begin{abstract}
 We find a natural compatible condition between the Rota-Baxter operator and Turaev's (Hopf) group-(co)algebras, which leads to the concept of Rota-Baxter Turaev's (Hopf) group-(co)algebra. Two characterizations of Rota-Baxter Turaev's group-algebras (abbr. T-algebras) are obtained: one by Atkinson factorization and the other by T-quasi-idempotent elements. The relations among some related Turaev's group algebraic structures (such as (tri)dendriform T-algebras, Zinbiel T-algebras, pre-Lie T-algebras, Lie T-algebras) are discussed, and some concrete examples from the algebras of dimensions 2,3 and 4 are given. At last we prove that Rota-Baxter Poisson T-algebras can produce pre-Poisson T-algebras and Poisson T-algebras can be obtained from pre-Poisson T-algebras.
 \end{abstract}

\keywords{Rota-Baxter (co,bi)algebras, (tri)dendriform  algebras, Zinbiel algebras, (pre-)Lie algebras, Rota-Baxter Poisson algebras, Rota-Baxter Hopf algebras}

\subjclass[2020]{
17B38,
16T05,
16T10,
17B63 
}

 \maketitle

\tableofcontents

\numberwithin{equation}{section}
\allowdisplaybreaks

 \section{Introduction and preliminaries}\label{se:int} Rota-Baxter operators \cite{Ba60} are named after G. Baxter, which is widely used in many fields, such as pure and applied mathematics, and more recently in mathematical physics. Only in the last two years, many important results about Rota-Baxter algebras have appeared, see for example \cite{A03,Go,GTY1,GTY2,MMS,ZGG1,ZGG,ZGM,ZGZ}. Recall that a  Rota-Baxter algebra of weight $\lambda$ \cite{Guo1} is an algebra $\mathcal{A}$ on some base field $K$ together with a linear map $\mathcal{R} : \mathcal{A} \lr \mathcal{A}$ such that
 \begin{eqnarray}
 &\mathcal{R}(a)\mathcal{R}(b)=\mathcal{R}(a\mathcal{R}(b))+\mathcal{R}(\mathcal{R}(a)b)+\lambda \mathcal{R}(ab),& \label{eq:5.35}
 \end{eqnarray}
 for all $a, b\in A$ and $\lambda\in K$. Such a linear operator is called a Rota-Baxter operator of weight $\lambda$ on $A$. One can refer to Guo's book \cite{Guo1} for the detailed theory of Rota-Baxter algebras.

 The notion of (semi-)Hopf $\pi$ (or group)-(co)algebra was introduced by Turaev (see \cite{Tu,Tu1}) from study in homotopy field theory, which also can be used to develop certain invariants of principal $\pi$-bundles over link complements and over 3-manifolds. Examples of (crossed) $\pi$-categories can be constructed from Turaev's Hopf group-coalgebras (abbr. Hopf T-coalgebras): the category of representations of a (crossed) Hopf T-coalgebra is a (crossed) $\pi$-category. Hopf T-(co)algebras have been studied from an algebraic point of view in the literature, see \cite{MLX,MLZ,Tu,Vi02,W09,Z2,Z3} and more recently \cite{BFVV}. When $\pi$ is abelian, a Hopf T-(co)algebra is a $\pi$-colored Hopf algebra due to Ohtsuki \cite{Oh1}. In the case where the underlying group $\pi$ is trivial, a Hopf T-(co)algebra is exactly a classical Hopf algebra.

 As we know, the relationships between Rota-Baxter algebras and Hopf algebras have attracted the attention of many experts, such as, Connes and Kreimer \cite{CK}, Guo, Thibon and Yu \cite{GTY1,GTY2}, in the fields of mathematics and physics. It is worth mentioning that Eq.(\ref{eq:5.39}) (generalizing Rota-Baxter family operators introduced in \cite{EFGBP}) is a very natural compatible condition between Rota-Baxter operators and (semi-)(Hopf) T-algebras (as we know that T-algebras can been seen as an extension of $\pi$-relative associative algebras introduced in \cite{A03}). This leads to one of the main objects (named Rota-Baxter (Hopf) T-algebra) studied here. That is to say, we obtain a class of bialgebralization for Rota-Baxter T-(co)algebras, or equivalently, provide a class of Rota-Baxterization for Hopf T-(co)algebras, i.e., the following diagram is commutative,
 \vspace{0.5cm}
\begin{center}
\unitlength 0.85mm 
\linethickness{0.4pt}
\ifx\plotpoint\undefined\newsavebox{\plotpoint}\fi 
\hspace{-3.5cm}\begin{picture}(83.5,34)(0,0)
\put(-7,6){\small T-(co)algebras}
\put(69,6.5){\small Rota-Baxter T-algebras.}
\put(-8,31.75){\small Hopf T-(co)algebras}
\put(69,30.75){\small Rota-Baxter Hopf T-(co)algebras}
\put(83,12.5){\vector(0,1){13.5}}
\put(12,12.5){\vector(0,1){13.5}}
\put(31.75,34){\small Rota-Baxterization}
\put(31.75,8.75){\small Rota-Baxterization}
\put(83.5,18.5){\small Bialgebraization}
\put(13,18.75){\small Bialgebraization}
\put(29.5,32){\vector(1,0){36.25}}
\put(29.5,6.75){\vector(1,0){36.25}}
\end{picture}
\end{center}\vskip-0.5cm
 This is the motivation of this paper.

 As a special case, Rota-Baxter T-algebras can cover Rota-Baxter family algebras arising naturally in renormalization of quantum field theory introduced by Ebrahimi-Fard, Gracia-Bondia and Patras in \cite[Proposition 9.1]{EFGBP} and proposed by Guo, see also \cite{Guo}. Other research on Rota-Baxter family algebras can be found in \cite{A03,CLZZ,ZG01,ZGM,ZM03}. Rota-Baxter semi-Hopf T-bialgebras can be seen as an  extension of Rota-Baxter bialgebra introduced in \cite{ML}.

 The layout of the paper is as follows. In Section \ref{se:rbta}, we integrate the Rota-Baxter operator into T-(co)algebra and introduce the notion of Rota-Baxter T-(co)algebra (Definition \ref{de:5.14}). Here the family maps are $\{R_{\varphi}: A_{\varphi}\lr A_{\varphi}\}_{\varphi\in \pi}$. Rota-Baxter T-algebra can cover Rota-Baxter family algebra. Two characterizations of Rota-Baxter T-algebras are given. One is a generalization of the Atkinson factorization (Proposition \ref{pro:15.26}). The other is related to the quasi-idempotency (Proposition \ref{pro:15.24}). We provide two approaches to construct Rota-Baxter algebras. We prove that Rota-Baxter T-algebras of weight $\l$ can be obtained by any Rota-Baxter algebras of weight $\l$ (Theorem \ref{thm:15.3}) and more generally, by any Rota-Baxter pair of weight $\l$ (Theorem \ref{thm:15.4}). The relations among (tri)dendriform T-algebras, Rota-Baxter T-algebras and T-algebras are discussed, intuitively, the following diagram is commutative.\vskip-5mm
$$ \xymatrix@C4.5em{
&& \text{tridendriform  }\atop \text{T-algebra}
\ar@2{->}^{\text{Proposition}\ \ref{pro:5.5}}[r] \ar@2{<-}_{\text{Proposition}\  \ref{pro:5.4}}[d]&
\text{ }\atop \text{T-algebras}
\ar@2{<-}_{\text{Proposition}\ \ref{pro:5.5}}[d]&\\
&&\text{Rota-Baxter}\atop \text{T-algebra}\ar@2{->}^{\text{Proposition}\ \ref{pro:5.4}}[r]&\text{dendriform}\atop \text{T-algebra.}
}\qquad\qquad
$$
 As applications, at the same time, we list some concrete examples from the algebras of dimensions 2,3 and 4 (Example \ref{ex:15.12}). Based on Proposition \ref{pro:5.5}, commutative dendriform T-algebra induces commutative T-algebra, but Example \ref{ex:15.19a} shows that noncommutative dendriform T-algebra may induce commutative T-algebra. In Section \ref{se:rbhta}, we present a class of bialgebralization for Rota-Baxter T-(co)algebras, and then introduce the concepts of Rota-Baxter Hopf T-(co)algebras (Definition \ref{de:5.16}) which can be seen as an extension of Rota-Baxter bialgebra (introduced in \cite{ML}) whose examples can be provided by the well-known Radford biproduct playing a central role in the classification of finite dimensional pointed Hopf algebras \cite{AS}. And simultaneously some concrete examples are given (Example \ref{ex:15.11}).

 In Section \ref{se:pi}, we also derive the T-versions of other common algebraic structures related to Rota-Baxter T-algebras and the relations among them, given by the following commutative diagram.
$$ \xymatrix@C4.5em{
&&
&\text{Zinbiel}\atop \text{T-algebra}
\ar@2{<-}_{\text{Proposition}\ \ref{pro:6.7}(+commutative)}[d]& \text{Rota-Baxter Lie}\atop \text{T-algebra}
\ar@2{->}_{\text{Proposition}\ \ref{pro:6.11}}[d]\\
&&
&\text{dendriform}\atop \text{T-algebra}\ar@2{->}^{\text{Proposition}\ \ref{pro:6.2}}[r]&\text{pre-Lie}\atop \text{T-algebra}\\
&& &\ar@2{->}^{\text{Proposition}\  \ref{pro:6.13}}[u] \text{Zinbiel }\atop \text{T-algebra} &\ar@2{<-}^{\text{Proposition}\ \ref{pro:6.15}}[u]\text{Lie}\atop \text{T-algebra} &
}\qquad\qquad\qquad\qquad
$$
 In the meantime we also construct some concrete examples, for example, Example \ref{ex:15.20a} implies that noncommutative dendriform T-algebra may lead to commutative pre-Lie T-algebra, while according to Proposition \ref{pro:6.2}, commutative dendriform T-algebra can lead to commutative (i.e., $a\ast_{p,q} b=b\ast_{q,p} a)$ pre-Lie T-algebra. In Section \ref{se:future}, we illustrate some future work about Rota-Baxter Hopf T-(co)algebras. 
 
 Throughout this paper, $K$ will be a field, and all vector spaces, tensor products, and
 homomorphisms are over $K$.

 \section{Rota-Baxter T-algebras}\label{se:rbta} In this section, we will relate Rota-Baxter operator to T-(co)algebra introduced by Turaev (see \cite{Tu,Tu1}). Moreover Rota-Baxter T-algebra can cover the Rota-Baxter family algebra introduced in \cite[Proposition 9.1]{EFGBP} and proposed by Guo. As applications, we give some concrete examples from the algebras of dimensions 2,3 and 4.

 \subsection{Definition and properties}
 \begin{defi}\label{de:5.14} Let $\pi$ be a semigroup and $\lambda\in K$. A {\bf T-algebra} \cite{Tu} is a pair $(\{A_{\varphi}\}_{\varphi\in \pi}, \{\mu_{p,q}\}_{p, q\in \pi})$, where $\{A_{\varphi}\}_{\varphi\in \pi}$ is a family of vector spaces together with a family of linear maps $\{\mu_{p,q}: A_{p}\otimes A_{q}\lr A_{pq}\}_{p,q\in \pi}$ (write $\mu_{p,q}(a\otimes b)=a\cdot_{p,q}b$) such that
 \begin{eqnarray}
 &\mu_{pq,t}\circ(\mu_{p,q}\otimes \id_{A_{t}})=\mu_{p,qt} \circ(\id_{A_{p}}\otimes \mu_{q,t}),&\label{eq:5.37}
 \end{eqnarray}
 for all $p,q,t\in \pi$.
 A {\bf Rota-Baxter T-algebra of weight $\lambda$} is a quadruple $(\{A_{\varphi}\}_{\varphi\in \pi}, \{\mu_{p,q}\}_{p, q\in \pi}, \{R_{\varphi}\}_{\varphi\in \pi}$, $\lambda)$ (abbr. $(A, R)$), where $(\{A_{\varphi}\}_{\varphi\in \pi}, \{\mu_{p,q}\}_{p, q\in \pi})$ is a T-algebra and $\{R_{\varphi}: A_{\varphi}\lr A_{\varphi}\}_{\varphi\in \pi}$ is a family of linear maps such that
 \begin{eqnarray}
 &\mu_{p,q}\ci(R_p\o R_q)=R_{pq}\ci\mu_{p,q}\ci(R_p\o \id_q+\id_p\o R_q+\l \id_p\o \id_q),&\label{eq:5.39}
 \end{eqnarray}
 for all $p,q\in \pi$.

 If furthermore $\pi$ is a monoid (i.e., a semigroup with a unit 1) and there is a linear map $\eta: K\lr A_{1}$ such that
 \begin{eqnarray}
 &\mu_{p,1}\circ (\id_{A_{p}}\otimes\eta)=\id_{A_{p}}=\mu_{1,p}\circ (\eta\otimes \id_{A_{p}}),&\label{eq:5.38}
 \end{eqnarray}
 then we call Rota-Baxter T-algebra $(A, R)$ {\bf unital}.
 \end{defi}

 \begin{rmk}\label{rmk:5.15}

 (1) If the semigroup $\pi$ contains single element $e$, then the Rota-Baxter T-algebra is exactly the Rota-Baxter algebra of weight $\l$ in \cite{Ro1}.

 (2) If $(\mathcal{A},\mu,\eta)$ is an associative algebra, then Rota-Baxter T-algebra $(\{A_{\varphi}=\mathcal{A}\}, \{\mu_{p, q}=\mu\}, \{R_{\varphi}\}, \lambda)$ is the Rota-Baxter family algebra of weight $\l$ introduced in \cite[Proposition 9.1]{EFGBP}, proposed by Guo and studied in \cite{CLZZ,Guo,ZG01}. 

 (3) If we set $\{A_{\varphi}=\mathcal{A}\}_{\varphi\in \pi}$, then Eq.(\ref{eq:5.37}) is exactly the \cite[Eq.(2.1)]{A03}, i.e., the condition for $\pi$-relative.
 \end{rmk}

 Rota-Baxter T-algebras can be constructed by Rota-Baxter T-algebras in the following way.

 \begin{lem}\label{lem:15.25} Let $(A,R)$ be a Rota-Baxter T-algebra of weight $\lambda$. Define
 \begin{eqnarray}
  &\tilde{R}:=\{\tilde{R}_{\varphi}=-\lambda id_{A_{\varphi}} -R_{\varphi}\}.&\label{eq:16.00}
 \end{eqnarray}
  Then $(A, \tilde{R})$ is also a Rota-Baxter T-algebra of weight $\lambda$.
 \end{lem}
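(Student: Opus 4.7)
The plan is a direct verification by substitution: I want to show that the family $\{\tilde{R}_\varphi = -\lambda \, \mathrm{id}_{A_\varphi} - R_\varphi\}$ satisfies the Rota-Baxter identity \eqref{eq:5.39} on every pair $(p,q)$, assuming the same holds for $\{R_\varphi\}$. Since the T-algebra data $(\{A_\varphi\}, \{\mu_{p,q}\})$ is unchanged, the only thing to check is the compatibility condition, which is linear in each of the three ``$R$-slots'' and thus amenable to a brute-force expansion. The proof will be entirely pointwise: pick $a \in A_p$, $b \in A_q$, write $a \cdot b$ for $\mu_{p,q}(a\o b)$, and expand both sides.

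For the left-hand side, I will compute
\[
\tilde{R}_p(a)\cdot \tilde{R}_q(b) = (-\lambda a - R_p(a))\cdot(-\lambda b - R_q(b)),
\]
expand bilinearly into four terms, and then rewrite the cross term $R_p(a)\cdot R_q(b)$ using the Rota-Baxter identity \eqref{eq:5.39} for $R$. For the right-hand side, I will first simplify the argument
\[
\tilde{R}_p(a)\cdot b + a\cdot \tilde{R}_q(b) + \lambda\, a\cdot b,
\]
which collapses to $-(R_p(a)\cdot b + a\cdot R_q(b) + \lambda\, a\cdot b)$ after the $\lambda$-terms cancel. Applying $\tilde{R}_{pq} = -\lambda\,\mathrm{id} - R_{pq}$ to this, the $R_{pq}$-part reproduces, via \eqref{eq:5.39}, exactly $R_p(a)\cdot R_q(b)$, while the $-\lambda\,\mathrm{id}$-part produces the three linear-in-$\lambda$ summands plus a $\lambda^2 a\cdot b$ term.

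Comparing the two sides then just amounts to matching four terms: $\lambda^2 a\cdot b$, $\lambda\, R_p(a)\cdot b$, $\lambda\, a\cdot R_q(b)$, and $R_p(a)\cdot R_q(b)$. The computation is completely symmetric, so no case analysis on $p, q$ is required, and the result holds uniformly in $p,q \in \pi$.

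There is no genuine obstacle here; the result is essentially the well-known ``$-\lambda - R$ is again Rota-Baxter'' statement transported into the T-algebra setting, where the indices $p, q, pq$ on the operators do not interfere because the identity \eqref{eq:5.39} is already written with these very indices. The only point to be careful about is bookkeeping: one must use \eqref{eq:5.39} in \emph{both} directions (expanding $R_p(a)\cdot R_q(b)$ on the LHS and recombining three $R_{pq}$-terms on the RHS), and keep track of signs so that the $\lambda^2$-term and the two $\lambda$-cross-terms coming from $-\lambda\,\mathrm{id}$ on the RHS land on their counterparts on the LHS.
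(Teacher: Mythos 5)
Your proposal is correct and follows essentially the same route as the paper's proof: expand $\tilde{R}_p(a)\cdot_{p,q}\tilde{R}_q(b)$ bilinearly into four terms and identify the result with $(\lambda\,\mathrm{id}_{A_{pq}}+R_{pq})$ applied to $R_p(a)\cdot_{p,q}b+a\cdot_{p,q}R_q(b)+\lambda\,a\cdot_{p,q}b$, which the two sign cancellations convert into $\tilde{R}_{pq}$ applied to the corresponding $\tilde{R}$-argument. No gaps; the bookkeeping you describe is exactly the paper's computation.
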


 \begin{proof} For any $a\in A_{p}$, $b\in A_{q}$ and $p,q\in \pi$
 \begin{eqnarray*}
 \tilde{R}_{p}(a)\cdot_{p,q}\tilde{R}_{q}(b)&=&(-\lambda a-R_{p}(a))\cdot_{p,q}(-\lambda b-R_{q}(b))\\
 &=&\lambda^{2} a\cdot_{p,q}b+\lambda R_{p}(a)\cdot_{p,q}b+\lambda a\cdot_{p,q}R_{q}(b)+R_{p}(a)\cdot_{p,q}R_{q}(b)\\
 &=&(\lambda id_{A_{pq}} +R_{pq})(R_{p}(a)\cdot_{p,q}b+a\cdot_{p,q}R_{q}(b)+\lambda a\cdot_{p,q}b)\\
 &=&\tilde{R}_{pq}(\tilde{R}_{p}(a)\cdot_{p,q}b+a\cdot_{p,q}\tilde{R}_{q}(b)+\lambda a\cdot_{p,q}b).
 \end{eqnarray*}
 This is what we need.
 \end{proof}

 In what follows, we provide two characterizations of Rota-Baxter T-algebras. First we consider the Atkinson factorization \cite{At} for Rota-Baxter T-algebras.

 \begin{pro}\label{pro:15.26} Let $\pi$ be a semigroup, $A$ be a T-algebra and $R=\{R_{\varphi}: A_{\varphi}\lr A_{\varphi}\}$ be a family of linear maps. Assume that $\lambda\in K$ have no zero divisor in $\{A_{\varphi}\}_{\varphi\in \pi}$ (which means that if for $\l\neq 0$ and $c\in A_{\vp}$ such that $\l c=0$, then $c=0$) and define $\tilde{R}$ as in Eq.(\ref{eq:16.00}). Then $(A,R)$ is a Rota-Baxter T-algebra of weight $\lambda\neq 0$ if and only if, for any $a\in A_{p}$, $b\in A_{q}$ and $p,q\in \pi$, there is an element $c\in A_{pq}$ such that
 \begin{eqnarray}
  &R_{p}(a)\cdot_{p,q}R_{q}(b)=R_{pq}(c),\quad \tilde{R}_{p}(a)\cdot_{p,q}\tilde{R}_{q}(b)=-\tilde{R}_{pq}(c).&\label{eq:16.01}
 \end{eqnarray}
 \end{pro}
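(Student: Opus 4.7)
The plan is to split the equivalence into its two directions, the nontrivial content lying in the converse and turning on the hypothesis that $\l\neq 0$ is not a zero divisor.

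For the forward direction, assume $(A,R)$ is a Rota-Baxter T-algebra of weight $\l$. Fix $a\in A_p$, $b\in A_q$, and take as witness
$$c := R_p(a)\cdot_{p,q} b + a\cdot_{p,q} R_q(b) + \l\, a\cdot_{p,q} b \in A_{pq}.$$
The first identity of \meqref{eq:16.01} is then exactly \meqref{eq:5.39}. For the second, I would invoke Lemma~\mref{lem:15.25}, which guarantees that $(A,\tilde R)$ is again a Rota-Baxter T-algebra of weight $\l$. Applied to the same pair $(a,b)$ it yields
$$\tilde R_p(a)\cdot_{p,q}\tilde R_q(b) = \tilde R_{pq}\bigl(\tilde R_p(a)\cdot_{p,q} b + a\cdot_{p,q}\tilde R_q(b) + \l\, a\cdot_{p,q} b\bigr).$$
A short expansion of $\tilde R_\vp = -\l\,\id_{A_\vp} - R_\vp$ reduces the argument of $\tilde R_{pq}$ on the right to $-c$, and by linearity this becomes $-\tilde R_{pq}(c)$, as required.

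For the converse, assume \meqref{eq:16.01} and expand the left-hand side of its second identity:
$$\tilde R_p(a)\cdot_{p,q}\tilde R_q(b) = \l^2\, a\cdot_{p,q} b + \l\bigl(R_p(a)\cdot_{p,q} b + a\cdot_{p,q} R_q(b)\bigr) + R_p(a)\cdot_{p,q} R_q(b).$$
Substituting $R_p(a)\cdot_{p,q} R_q(b) = R_{pq}(c)$ from the first identity and expanding the right-hand side $-\tilde R_{pq}(c) = \l c + R_{pq}(c)$, the copies of $R_{pq}(c)$ cancel and I am left with
$$\l c = \l^2\, a\cdot_{p,q} b + \l\bigl(R_p(a)\cdot_{p,q} b + a\cdot_{p,q} R_q(b)\bigr).$$
This is precisely where the standing hypothesis on $\l$ enters: because $\l\neq 0$ is not a zero divisor on $A_{pq}$, I may cancel $\l$ to conclude $c = R_p(a)\cdot_{p,q} b + a\cdot_{p,q} R_q(b) + \l\, a\cdot_{p,q} b$. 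Feeding this identification of $c$ back into $R_p(a)\cdot_{p,q} R_q(b) = R_{pq}(c)$ recovers \meqref{eq:5.39}.

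The only genuinely delicate point is the no-zero-divisor hypothesis on $\l$; without it one obtains merely $\l\bigl(c - R_p(a)\cdot_{p,q} b - a\cdot_{p,q} R_q(b) - \l\, a\cdot_{p,q} b\bigr) = 0$ in $A_{pq}$, which is insufficient to recover the Rota-Baxter identity. Everything else is a routine unpacking of definitions together with a single appeal to Lemma~\mref{lem:15.25}, so I expect the full write-up to be quite brief.
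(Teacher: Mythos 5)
Your proposal is correct and follows essentially the same route as the paper: the same witness $c = R_p(a)\cdot_{p,q}b + a\cdot_{p,q}R_q(b) + \lambda\, a\cdot_{p,q}b$ in the forward direction (with the second identity deduced from Lemma \ref{lem:15.25}), and in the converse the same expansion of $\tilde{R}_p(a)\cdot_{p,q}\tilde{R}_q(b)$ leading to $\lambda c = \lambda\bigl(R_p(a)\cdot_{p,q}b + a\cdot_{p,q}R_q(b) + \lambda\, a\cdot_{p,q}b\bigr)$, cancelled via the no-zero-divisor hypothesis. The paper phrases the cancellation by first noting $R_{pq}(c)+\tilde{R}_{pq}(c)=-\lambda c$, but this is only a cosmetic difference from your bookkeeping.
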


 \begin{proof} ($\Longrightarrow$) Let $(A,R)$ be a Rota-Baxter T-algebra of weight $\lambda\neq 0$. Then, for any $a\in A_{p}$, $b\in A_{q}$ and $p,q\in \pi$, we have
 \begin{eqnarray*}
 R_{p}(a)\cdot_{p,q}R_{q}(b)=R_{pq} \big(R_{p}(a)\cdot_{p,q}b+a\cdot_{p,q}R_{q}(b)+\lambda a\cdot_{p,q}b \big).
 \end{eqnarray*}
 Let $c= R_{p}(a)\cdot_{p,q}b+a\cdot_{p,q}R_{q}(b)+\lambda a\cdot_{p,q}b $. Then the above equality gives
 \begin{eqnarray*}
 R_{p}(a)\cdot_{p,q}R_{q}(b)=R_{pq}(c).
 \end{eqnarray*}
 Based on the proof of Lemma \ref{lem:15.25}, we have
 \begin{eqnarray*}
 \tilde{R}_{p}(a)\cdot_{p,q}\tilde{R}_{q}(b)=-\tilde{R}_{pq}(c).
 \end{eqnarray*}

 ($\Longleftarrow$) For any $a\in A_{p}$, $b\in A_{q}$ and $p,q\in \pi$. Suppose that there exists an element $c\in A_{pq}$ such that $R_{p}(a)\cdot_{p,q}R_{q}(b)=R_{pq}(c),\  \tilde{R}_{p}(a)\cdot_{p,q}\tilde{R}_{q}(b)=-\tilde{R}_{pq}(c)$. Then we have
 \begin{eqnarray*}
 -\lambda c &=& R_{pq}(c)+\tilde{R}_{pq}(c)\\
 &=&R_{p}(a)\cdot_{p,q}R_{q}(b)-\tilde{R}_{p}(a)\cdot_{p,q}\tilde{R}_{q}(b)\\
 &=&R_{p}(a)\cdot_{p,q}R_{q}(b)-\lambda(R_{p}(a)\cdot_{p,q}b+a\cdot_{p,q}R_{q}(b)+\lambda a\cdot_{p,q}b)-R_{p}(a)\cdot_{p,q}R_{q}(b)\\
 &=&-\lambda(R_{p}(a)\cdot_{p,q}b+a\cdot_{p,q}R_{q}(b)+\lambda a\cdot_{p,q}b).
 \end{eqnarray*}
 So we get
 \begin{eqnarray*}
 c &=& R_{p}(a)\cdot_{p,q}b+a\cdot_{p,q}R_{q}(b)+\lambda a\cdot_{p,q}b.
 \end{eqnarray*}
 Then
 \begin{eqnarray*}
 R_{p}(a)\cdot_{p,q}R_{q}(b)=R_{pq}(R_{p}(a)\cdot_{p,q}b+a\cdot_{p,q}R_{q}(b)+\lambda a\cdot_{p,q}b),
 \end{eqnarray*}
 as desired.
 \end{proof}

 Now we give the two useful definitions.
 \begin{defi}\label{de:15.23} A family of linear maps $R=\{R_{\varphi}: A_{\varphi}\longrightarrow A_{\varphi}\}_{\varphi\in \pi}$ is called {\bf T-idempotent} if $R_{\varphi}^{2}=R_{\varphi},~ \forall~\varphi\in \pi$. A family of linear maps $R=\{R_{\varphi}: A_{\varphi}\longrightarrow A_{\varphi}\}_{\varphi\in \pi}$ is called {\bf T-quasi-idempotent of weight $\lambda$} if $R_{\varphi}^{2}=-\lambda R_{\varphi}, \forall~\varphi\in \pi$.
 \end{defi}

 Then we can obtain an equivalent description of Rota-Baxter T-algebras by T-quasi-idempotency.
 \begin{pro}\label{pro:15.24} Let $\pi$ be a monoid, $A$ be a unital T-algebra and $R=\{R_{\varphi}: A_{\varphi}\longrightarrow A_{\varphi}\}_{\varphi\in\pi}$ be left $A$-linear in the sense that $R_{pq}(a\cdot_{p,q} b)=a\cdot_{p,q} R_{q}(b), \forall~a\in A_{p}, b\in A_{q}, p,q\in \pi$. Then $(A,R)$ is a Rota-Baxter T-algebra of weight $\lambda$ if and only if $R$ is T-quasi-idempotent of weight $\lambda$.
 \end{pro}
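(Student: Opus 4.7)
The plan is to exploit the left $A$-linearity hypothesis to absorb each occurrence of $R_{pq}$ applied to a product into a single operator on the right-hand tensor factor, which collapses the Rota-Baxter identity to a statement about $R_q^2+\lambda R_q$.

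For the forward direction, I would fix $p,q\in\pi$, $a\in A_p$, $b\in A_q$ and expand~\meqref{eq:5.39} as
\[
R_p(a)\cdot_{p,q}R_q(b)=R_{pq}\bigl(R_p(a)\cdot_{p,q} b\bigr)+R_{pq}\bigl(a\cdot_{p,q} R_q(b)\bigr)+\lambda R_{pq}\bigl(a\cdot_{p,q} b\bigr).
\]
The left $A$-linearity hypothesis rewrites the three terms on the right as $R_p(a)\cdot_{p,q}R_q(b)$, $a\cdot_{p,q}R_q^2(b)$, and $\lambda\, a\cdot_{p,q}R_q(b)$. The first cancels the left-hand side, leaving
\[
0=a\cdot_{p,q}\bigl(R_q^2(b)+\lambda R_q(b)\bigr).
\]
Specializing $p=1$ and $a=\eta(1_K)$ and invoking the unit axiom~\meqref{eq:5.38} then yields $R_q^2(b)+\lambda R_q(b)=0$ for every $b\in A_q$, i.e., T-quasi-idempotency.

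For the converse, assume $R_\varphi^2=-\lambda R_\varphi$ for every $\varphi\in\pi$. Then left $A$-linearity applied term-by-term gives
\[
R_{pq}\bigl(R_p(a)\cdot_{p,q} b+a\cdot_{p,q} R_q(b)+\lambda\, a\cdot_{p,q} b\bigr)=R_p(a)\cdot_{p,q} R_q(b)+a\cdot_{p,q} R_q^2(b)+\lambda\, a\cdot_{p,q} R_q(b),
\]
and quasi-idempotency makes the last two terms cancel: $a\cdot_{p,q} R_q^2(b)=-\lambda\, a\cdot_{p,q} R_q(b)$. What survives is precisely $R_p(a)\cdot_{p,q} R_q(b)$, which is~\meqref{eq:5.39}. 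The computation is essentially mechanical; the only step that genuinely uses structure beyond left $A$-linearity is the appeal to the unit of $\pi$ together with $\eta$ to strip the outer factor $a$ and isolate the operator identity $R_q^2+\lambda R_q=0$. Without unitality one could only conclude that $R_q^2(b)+\lambda R_q(b)$ is annihilated on the left by every $\mu_{p,q}(a\otimes -)$, so this is the mildly delicate point of the argument.
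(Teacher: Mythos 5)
Your proof is correct and follows essentially the same route as the paper: both arguments use left $A$-linearity to rewrite the right-hand side of~\meqref{eq:5.39} as $R_p(a)\cdot_{p,q}R_q(b)+a\cdot_{p,q}R_q^2(b)+\lambda\,a\cdot_{p,q}R_q(b)$ and reduce the Rota-Baxter identity to $R_q^2+\lambda R_q=0$. The only difference is that you make explicit the use of unitality (taking $p=1$, $a=\eta(1_K)$) to pass from $a\cdot_{p,q}(R_q^2(b)+\lambda R_q(b))=0$ to the operator identity, a step the paper's proof leaves implicit.
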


 \begin{proof} Under the condition of left $A$-linearity of $R$, for $a\in A_{p}$, $b\in A_{q}$ and $p,q\in \pi$, we can get
 \begin{eqnarray*}
 &&R_{p}(a)\cdot_{p,q}R_{q}(b)=R_{pq}(R_{p}(a)\cdot_{p,q}b+a\cdot_{p,q}R_{q}(b)+\lambda a\cdot_{p,q}b)\\
 &&\Leftrightarrow a\cdot_{p,q}R_{q}^{2}(b)+\lambda a\cdot_{p,q}R_{q}(b)=0 \Leftrightarrow R_{q}^{2} +\lambda  R_{q} =0,
 \end{eqnarray*}
 finishing the proof.
 \end{proof}

 \begin{pro}\label{pro:15.27} Let $(A,R)$ be a Rota-Baxter T-algebra of weight $\lambda$. If $R$ is T-idempotent, then
 \begin{eqnarray}
  &(1+\lambda)R_{pq}(a\cdot_{p,q}R_{q}(b))=0, \quad (1+\lambda)R_{pq}(R_{p}(a)\cdot_{p,q}b)=0,&\label{eq:16.02}\\
  &(1+\lambda)(R_{p}(a)\cdot_{p,q}R_{q}(b)-\lambda R_{pq}(a\cdot_{p,q}b))=0,&\label{eq:16.04}
 \end{eqnarray}
 for all $a\in A_{p}$, $b\in A_{q}$ and $p,q\in \pi$.
 \end{pro}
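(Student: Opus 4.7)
The plan is to derive the three identities by combining the Rota-Baxter axiom \meqref{eq:5.39} with T-idempotency ($R_\varphi^2=R_\varphi$) through two simple moves: (i) substituting $a$ by $R_p(a)$ or $b$ by $R_q(b)$ in \meqref{eq:5.39}, and (ii) applying $R_{pq}$ to both sides of \meqref{eq:5.39}. These two moves, together with a subtraction, will produce all three equalities.

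First I would establish a handy auxiliary identity: applying $R_{pq}$ to \meqref{eq:5.39} and using $R_{pq}^2=R_{pq}$, we immediately obtain
\begin{eqnarray*}
R_{pq}(R_p(a)\cdot_{p,q}R_q(b)) = R_p(a)\cdot_{p,q}R_q(b).
\end{eqnarray*}
Next, to get \meqref{eq:16.02} (left half), I would substitute $b$ by $R_q(b)$ in \meqref{eq:5.39}; since $R_q^2=R_q$, the left-hand side stays as $R_p(a)\cdot_{p,q}R_q(b)$, while the right-hand side becomes
\begin{eqnarray*}
R_{pq}(R_p(a)\cdot_{p,q}R_q(b)) + (1+\lambda)R_{pq}(a\cdot_{p,q}R_q(b)).
\end{eqnarray*}
Using the auxiliary identity to cancel the first term on the right yields $(1+\lambda)R_{pq}(a\cdot_{p,q}R_q(b))=0$. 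The second half of \meqref{eq:16.02} is obtained by the symmetric substitution $a\mapsto R_p(a)$ in \meqref{eq:5.39}, following the exact same cancellation.

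For \meqref{eq:16.04}, I would multiply \meqref{eq:5.39} through by $(1+\lambda)$ and apply the two halves of \meqref{eq:16.02} already proved to annihilate the terms $(1+\lambda)R_{pq}(R_p(a)\cdot_{p,q}b)$ and $(1+\lambda)R_{pq}(a\cdot_{p,q}R_q(b))$ on the right. What survives is exactly $(1+\lambda)\bigl(R_p(a)\cdot_{p,q}R_q(b)-\lambda R_{pq}(a\cdot_{p,q}b)\bigr)=0$.

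I do not expect a serious obstacle: the proof is purely formal manipulation of \meqref{eq:5.39} under the idempotency hypothesis, with no use of the semigroup structure of $\pi$ beyond indexing. The only mild subtlety is to make sure the substitutions $a\mapsto R_p(a)$ and $b\mapsto R_q(b)$ are legal, which they are since $R_p$ (resp.\ $R_q$) maps $A_p$ to $A_p$ (resp.\ $A_q$ to $A_q$), so the substituted equation still lives in $A_{pq}$ and \meqref{eq:5.39} applies unchanged.
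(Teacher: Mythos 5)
Your proposal is correct and follows essentially the same route as the paper: substituting $b\mapsto R_q(b)$ (resp.\ $a\mapsto R_p(a)$) into \meqref{eq:5.39}, exploiting $R_\varphi^2=R_\varphi$ to cancel the term $R_{pq}(R_p(a)\cdot_{p,q}R_q(b))$, and then deducing \meqref{eq:16.04} from the two halves of \meqref{eq:16.02}. The only cosmetic difference is that you isolate the auxiliary identity $R_{pq}(R_p(a)\cdot_{p,q}R_q(b))=R_p(a)\cdot_{p,q}R_q(b)$ up front, whereas the paper obtains the same cancellation by applying $R_{pq}$ to the whole substituted equation.
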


 \begin{proof} For any $a\in A_{p}$, $b\in A_{q}$ and $p,q\in \pi$,
 \begin{eqnarray*}
 R_{p}(a)\cdot_{p,q}R_{q}(b) &=& R_{p}(a)\cdot_{p,q}R_{q}^{2}(b)=R_{p}(a)\cdot_{p,q}R_{q}(R_{q}(b))\\
 &=&R_{pq}(R_{p}(a)\cdot_{p,q}R_{q}(b)+a\cdot_{p,q}R_{q}^{2}(b)+\lambda a\cdot_{p,q}R_{q}(b))\\
 &=&R_{pq}(R_{p}(a)\cdot_{p,q}R_{q}(b))+R_{pq}(a\cdot_{p,q}R_{q}(b))+\lambda R_{pq} (a\cdot_{p,q}R_{q}(b)).
 \end{eqnarray*}
 Applying $R_{pq}$ to the above equality and using $R_{pq}^{2}=R_{pq}$, we obtain
 \begin{eqnarray*}
 (1+\lambda)R_{pq}(a\cdot_{p,q}R_{q}(b))=0.
 \end{eqnarray*}
 Similarly,
\begin{eqnarray*}
 R_{p}(a)\cdot_{p,q}R_{q}(b)
 =R_{pq}(R_{p}(a)\cdot_{p,q}b)+R_{pq}(R_{p}(a)\cdot_{p,q}R_{q}(b))+\lambda R_{pq} (R_{p}(a)\cdot_{p,q}b).
 \end{eqnarray*}
 Applying $R_{pq}$ again and using $R_{pq}^{2}=R_{pq}$, one has
 \begin{eqnarray*}
 (1+\lambda)R_{pq}(R_{p}(a)\cdot_{p,q}b)=0.
 \end{eqnarray*}
 According to the above two equations, Eq.(\ref{eq:16.04}) is obvious.
 \end{proof}

 \subsection{Constructions of Rota-Baxter T-algebras}\label{se:crbta} In this subsection,  we give some concrete examples from the algebras of dimensions 2,3 and 4.

 Let $\pi$ be a nonempty set and $\mathcal{A}$ an algebra. We denote $\mathcal{A}[\pi]=\mathcal{A}\otimes K\pi, \mathcal{A}\varphi=\mathcal{A}\otimes K\varphi, \forall\varphi\in\pi$. If, further, $\pi$ is a semigroup and $\mathcal{A}$ is an algebra, then $\mathcal{A}[\pi]$ is a T-algebra with the tensor product algebra structure, i.e.,
 \begin{eqnarray*}
 &\mu_{p,q}:\mathcal{A} p\otimes \mathcal{A} q\longrightarrow \mathcal{A}(pq)&\\
 &\qquad\quad hp\otimes gq\longmapsto (hg)(pq)&
 \end{eqnarray*}
 for all $h,g\in \mathcal{A}$ and $p,q\in \pi$ (\cite{Tu,Tu1}).

 The following result is an analogy of \cite[Proposition 9.2]{EFGBP}.

 \begin{thm}\label{thm:15.3}  Let $\pi$ be a semigroup, $\mathcal{A}$ an algebra and $\mathcal{R}: \mathcal{A}\longrightarrow \mathcal{A}$ a linear map. For any $\varphi \in\pi$, define
 \begin{eqnarray*}
 &&R_{\varphi}:\mathcal{A}\varphi \longrightarrow \mathcal{A}\varphi\\
 &&\qquad h\varphi \longmapsto \mathcal{R}(h)\varphi
 \end{eqnarray*}
 for all $h\in \mathcal{A}$. Then $(\mathcal{A},\mathcal{R})$ is a Rota-Baxter algebra of weight $\lambda$ if and only if $(\mathcal{A}[\pi],R=\{R_{\varphi}\})$ is a Rota-Baxter T-algebra of weight $\lambda$.
 \end{thm}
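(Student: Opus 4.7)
The proof will be essentially a direct unpacking of definitions. The plan is to fix arbitrary $p, q \in \pi$ and $h, g \in \mathcal{A}$, evaluate both sides of Eq.~\eqref{eq:5.39} on the simple tensors $hp \in \mathcal{A}p$ and $gq \in \mathcal{A}q$, and show that the resulting identity in $\mathcal{A}(pq)$ is equivalent to the classical Rota-Baxter identity \eqref{eq:5.35} for $\mathcal{R}$ on $\mathcal{A}$. Since both sides are bilinear in $h$ and $g$, checking on simple tensors suffices.

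Concretely, using the tensor-product multiplication $(hp) \cdot_{p,q} (gq) = (hg)(pq)$ and the definition $R_\varphi(h\varphi) = \mathcal{R}(h)\varphi$, the left-hand side of \eqref{eq:5.39} applied to $hp \otimes gq$ evaluates to $\mathcal{R}(h)\mathcal{R}(g)\cdot(pq)$. Similarly, the right-hand side evaluates to $\bigl(\mathcal{R}(\mathcal{R}(h)g) + \mathcal{R}(h\mathcal{R}(g)) + \lambda\,\mathcal{R}(hg)\bigr)\cdot(pq)$. Because $\mathcal{A}(pq) = \mathcal{A}\otimes K(pq)$ with $K(pq)$ one-dimensional, equality of these two elements of $\mathcal{A}(pq)$ is equivalent to the scalar identity $\mathcal{R}(h)\mathcal{R}(g) = \mathcal{R}(\mathcal{R}(h)g) + \mathcal{R}(h\mathcal{R}(g)) + \lambda\,\mathcal{R}(hg)$ in $\mathcal{A}$.

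From here the two directions fall out immediately. For $(\Longrightarrow)$ one substitutes the Rota-Baxter identity for $\mathcal{R}$ into the reformulated condition, which yields \eqref{eq:5.39} at every pair $(p,q)$. For $(\Longleftarrow)$ one picks any single element $p \in \pi$ (nonempty since $\pi$ is a semigroup) and reads off \eqref{eq:5.35} for arbitrary $h, g \in \mathcal{A}$ from the identity at the pair $(p,p)$ in $\mathcal{A}(pp)$. I do not anticipate any substantive obstacle: the family $\{R_\varphi\}$ was constructed precisely so that $\mathcal{R}$ acts independently of the semigroup label $\varphi$, so the $\pi$-structure is inert and the computation reduces to the single-element case already recorded in Remark~\ref{rmk:5.15}(1). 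The only care required is notational bookkeeping with the indices $p, q, pq$ and the canonical identification $\mathcal{A}\varphi \cong \mathcal{A}$.
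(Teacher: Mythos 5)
Your proposal is correct and follows essentially the same route as the paper: evaluate both sides of Eq.~(\ref{eq:5.39}) on simple tensors $hp$ and $gq$, observe that the $\pi$-labels pass through inertly, and reduce the identity in $\mathcal{A}(pq)$ to the classical Rota-Baxter identity (\ref{eq:5.35}) for $\mathcal{R}$. Your explicit remark on the converse (reading off (\ref{eq:5.35}) from any single pair $(p,q)$, using that $K(pq)$ is one-dimensional) is a small point the paper leaves implicit, but it is the same argument.
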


 \begin{proof} For all $h,g\in \mathcal{A}$ and $p,q\in \pi$,
 \begin{eqnarray*}
 &&\hspace{-13mm}R_{pq}(R_{p}(hp)\cdot_{p,q}gq+hp\cdot_{p,q}R_{q}(gq)+\lambda(hp)\cdot_{p,q}(gq))\\
 &=&R_{pq}(\mathcal{R}(h)p \cdot_{p,q}gq+hp\cdot_{p,q}\mathcal{R}(g)q
 +\lambda(hp)\cdot_{p,q}(gq))\\
 &=&R_{pq}((\mathcal{R}(h)g)(pq)+(h\mathcal{R}(g))(pq)+\lambda(hg)(pq))\\
 &=&\mathcal{R}(\mathcal{R}(h)g+h\mathcal{R}(g)+\lambda hg)(pq)
 \end{eqnarray*}
 and
 \begin{eqnarray*}
 R_{p}(hp)\cdot_{p,q}R_{q}(gq)
 &=&(\mathcal{R}(h)p)\cdot_{p,q}(\mathcal{R}(g)q)=(\mathcal{R}(h)\mathcal{R}(g))(pq).
 \end{eqnarray*}
 Therefore we finish the proof.
 \end{proof}

 \begin{rmk}\label{rmk:15.3-1} Theorem \ref{thm:15.3} shows that a Rota-Baxter T-algebra $(\mathcal{A}[\pi],R)$ if $(\mathcal{A},\mathcal{R})$ is a Rota-Baxter algebra of weight $\lambda$.
 \end{rmk}

 Next we provide another approach to construct Rota-Baxter T-algebras.

 \begin{defi}\label{de:15.2} Let $(\mathcal{A},\mathcal{R})$ and $(\mathcal{A},\mathcal{R}')$ be two Rota-Baxter algebras of weight $\lambda$. A pair $(\mathcal{R},\mathcal{R}')$ on $\mathcal{A}$ is called a {\bf Rota-Baxter pair of weight $\lambda$} if it satisfies
 \begin{eqnarray}
 \mathcal{R}(h)\mathcal{R}'(g)=\mathcal{R}'(\mathcal{R}(h)g+h \mathcal{R}'(g)+\lambda hg),\label{eq:15.1}\\
 \mathcal{R}'(h)\mathcal{R}(g)=\mathcal{R}'(\mathcal{R}'(h)g+h \mathcal{R}(g)+\lambda hg),\label{eq:15.2}
 \end{eqnarray}
 for all $h,g\in \mathcal{A}$.
 \end{defi}

 \begin{rmk}\label{rmk:15.2-1} (1) In particular, if $(\mathcal{A},\mathcal{R})$ is a Rota-Baxter algebra of weight $\lambda$, then $(\mathcal{R},\mathcal{R})$ is also a Rota-Baxter pair of weight $\lambda$.

 (2) If furthermore $\mathcal{A}$ is commutative, then $(\mathcal{R},\mathcal{R}')$ is a Rota-Baxter pair if Eq.(\ref{eq:15.1}) or Eq.(\ref{eq:15.2}) holds.
 \end{rmk}

 Since this is the first paper for Rota-Baxter T-algebras, we present examples as detailed as possible for the applications in subsequent research.

 The following Rota-Baxter operators on the algebra of dimension 2 were studied by de Bragan$\c{c}$a in \cite{de}, by An and Bai in \cite{AB}, by Gubarev in \cite{G} and by Makhlouf, Silvestrov and the first author in \cite{MMS}.
 \begin{ex}\label{ex:15.15} We consider the 2-dimensional algebra $\mathcal{A}=K\{u_1,u_2|u_1\cdot u_i=u_i\cdot u_1=u_i,i=1,2, u_2\cdot u_2 =u_2\}$. Then the Rota-Baxter operators on $\mathcal{A}$ of weight $\lambda$ are
 \begin{enumerate}
 \item $\mathcal{R}(u_1)=-\lambda u_1, \  \mathcal{R}(u_2)=0, $
 \item $\mathcal{R}(u_1)=0, \  \mathcal{R}(u_2)=-\lambda u_2, $
 \item $\mathcal{R}(u_1)=-\lambda u_1, \  \mathcal{R}(u_2)=-\lambda u_2, $
 \item $\mathcal{R}(u_1)=-\lambda u_2, \  \mathcal{R}(u_2)=-\lambda u_2, $
 \item $\mathcal{R}(u_1)=-\lambda u_1, \  \mathcal{R}(u_2)=-\lambda u_1, $
 \item $\mathcal{R}(u_1)=\lambda u_2, \  \mathcal{R}(u_2)=0, $
 \item $\mathcal{R}(u_1)=0, \  \mathcal{R}(u_2)=\lambda u_1-\lambda u_2, $
 \item $\mathcal{R}(u_1)=-2 \lambda u_1+\lambda u_2, \  \mathcal{R}(u_2)=-\lambda u_1, $
 \item $\mathcal{R}(u_1)=- \lambda u_1-\lambda u_2, \  \mathcal{R}(u_2)=-\lambda u_2, $
 \item $\mathcal{R}(u_1)=- \lambda u_1+\lambda u_2, \  \mathcal{R}(u_2)=0, $
 \item $\mathcal{R}(u_1)= \lambda u_1-\lambda u_2, \  \mathcal{R}(u_2)= \lambda u_1-\lambda u_2. $
 \end{enumerate}
 \end{ex}

 Now we list some Rota-Baxter operators on the algebras of dimensions 3 and 4 given by Makhlouf, Silvestrov and the first author in \cite{MMS}.

 \begin{ex}\label{ex:15.16} For the 3-dimensional algebra $\mathcal{A}=K\{u_1,u_2,u_3|u_1\cdot u_i=u_i\cdot u_1=u_i,i=1,2,3, u_2\cdot u_2 =u_2, u_2\cdot u_3 =u_3\cdot u_2 =u_3, u_3\cdot u_3 =0\}$, the Rota-Baxter operators of weight $\lambda$ on $\mathcal{A}$ are
 \begin{enumerate}
 \item $\mathcal{R}(u_1)=-\lambda u_1, \  \mathcal{R}(u_2)=0,\mathcal{R}(u_3)=0,$
 \item $\mathcal{R}(u_1)=0, \  \mathcal{R}(u_2)=-\lambda u_2,\mathcal{R}(u_3)=0,$
 \item $\mathcal{R}(u_1)=0, \  \mathcal{R}(u_2)=0,\mathcal{R}(u_3)=-\lambda u_3,$
 \item $\mathcal{R}(u_1)=-\lambda u_1, \  \mathcal{R}(u_2)=-\lambda u_2,\mathcal{R}(u_3)=0,$
 \item $\mathcal{R}(u_1)=0, \  \mathcal{R}(u_2)=-\lambda u_2,\mathcal{R}(u_3)=-\lambda u_3,$
 \item $\mathcal{R}(u_1)=-\lambda u_1, \  \mathcal{R}(u_2)=0,\mathcal{R}(u_3)=-\lambda u_3,$
 \item $\mathcal{R}(u_1)=-\lambda u_1, \  \mathcal{R}(u_2)=-\lambda u_2,\mathcal{R}(u_3)=-\lambda u_3,$
 \item $\mathcal{R}(u_1)=-\lambda u_2, \  \mathcal{R}(u_2)=-\lambda u_2,\mathcal{R}(u_3)=0,$
 \item $\mathcal{R}(u_1)=-\lambda u_2, \  \mathcal{R}(u_2)=-\lambda u_2,\mathcal{R}(u_3)=-\lambda u_3,$
 \item $\mathcal{R}(u_1)=\lambda u_2, \  \mathcal{R}(u_2)=0,\mathcal{R}(u_3)=0,$
 \item $\mathcal{R}(u_1)=\lambda u_2, \  \mathcal{R}(u_2)=0,\mathcal{R}(u_3)=-\lambda u_3,$
 \item $\mathcal{R}(u_1)=0, \  \mathcal{R}(u_2)=\lambda u_1-\lambda u_2,\mathcal{R}(u_3)=0,$
 \item $\mathcal{R}(u_1)=0, \  \mathcal{R}(u_2)=\lambda u_1-\lambda u_2,\mathcal{R}(u_3)=-\lambda u_3,$
 \item $\mathcal{R}(u_1)=-2\lambda u_1+\lambda u_2, \  \mathcal{R}(u_2)=-\lambda u_1,\mathcal{R}(u_3)=0,$
 \item $\mathcal{R}(u_1)=-2\lambda u_1+\lambda u_2, \  \mathcal{R}(u_2)=-\lambda u_1,\mathcal{R}(u_3)=-\lambda u_3,$
 \item $\mathcal{R}(u_1)=-\lambda u_1-\lambda u_2, \  \mathcal{R}(u_2)=-\lambda u_2,\mathcal{R}(u_3)=0,$
 \item $\mathcal{R}(u_1)=-\lambda u_1-\lambda u_2, \  \mathcal{R}(u_2)=-\lambda u_2,\mathcal{R}(u_3)=-\lambda u_3,$
 \item $\mathcal{R}(u_1)=-\lambda u_1+\lambda u_2, \  \mathcal{R}(u_2)=0,\mathcal{R}(u_3)=0,$
 \item $\mathcal{R}(u_1)=-\lambda u_1+\lambda u_2, \  \mathcal{R}(u_2)=0,\mathcal{R}(u_3)=-\lambda u_3,$
 \item $\mathcal{R}(u_1)=-\lambda u_1, \  \mathcal{R}(u_2)=-\lambda u_1,\mathcal{R}(u_3)=0,$
 \item $\mathcal{R}(u_1)=-\lambda u_1, \  \mathcal{R}(u_2)=-\lambda u_1,\mathcal{R}(u_3)=-\lambda u_3,$
 \item $\mathcal{R}(u_1)=\lambda u_1-\lambda u_2, \  \mathcal{R}(u_2)=\lambda u_1-\lambda u_2,\mathcal{R}(u_3)=0,$
 \item $\mathcal{R}(u_1)=\lambda u_1-\lambda u_2, \  \mathcal{R}(u_2)=\lambda u_1-\lambda u_2,\mathcal{R}(u_3)=-\lambda u_3.$
 \end{enumerate}
 \end{ex}

 \begin{ex}\label{ex:15.17} For the Taft-Sweedler algebra $\mathcal{A}=T_2=K\{u_1=1, u_2=g, u_3=x, u_4=gx|g^2=1, x^2=0, xg=-gx\}$, its multiplication can be given by the following table.
 \[
 \begin{array}{|c|c|c|c|c|}
  \hline
  \cdot & u_1& u_2 & u_3 & u_4  \\ \hline
   u_1& u_1& u_2 & u_3 & u_4 \\ \hline
   u_2 &u_2 & u_1 & u_4 & u_3 \\ \hline
   u_3 &u_3 & -u_4 & 0 & 0  \\ \hline
   u_4 &u_4 & -u_3 & 0 & 0  \\ \hline
 \end{array}~~.
 \]

 The Rota-Baxter operators on $\mathcal{A}$ of weight $\lambda$ are given by
 \begin{enumerate}
 \item $\mathcal{R}(u_1)=0, \  \mathcal{R}(u_2)=0, \mathcal{R}(u_3)=-\lambda u_3, \  \mathcal{R}(u_4)=-\lambda u_4, \  $
 \item $\mathcal{R}(u_1)=-\lambda u_1, \  \mathcal{R}(u_2)=-\lambda u_2, \mathcal{R}(u_3)=0, \  \mathcal{R}(u_4)=0, \  $
 \item $\mathcal{R}(u_1)=-\lambda u_1, \  \mathcal{R}(u_2)=-\lambda u_2, \mathcal{R}(u_3)=-\lambda u_3, \  \mathcal{R}(u_4)=-\lambda u_4.\ $
 \item $\mathcal{R}(u_1)=0,
     \\\mathcal{R}(u_2)=-p_1u_1+p_1u_2-\frac{(\lambda+p_1)(\lambda+p_1+p_2)}{p_3}u_3+\frac{(\lambda+p_1)
     (\lambda+p_2)}{p_3}u_4,
     \\ \mathcal{R}(u_3)=-p_3u_1+p_3u_2-(2\lambda+p_1+p_2)u_3+(\lambda+p_2)u_4,
     \\ \mathcal{R}(u_4)=-p_3u_1+p_3u_2-(\lambda+p_1+p_2)u_3+p_2u_4,$
 \item $\mathcal{R}(u_1)=-\lambda u_1, \\\mathcal{R}(u_2)=(\lambda+p_1)u_1+p_1u_2-\frac{(\lambda+p_1)(\lambda+p_1+p_2)}{p_3}u_3+\frac{(\lambda+p_1)
     (\lambda+p_2)}{p_3}u_4,
     \\ \mathcal{R}(u_3)=p_3u_1+p_3u_2-(2\lambda+p_1+p_2)u_3+(\lambda+p_2)u_4,
     \\ \mathcal{R}(u_4)=p_3u_1+p_3u_2-(\lambda+p_1+p_2)u_3+p_2u_4,$
 \item $\mathcal{R}(u_1)=-\lambda u_1,~~\mathcal{R}(u_2)=\lambda u_1+p_1u_3+\frac{p_1p_2}{\lambda+p_2}u_4,
     \\ \mathcal{R}(u_3)=-(\lambda+p_2)u_3-p_2u_4,~~
     \mathcal{R}(u_4)=(\lambda+p_2)u_3+p_2u_4,$
 \item $\mathcal{R}(u_1)=-\lambda u_1,~~\mathcal{R}(u_2)=\lambda u_1+\frac{\lambda(\lambda+p_1)}{p_2}u_3+\frac{\lambda(\lambda+p_1)}{p_2}u_4,
    \\ \mathcal{R}(u_3)=-p_2u_1-p_2u_2-(2\lambda+p_1)u_3-(\lambda+p_1)u_4,
    \\ \mathcal{R}(u_4)=p_2u_1+p_2u_2+(\lambda+p_1)u_3+p_1u_4,$
 \item $\mathcal{R}(u_1)=\frac{1}{2}\lambda u_1-\frac{1}{2}\lambda u_2 +p_1u_3+p_2u_4,~~\mathcal{R}(u_2)=\frac{1}{2}\lambda u_1-\frac{1}{2}\lambda u_2-p_2u_3+p_1u_4,\
    \\ \mathcal{R}(u_3)=-\frac{1}{2}\lambda u_3-\frac{1}{2}\lambda u_4, ~~\mathcal{R}(u_4)=-\frac{1}{2}\lambda u_3-\frac{1}{2}\lambda u_4.$
 \end{enumerate}
 \end{ex}

 On the basis of the above examples, we have

 \begin{ex}\label{ex:15.5} Let $\mathcal{A}$ be the 2-dimensional algebra defined in Example \ref{ex:15.15}. Then
 \begin{eqnarray*}
 && (a,c),\quad (a,d),\quad (a,e),\quad (a,j),\quad (b,c),\quad (b,d),\quad (b,j),\quad (c,d),\quad (c,g),\quad (c,j),\qquad\\
 && (d,i),\quad (d,j),\quad (d,k),\quad (e,j),\quad (f,h),\quad (f,j),\quad (g,j),\quad (h,j),\quad (i,j),\quad (i,k),\qquad\\
 && (c,b),\quad (d,c),\quad (e,a),\quad (e,b),\quad (e,c),\quad (e,d),\quad (f,c),\quad (f,d),\quad (g,b),\quad (g,c),\qquad\\
 && (g,d),\quad(h,c),\quad (h,d),\quad (h,f),\quad (i,c),\quad (i,d),\quad (j,c),\quad (j,d),\quad (j,f),\quad (j,h),\qquad\\
 && (k,c),\quad(k,d),\quad (k,i)\qquad
 \end{eqnarray*}
 are the Rota-Baxter pairs $(\mathcal{R}, \mathcal{R}')$ on $\mathcal{A}$ of weight $\lambda$, where $(a,c)$ represents $(\mathcal{R}, \mathcal{R}')$, and $\mathcal{R}, \mathcal{R}'$ take the case $(a)$ and case $(c)$ respectively in Example \ref{ex:15.15}, i.e.,
 \begin{eqnarray*}
 &\mathcal{R}(u_1)=-\lambda u_1,\ \mathcal{R}(u_2)=0,\ \qquad \mathcal{R}'(u_1)=-\lambda u_1,\  \mathcal{R}'(u_2)=-\lambda u_2.&
 \end{eqnarray*}
 Others are similar.
 \end{ex}

 \begin{ex}\label{ex:15.6} Let $\mathcal{A}$ be the 3-dimensional algebra defined in Example \ref{ex:15.16}. Then
 \begin{eqnarray*}
 &&(a,d),\quad (a,g),\quad (a,h),\quad (a,i),\quad (a,r),\quad (a,t),\quad (a,u),\quad (b,d),\quad (b,g),\quad (b,h),\quad (b,i),\qquad\\
 && (b,l),\quad (b,r),\quad (c,f),\quad (c,g),\quad (c,i),\quad (c,r),\quad (c,s),\quad (c,u),\quad (d,g),\quad (d,h),\quad (d,i),\qquad\\
 && (d,l),\quad (d,r),\quad (e,g),\quad (e,i),\quad (e,l),\quad (e,m),\quad (e,r),\quad (e,s),\quad (e,u),\quad (f,g),\quad (f,i),\qquad\\
 && (f,r),\quad (f,s),\quad (f,u),\quad (g,i),\quad (g,l),\quad (g,m),\quad (g,r),\quad (g,s),\quad (g,v),\quad (g,w),\quad (h,i),\qquad\\
 && (h,r),\quad (h,v),\quad (i,q),\quad (i,r),\quad (i,s),\quad (i,v),\quad (i,w),\quad (j,n),\quad (j,o),\quad (j,r),\quad (k,o),\qquad\\
 && (k,r),\quad (k,s),\quad (l,r),\quad (m,r),\quad (m,s),\quad (n,o),\quad (n,r),\quad (o,r),\quad
 (o,s),\quad (p,r),\quad (p,v),\qquad\\
 && (q,r),\quad (q,s),\quad (q,v),\quad (q,w),\quad (t,u),\quad (d,b),\quad (e,c),\quad (f,c),\quad
 (g,c),\quad (g,e),\quad (h,d),\qquad\\
 && (h,g),\quad (i,c),\quad (i,g),\quad (j,g),\quad (j,h),\quad (j,i),\quad (k,c),\quad (k,g),\quad
 (k,i),\quad (l,b),\quad (l,d),\qquad\\
 && (l,e),\quad (l,g),\quad (l,h),\quad (l,i),\quad (m,c),\quad (m,e),\quad (m,g),\quad (m,i),\quad
 (m,l),\quad (n,d),\quad (n,g),\qquad\\
 && (n,h),\quad (n,i),\quad (n,j),\quad (o,c),\quad (o,g),\quad (o,i),\quad (o,k),\quad (p,d),\quad
 (p,g),\quad (p,h),\quad (p,i),\qquad\\
 && (q,c),\quad (q,g),\quad (q,i),\quad (r,d),\quad (r,g),\quad (r,h),\quad (r,i),\quad (r,j),\quad
 (r,n),\quad (r,o),\quad (s,c),\qquad\\
 && (s,g),\quad (s,i),\quad (s,k),\quad (s,o),\quad (s,r),\quad (t,a),\quad (t,d),\quad (t,g),\quad
 (t,h),\quad (t,i),\quad (t,r),\qquad\\
 && (u,c),\quad (u,f),\quad (u,g),\quad (u,i),\quad (u,r),\quad (u,s),\quad (v,d),\quad (v,g),\quad
 (v,h),\quad (v,i),\quad (v,p),\qquad\\
 && (v,r),\quad (w,c),\quad (w,g),\quad (w,i),\quad (w,q),\quad (w,r),\quad (w,s),\quad (w,v)\qquad
 \end{eqnarray*}
 are the Rota-Baxter pairs $(\mathcal{R}, \mathcal{R}')$ on $\mathcal{A}$ of weight $\lambda$, where $a-w$ represent the cases $(a)-(w)$ in Example \ref{ex:15.16}.
 \end{ex}

 \begin{ex}\label{ex:15.7} Let $\mathcal{A}$ be the 4-dimensional algebra defined in Example \ref{ex:15.17}. Then
 \begin{eqnarray*}
 &(a,c),\quad (b,c),\quad (c,a),\quad (d,c),\quad (e,c),\quad (f,c),\quad (g,c),\quad (h,c)\quad &
 \end{eqnarray*}
 are the Rota-Baxter pairs $(\mathcal{R}, \mathcal{R}')$ on $\mathcal{A}$ of weight $\lambda$, where $a-h$ represent the cases $(a)-(h)$ in Example \ref{ex:15.17}.
 \end{ex}

 \begin{rmk} In Examples \ref{ex:15.5}-\ref{ex:15.7}, we leave out the case of $(R, R)$ since it can be seen as the special case of Theorem \ref{thm:15.3}.
 \end{rmk}

 \begin{thm}\label{thm:15.4} Let $\pi=\{1,q\}$ be a monoid with a unit $1$ and $q^{2}=q$. Assume that $(\mathcal{A},\mathcal{R})$ and $(\mathcal{A},\mathcal{R}')$ are Rota-Baxter algebras of weight $\lambda$. Define
 \begin{eqnarray*}
 &&R_{1_{\pi}}:\mathcal{A} 1_{\pi}  \longrightarrow \mathcal{A} 1_{\pi} \qquad\qquad R_{q}:\mathcal{A} q \longrightarrow \mathcal{A} q \qquad\\
 &&\qquad\  h 1_{\pi}  \longmapsto \mathcal{R}(h) 1_{\pi} \quad\qquad\qquad   h q  \longmapsto \mathcal{R}'(h) q,\qquad
 \end{eqnarray*}
 then $(\mathcal{A}[\pi],\{R_{\varphi}\},\lambda)$ is a Rota-Baxter T-algebra of weight $\lambda$ if and only if $(\mathcal{R},\mathcal{R}')$ is a Rota-Baxter pair.
 \end{thm}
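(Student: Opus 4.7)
The plan is to reduce the Rota-Baxter T-algebra axiom \meqref{eq:5.39} to a finite case analysis over the pairs $(p,q) \in \pi \times \pi = \{1,q\} \times \{1,q\}$. Since $\pi$ consists of exactly two elements with the given semigroup law, there are only four instances of \meqref{eq:5.39} to check, and each one can be translated directly through the definitions of $R_{1_\pi}$, $R_q$, and the tensor-product multiplication $\mu_{p,q}$ on $\mathcal{A}[\pi]$ (as recalled just before Theorem~\ref{thm:15.3}).

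First, I would fix notation: for $h,g \in \mathcal{A}$, write out the identity \meqref{eq:5.39} applied to $hp \otimes gq$ by evaluating both sides in $\mathcal{A}(pq)$. Since $\mu_{p,q}(hp \otimes gq) = (hg)(pq)$, each case reduces to a statement about the action of $\mathcal{R}$ and $\mathcal{R}'$ on $\mathcal{A}$. The four cases will come out as follows: $(p,q)=(1_\pi,1_\pi)$ gives the Rota-Baxter identity for $\mathcal{R}$ in $\mathcal{A}$; $(p,q)=(q,q)$, using $q^2 = q$ so that $R_{qq}=R_q$ implements $\mathcal{R}'$, gives the Rota-Baxter identity for $\mathcal{R}'$ in $\mathcal{A}$; while the two mixed cases $(p,q) = (1_\pi, q)$ and $(p,q) = (q, 1_\pi)$, again using $1_\pi \cdot q = q = q \cdot 1_\pi$ so that $R_{pq}$ acts via $\mathcal{R}'$, translate exactly to Eqs.~\meqref{eq:15.1} and \meqref{eq:15.2} respectively.

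For the forward direction, I would assume $(\mathcal{A}[\pi], \{R_\varphi\}, \lambda)$ is a Rota-Baxter T-algebra and extract \meqref{eq:15.1} from the $(1_\pi, q)$ instance and \meqref{eq:15.2} from the $(q, 1_\pi)$ instance; the remaining two instances only reconfirm that $\mathcal{R}$ and $\mathcal{R}'$ are Rota-Baxter on $\mathcal{A}$, which is part of the hypothesis but also comes out as a consistency check. For the converse, I would assume $(\mathcal{R},\mathcal{R}')$ is a Rota-Baxter pair (which by Definition~\ref{de:15.2} already assumes both $\mathcal{R}$ and $\mathcal{R}'$ are Rota-Baxter operators on $\mathcal{A}$ of weight $\lambda$) and verify \meqref{eq:5.39} in all four cases, matching each to the appropriate axiom among the Rota-Baxter identities for $\mathcal{R}$, for $\mathcal{R}'$, and Eqs.~\meqref{eq:15.1}--\meqref{eq:15.2}.

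There is no real obstacle here; the argument is a bookkeeping exercise, essentially parallel to the computation carried out in the proof of Theorem~\ref{thm:15.3}, the only subtlety being to keep track of which of $\mathcal{R}$ or $\mathcal{R}'$ the operator $R_{pq}$ implements in each of the four products $p\cdot q$. The most interesting conceptual point, which I would highlight at the end, is that the pair condition in Definition~\ref{de:15.2} encodes exactly the two nontrivial cross-interactions of $\mathcal{R}$ and $\mathcal{R}'$ required to glue the Rota-Baxter structures on the two graded components $\mathcal{A} 1_\pi$ and $\mathcal{A} q$ into a single Rota-Baxter T-algebra structure on $\mathcal{A}[\pi]$.
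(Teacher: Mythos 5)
Your proposal is correct and follows essentially the same route as the paper: the paper's proof is precisely the four-case computation over $(p,q)\in\{1_\pi,q\}^2$, with the two mixed cases yielding Eqs.~\meqref{eq:15.1} and \meqref{eq:15.2} and the cases $(1_\pi,1_\pi)$ and $(q,q)$ reproducing the Rota-Baxter identities for $\mathcal{R}$ and $\mathcal{R}'$ already assumed in the hypotheses. Your observation about which of $\mathcal{R},\mathcal{R}'$ the operator $R_{pq}$ implements in each case is exactly the bookkeeping the paper carries out.
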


 \begin{proof} The proof is divided into the following four steps. For all $h,g\in \mathcal{A}$, we have\\
 Step 1
 \begin{eqnarray*}
 &&\hspace{-15mm}R_{1_{\pi} q}(R_{1_{\pi}}(h1_{\pi})\cdot_{1_{\pi},q} g q +h1_{\pi}\cdot_{1_{\pi},q}R_{q}(g q)+\lambda h1_{\pi} \cdot_{1_{\pi},q} g q )\\
 &=&R_{q}(\mathcal{R}(h)1_{\pi} \cdot_{1_{\pi},q}g q  +h 1_{\pi}\cdot_{1_{\pi},q}\mathcal{R}'(g)q
 +\lambda h1_{\pi}\cdot_{1_{\pi},q} g q)\\
 &=&R_{q}((\mathcal{R}(h)g)q+(h \mathcal{R}'(g))q+\lambda(hg)q)\\
 &=&\mathcal{R}'(\mathcal{R}(h)g+h \mathcal{R}'(g)+\lambda hg)q
 \end{eqnarray*}
 and
 \begin{eqnarray*}
 R_{1_{\pi}}(h 1_{\pi})\cdot_{1_{\pi},q}R_{q}(g q)=(\mathcal{R}(h)1_{\pi})\cdot_{1_{\pi},q}\mathcal{R}'(g)q=(\mathcal{R}(h)\mathcal{R}'(g))q.
 \end{eqnarray*}
 Step 2
 \begin{eqnarray*}
 &&\hspace{-15mm}R_{q 1_{\pi}}(R_{q}(h q)\cdot_{q, 1_{\pi}} g1_{\pi} +h q\cdot_{q,1_{\pi}}R_{1_{\pi}}(g1_{\pi})+\lambda h q \cdot_{q,1_{\pi}} g1_{\pi})\\
 &=&R_{q}(\mathcal{R}'(h)q \cdot_{q,1_{\pi}}g 1_{\pi}  +h q \cdot_{q,1_{\pi}}\mathcal{R}(g)1_{\pi}
 +\lambda h q \cdot_{q,1_{\pi}} g1_{\pi})\\
 &=&R_{q}((\mathcal{R}'(h)g)q+(h \mathcal{R}(g))q+\lambda(hg)q)\\
 &=&\mathcal{R}'(\mathcal{R}'(h)g+h \mathcal{R}(g)+\lambda hg)q
 \end{eqnarray*}
 and
 \begin{eqnarray*}
 R_{q}(h q)\cdot_{q,1_{\pi}}R_{1_{\pi}}(g1_{\pi})
 &=&(\mathcal{R}'(h)q)\cdot_{q,1_{\pi}}\mathcal{R}(g)1_{\pi}=(\mathcal{R}'(h)\mathcal{R}(g))q.
 \end{eqnarray*}
 Step 3
 \begin{eqnarray*}
 &&\hspace{-15mm}R_{1_{\pi} 1_{\pi}}(R_{1_{\pi}}(h1_{\pi})\cdot_{1_{\pi}, 1_{\pi}} g1_{\pi} +h1_{\pi}\cdot_{1_{\pi},1_{\pi}}R_{1_{\pi}}(g1_{\pi})+\lambda h1_{\pi} \cdot_{1_{\pi},1_{\pi}} g1_{\pi})\\
 &=&R_{1_{\pi}}(\mathcal{R}(h)1_{\pi} \cdot_{1_{\pi},1_{\pi}}g 1_{\pi}  +h1_{\pi} \cdot_{1_{\pi},1_{\pi}}\mathcal{R}(g)1_{\pi}
 +\lambda h1_{\pi} \cdot_{1_{\pi},1_{\pi}} g1_{\pi})\\
 &=&R_{1_{\pi}}((\mathcal{R}(h)g)1_{\pi}+(h \mathcal{R}(g))1_{\pi}+\lambda(hg)1_{\pi})\\
 &=&\mathcal{R}(\mathcal{R}(h)g+h \mathcal{R}(g)+\lambda hg)1_{\pi}
 \end{eqnarray*}
 and
 \begin{eqnarray*}
 R_{1_{\pi}}(h 1_{\pi})\cdot_{1_{\pi},1_{\pi}}R_{1_{\pi}}(g1_{\pi})
 =(\mathcal{R}(h)1_{\pi})\cdot_{1_{\pi},1_{\pi}}\mathcal{R}(g)1_{\pi}=(\mathcal{R}(h)\mathcal{R}(g))1_{\pi}.
 \end{eqnarray*}
 Step 4
 \begin{eqnarray*}
 &&\hspace{-15mm}R_{qq}(R_{q}(h q)\cdot_{q,q} g q +h q\cdot_{q,q}R_{q}(g q)+\lambda h q \cdot_{q,q} g q)\\
 &=&R_{q}(\mathcal{R}'(h)q \cdot_{q,q}g q +h q \cdot_{q,q}\mathcal{R}'(g)q
 +\lambda h q \cdot_{q,q} g q)\\
 &=&R_{q}((\mathcal{R}'(h)g)q+(h \mathcal{R}'(g))q+\lambda(hg)q)\\
 &=&\mathcal{R}'(\mathcal{R}'(h)g+h \mathcal{R}'(g)+\lambda hg)q
 \end{eqnarray*}
 and
 \begin{eqnarray*}
 R_{q}(h q)\cdot_{q,q}R_{q}(g q)
 =(\mathcal{R}'(h)q)\cdot_{q,q}\mathcal{R}'(g)q=(\mathcal{R}'(h)\mathcal{R}'(g))q.
 \end{eqnarray*}
 Therefore $(\mathcal{A}[\pi],\{R_{\varphi}\},\lambda)$ is a Rota-Baxter T-algebra of weight $\lambda$ if and only if $(\mathcal{R},\mathcal{R}')$ is a Rota-Baxter pair.
 \end{proof}

 According to Theorem \ref{thm:15.4}, each Rota-Baxter pair determines a Rota-Baxter T-algebra of weight $\lambda$, so by Examples \ref{ex:15.5}-\ref{ex:15.7}, we can get

 \begin{ex}\label{ex:15.8} Let $\pi=\{1,q\}$ be a monoid with a unit $1$ and $q^{2}=q$.

 (1) Let $\mathcal{A}$ be the 2-dimensional algebra defined in Example \ref{ex:15.15}. Then $(\mathcal{A}[\pi],\{R_{\varphi}\},\lambda)$ is a Rota-Baxter T-algebra of weight $\lambda$, where
 \begin{eqnarray*}
 &&R_{1_{\pi}}:\mathcal{A} 1_{\pi}  \longrightarrow \mathcal{A} 1_{\pi},~u_{1} 1_{\pi}  \longmapsto -\lambda u_{1} 1_{\pi},~u_{2} 1_{\pi}  \longmapsto 0 1_{\pi}\\  &&R_{q}:\mathcal{A} q \longrightarrow \mathcal{A} q,~u_{1} q  \longmapsto -\lambda u_{1} q,~u_{2} q  \longmapsto -\lambda u_{2} q.
 \end{eqnarray*}
 Other cases can be given similarly.

 (2) Let $\mathcal{A}$ be the 3-dimensional algebra defined in Example \ref{ex:15.16}. Then $(\mathcal{A}[\pi],\{R_{\varphi}\},\lambda)$ is a Rota-Baxter T-algebra of weight $\lambda$, where
 \begin{eqnarray*}
 &&R_{1_{\pi}}:\mathcal{A} 1_{\pi}  \longrightarrow \mathcal{A} 1_{\pi},~u_{1} 1_{\pi}  \longmapsto -\lambda u_{1} 1_{\pi},~u_{2} 1_{\pi}  \longmapsto 0 1_{\pi},~u_{3} 1_{\pi}  \longmapsto 0 1_{\pi}\\
 &&R_{q}:\mathcal{A} q \longrightarrow \mathcal{A} q,~u_{1} q  \longmapsto -\lambda u_{1} q,~u_{2} q  \longmapsto -\lambda u_{2} q,~u_{3} q  \longmapsto 0 q.
 \end{eqnarray*}
 Other cases are similar.

 (3) Let $\mathcal{A}$ be the 4-dimensional algebra defined in Example \ref{ex:15.17}. Then $(\mathcal{A}[\pi],\{R_{\varphi}\},\lambda)$ is a Rota-Baxter T-algebra of weight $\lambda$, where
 \begin{eqnarray*}
 &&R_{1_{\pi}}:\mathcal{A} 1_{\pi}  \longrightarrow \mathcal{A} 1_{\pi},~u_{1} 1_{\pi}  \longmapsto 0 1_{\pi},~u_{2} 1_{\pi}  \longmapsto 0 1_{\pi},~u_{3} 1_{\pi}  \longmapsto -\lambda u_{3} 1_{\pi},~u_{4} 1_{\pi}  \longmapsto -\lambda u_{4} 1_{\pi}\\
 &&R_{q}:\mathcal{A} q \longrightarrow \mathcal{A} q,~u_{1} q  \longmapsto -\lambda u_{1} q,~u_{2} q  \longmapsto -\lambda u_{2} q,~u_{3} q  \longmapsto -\lambda u_{3} q,~u_{4} q  \longmapsto -\lambda u_{4} q.
 \end{eqnarray*}
 Likewise we can obtain other cases.
 \end{ex}

 \subsection{Rota-Baxter T-algebras, (tri)dendriform T-algebras and T-algebras}
 \subsubsection{Dendriform T-algebras from tridendriform T-algebras }
 Motivated by algebraic K-theory, Loday invented the concept of a dendriform algebra.  Loday and Ronco introduced the concept of a tridendriform algebra (previously also called a dendriform trialgebra) in the study of polytopes and Koszul duality \cite{LR04}.  We now give the T-version.

 \begin{defi}\label{de:5.2} Let $\pi$ be a semigroup. A {\bf dendriform T-algebra} is a family of vector spaces $\{A_{\varphi}\}_{\varphi\in\pi}$ with a family of binary operations $\{\prec_{p,q},\succ_{p,q}: A_{p}\otimes A_{q}\lr A_{pq}\}_{p,q\in\pi}$ such that for $a\in A_{p}$, $b\in A_{q}$, $c\in A_{t}$ and $p,q,t\in \pi$,
 \begin{eqnarray}
 (a\prec_{p,q} b)\prec_{pq,t}c&=&a\prec_{p,qt}(b\prec_{q,t} c+b\succ_{q,t} c),\label{eq:5.4}\\
 (a\succ_{p,q} b)\prec_{pq,t}c&=&a\succ_{p,qt}(b\prec_{q,t} c),\label{eq:5.5}\\
 (a\prec_{p,q} b+a\succ_{p,q} b)\succ_{pq,t}c&=&a\succ_{p,qt}(b\succ_{q,t} c).\label{eq:5.6}
 \end{eqnarray}
 For simplicity, we denote it by $(\{A_{\varphi}\}_{\varphi\in\pi}, \{\prec_{p,q}\}_{p,q\in\pi}, \{\succ_{p,q}\}_{p,q\in\pi})$ or $(A, \prec, \succ)$.
 \end{defi}

 \begin{defi}\label{de:5.3} Let $\pi$ be a semigroup. A {\bf tridendriform T-algebra} is a family of vector spaces $\{A_{\varphi}\}_{\varphi\in\pi}$ with a family of binary operations $\{\prec_{p,q},\succ_{p,q},\bullet_{p,q}: A_{p}\otimes A_{q}\lr A_{pq}\}_{p,q\in\pi}$ such that for $a\in A_{p}$, $b\in A_{q}$, $c\in A_{t}$ and $p,q,t\in \pi$,
 \begin{eqnarray}
 (a\prec_{p,q} b)\prec_{pq,t}c&=&a\prec_{p,qt}(b\prec_{q,t} c+ b\succ_{q,t} c+b\bullet_{q,t} c),\label{eq:5.7}\\
 (a\succ_{p,q} b)\prec_{pq,t}c&=&a\succ_{p,qt}(b\prec_{q,t} c),\label{eq:5.8}\\
 (a\prec_{p,q} b+a\succ_{p,q} b+a\bullet_{p,q} b)\succ_{pq,t}c&=&a\succ_{p,qt}(b\succ_{q,t} c),\label{eq:5.9}\\
 (a\succ_{p,q} b)\bullet_{pq,t}c&=&a\succ_{p,qt}(b\bullet_{q,t} c),\label{eq:5.10}\\
 (a\prec_{p,q} b)\bullet_{pq,t}c&=&a\bullet_{p,qt}(b\succ_{q,t} c),\label{eq:5.11}\\
 (a\bullet_{p,q} b)\prec_{pq,t}c&=&a\bullet_{p,qt}(b\prec_{q,t} c),\label{eq:5.12}\\
 (a\bullet_{p,q} b)\bullet_{pq,t}c&=&a\bullet_{p,qt}(b\bullet_{q,t} c).\label{eq:5.13}
 \end{eqnarray}
 For simplicity, we denote it by $(\{A_{\varphi}\}_{\varphi\in\pi}, \{\prec_{p,q}\}_{p,q\in\pi}, \{\succ_{p,q}\}_{p,q\in\pi},\{\bullet_{p,q}\}_{p,q\in\pi})$ or $(A, \prec, \succ, \bullet)$.
 \end{defi}

 \begin{rmk} (1) If $\pi$ contains a single element (i.e, $\pi$ is trivial), then a (tri)dendriform T-algebra in Definition \ref{de:5.2} (\ref{de:5.3}) is exactly a (tri)dendriform algebra.

 (2) (Tri)dendriform T-algebra is different from (tri)dendriform family algebra introduced in \cite{ZG01}. For example, let $A_{\vp}=A$ in Definition \ref{de:5.2}, this particular case and \cite[Definition 4.1]{ZG01} can not contain each other.
 \end{rmk}

 \begin{pro}\label{pro:9.1} Let $(A, \prec, \succ, \bullet)$ be a tridendriform T-algebra. Then $(\{A_{\varphi}\}_{\varphi\in\pi}, \{\prec'_{p,q}\}_{p,q\in\pi}, \{\succ'_{p,q}\}_{p,q\in\pi})$ is a dendriform T-algebra, where the new operations $\{\prec'_{p,q},\succ'_{p,q}: A_{p}\otimes A_{q}\lr A_{pq}\}_{p,q\in\pi}$ are defined by
 \begin{eqnarray}
 a\prec'_{p,q} b:=a\prec_{p,q} b+a\bullet_{p,q} b,\quad a\succ'_{p,q} b:=a\succ_{p,q} b, \label{eq:9.1}
 \end{eqnarray}
 for $a\in A_{p}$, $b\in A_{q}$ and $p,q\in \pi$.
 \end{pro}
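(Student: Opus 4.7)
The plan is to verify the three dendriform T-algebra axioms (\ref{eq:5.4}), (\ref{eq:5.5}), (\ref{eq:5.6}) directly from the seven tridendriform T-algebra axioms (\ref{eq:5.7})--(\ref{eq:5.13}) by substituting the definitions $a \prec'_{p,q} b = a \prec_{p,q} b + a \bullet_{p,q} b$ and $a \succ'_{p,q} b = a \succ_{p,q} b$ and sorting terms by the original operations. Throughout, I fix $a \in A_p$, $b \in A_q$, $c \in A_t$.

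First I would handle (\ref{eq:5.5}), which is the cleanest. Expanding the left side gives $(a \succ_{p,q} b)\prec_{pq,t} c + (a \succ_{p,q} b)\bullet_{pq,t} c$; applying (\ref{eq:5.8}) to the first summand and (\ref{eq:5.10}) to the second yields $a \succ_{p,qt}(b \prec_{q,t} c) + a \succ_{p,qt}(b \bullet_{q,t} c) = a \succ_{p,qt}(b \prec'_{q,t} c)$, as required. Next, for (\ref{eq:5.6}), the left side becomes $(a \prec_{p,q} b + a \bullet_{p,q} b + a \succ_{p,q} b)\succ_{pq,t} c$, which equals $a \succ_{p,qt}(b \succ_{q,t} c)$ directly by (\ref{eq:5.9}); this matches $a \succ'_{p,qt}(b \succ'_{q,t} c)$.

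The main bookkeeping step is (\ref{eq:5.4}). Here the left side expands as
\[
(a \prec_{p,q} b + a \bullet_{p,q} b)\prec_{pq,t} c + (a \prec_{p,q} b + a \bullet_{p,q} b)\bullet_{pq,t} c,
\]
which splits into four terms controlled respectively by (\ref{eq:5.7}), (\ref{eq:5.12}), (\ref{eq:5.11}), (\ref{eq:5.13}). After applying these, the $\prec$-summands collapse to $a \prec_{p,qt}(b \prec_{q,t} c + b \succ_{q,t} c + b \bullet_{q,t} c)$, while the $\bullet$-summands combine to $a \bullet_{p,qt}(b \prec_{q,t} c + b \succ_{q,t} c + b \bullet_{q,t} c)$. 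Adding these two using the definition of $\prec'$ produces $a \prec'_{p,qt}(b \prec_{q,t} c + b \bullet_{q,t} c + b \succ_{q,t} c) = a \prec'_{p,qt}(b \prec'_{q,t} c + b \succ'_{q,t} c)$, matching the right side.

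The computation is entirely mechanical; the only mild obstacle is to keep track of which of the seven tridendriform axioms feeds each of the four terms in the expansion of (\ref{eq:5.4}), and to notice that the associativity-like pattern of the $\bullet$ operation (via (\ref{eq:5.11}), (\ref{eq:5.12}), (\ref{eq:5.13})) is exactly what makes the $\prec$ and $\bullet$ contributions assemble into the same right-hand sum so that they can be recombined into $\prec'$.
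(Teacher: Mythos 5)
Your proposal is correct and follows essentially the same route as the paper: expand $\prec'$ and $\succ'$, apply the seven tridendriform axioms term by term, and regroup. In fact you verify all three dendriform identities explicitly, whereas the paper only writes out Eq.~(\ref{eq:5.4}) (using (\ref{eq:5.7}) and (\ref{eq:5.11})--(\ref{eq:5.13}) exactly as you do) and leaves the other two as "similar."
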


 \begin{proof} We first prove Eq.(\ref{eq:5.4}). For $a\in A_{p}$, $b\in A_{q}$, $c\in A_{t}$ and $p,q,t\in \pi$,
 \begin{eqnarray*}
 (a\prec'_{p,q} b)\prec'_{pq,t} c&\stackrel{(\ref{eq:9.1})}=&
 (a\prec_{p,q} b) \prec_{pq,t} c+ (a\bullet_{p,q} b)\prec_{pq,t} c+(a\prec_{p,q} b) \bullet_{pq,t} c\\
 &&+ (a\bullet_{p,q} b)\bullet_{pq,t} c\\
 &\stackrel{(\ref{eq:5.7})(\ref{eq:5.11})-(\ref{eq:5.13})}=&
 a\prec_{p,qt} (b\prec_{q,t} c+ b\bullet_{q,t} c +b\succ_{q,t} c )+a\bullet_{p,qt} (b\prec_{q,t} c\\
 &&+ b\bullet_{q,t} c +b\succ_{q,t} c)\\
 &\stackrel{(\ref{eq:9.1})}=&a\prec'_{p,qt}(b\prec'_{q,t} c+b\succ'_{q,t} c).
 \end{eqnarray*}
 The rest can be checked in a similar way.
 \end{proof}

 \subsubsection{(Tri)Dendriform T-algebras from Rota-Baxter T-algebras } Rota-Baxter T-algebra of any weight can induce (tri)dendriform algebra as follows.

 \begin{pro}\label{pro:5.4} Let $\pi$ be a semigroup. (1) A Rota-Baxter T-algebra $(A, R)$ induces a tridendriform T-algebra $(\{A_{\varphi}\}_{\varphi\in\pi}, \{\prec_{p,q}\}_{p,q\in\pi}, \{\succ_{p,q}\}_{p,q\in\pi},\{\bullet_{p,q}\}_{p,q\in\pi})$, where
 \begin{eqnarray}
 & a \prec_{p,q} b:= a \cdot_{p,q} R_{q}(b),\quad  a \succ_{p,q} b:= R_{p}(a) \cdot_{p,q} b \quad and \quad a \bullet_{p,q} b:=\lambda a \cdot_{p,q} b, &\label{eq:5.15}
 \end{eqnarray}
 for $a\in A_{p}$, $b\in A_{q}$ and $p,q\in \pi$.

 (2) A Rota-Baxter T-algebra $(A, R)$ induces a dendriform T-algebra $(\{A_{\varphi}\}_{\varphi\in\pi}, \{\prec_{p,q}\}_{p,q\in\pi}, \{\succ_{p,q}\}_{p,q\in\pi})$, where
 \begin{eqnarray}
 &a \prec_{p,q} b:= a \cdot_{p,q} R_{q}(b)+ \lambda a \cdot_{p,q} b,\quad  a \succ_{p,q} b:= R_{p}(a) \cdot_{p,q} b,&\label{eq:5.14}
 \end{eqnarray}
 for $a\in A_{p}$, $b\in A_{q}$ and $p,q\in \pi$.
 \end{pro}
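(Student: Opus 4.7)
The plan is to verify the two parts in sequence, using part (1) to bootstrap part (2) via Proposition \ref{pro:9.1}.

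For part (1), I will check the seven tridendriform axioms \meqref{eq:5.7}--\meqref{eq:5.13} directly, fixing $a \in A_p$, $b \in A_q$, $c \in A_t$. The strategy is to separate the axioms into two groups. The axioms \meqref{eq:5.8}, \meqref{eq:5.10}, \meqref{eq:5.11}, \meqref{eq:5.12}, \meqref{eq:5.13} each involve at most one occurrence of $R$ on each side, and I expect them to follow solely from the associativity of $\cdot_{p,q}$ (i.e.\ \meqref{eq:5.37}), possibly with a scalar $\lambda$ or $\lambda^2$ factored through. For example, for \meqref{eq:5.8} both sides reduce to $R_p(a)\cdot_{p,qt}(b\cdot_{q,t}R_t(c))$ after one application of associativity, and the $\bullet$--$\bullet$ identity \meqref{eq:5.13} is just $\lambda^2$ times associativity.

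The crux lies with \meqref{eq:5.7} and \meqref{eq:5.9}, which are the only places where the Rota-Baxter relation \meqref{eq:5.39} must actually be used. For \meqref{eq:5.7}, the left side unfolds by associativity to $a\cdot_{p,qt}(R_q(b)\cdot_{q,t}R_t(c))$, and I will rewrite the factor $R_q(b)\cdot_{q,t}R_t(c)$ using \meqref{eq:5.39} at the pair $(q,t)$, which produces precisely $R_{qt}(b\cdot_{q,t}R_t(c)+R_q(b)\cdot_{q,t}c+\lambda\, b\cdot_{q,t}c)$; comparing with the right side gives equality. Axiom \meqref{eq:5.9} is the dual manoeuvre: the left side is $R_{pq}(a\cdot_{p,q}R_q(b)+R_p(a)\cdot_{p,q}b+\lambda\, a\cdot_{p,q}b)\cdot_{pq,t}c$, which \meqref{eq:5.39} at the pair $(p,q)$ collapses to $(R_p(a)\cdot_{p,q}R_q(b))\cdot_{pq,t}c$, and associativity then yields the right side. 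I don't anticipate any real obstacle here; the main care is simply bookkeeping of the indices $p,q,t$ and the group product $pq, qt$ so that each invocation of \meqref{eq:5.39} is applied to the correct pair of indices.

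For part (2), rather than redoing seven analogous (but fewer) verifications, I will observe that the operations in \meqref{eq:5.14} are exactly the operations $\prec'_{p,q}=\prec_{p,q}+\bullet_{p,q}$ and $\succ'_{p,q}=\succ_{p,q}$ obtained from the tridendriform structure of part (1) via the recipe of Proposition \ref{pro:9.1}. Hence part (2) follows immediately from part (1) together with Proposition \ref{pro:9.1}, with no further computation required.
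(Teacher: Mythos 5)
Your proposal is correct and follows essentially the same route as the paper: the identities \meqref{eq:5.7} and \meqref{eq:5.9} are handled exactly as you describe via one application of \meqref{eq:5.39} at the pair $(q,t)$ (resp.\ $(p,q)$), the remaining axioms reduce to associativity \meqref{eq:5.37}, and part (2) is deduced from part (1) together with Proposition \ref{pro:9.1}. The only difference is cosmetic — the paper writes out \meqref{eq:5.7}, \meqref{eq:5.9}, \meqref{eq:5.10} and declares the rest similar, whereas you plan to check all seven.
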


 \begin{proof} (1) We only check that Eqs.(\ref{eq:5.7}), (\ref{eq:5.9}) and (\ref{eq:5.10}) hold. For all $a\in A_p, b\in A_q, c\in A_t$, we calculate
 \begin{eqnarray*}
 (a\prec_{p,q}b)\prec_{pq,t}c
 &\stackrel{(\ref{eq:5.15})}=&(a \cdot_{p,q} R_{q}(b)) \cdot_{pq,t} R_{t}(c)\\
 &\stackrel{(\ref{eq:5.39})}=&a \cdot_{p,qt} R_{qt}(R_{q}(b) \cdot_{q,t} c)+a \cdot_{p,qt} R_{qt}(b \cdot_{q,t} R_{t}(c))+\lambda a \cdot_{p,qt} R_{qt}(b \cdot_{q,t} c)\\
 &\stackrel{(\ref{eq:5.15})}=&
 a\prec_{p,qt}(b\prec_{q,t}c+b\succ_{q,t}c+b\bullet_{q,t}c),
 \end{eqnarray*}
 \begin{eqnarray*}
 (a\prec_{p,q}b+a\succ_{p,q}b+a\bullet_{p,q}b)\succ_{pq,t}c
 &\stackrel{(\ref{eq:5.15})}=&R_{pq}(a \cdot_{p,q} R_{q}(b)+R_{p}(a) \cdot_{p,q} b+\lambda a \cdot_{p,q} b) \cdot_{pq,t} c\\
 &\stackrel{(\ref{eq:5.39})}=&(R_{p}(a)\cdot_{p,q}R_{q}(b))\cdot_{pq,t}c
 \stackrel{(\ref{eq:5.15})}=a\succ_{p,qt}(b\succ_{q,t}c)
 \end{eqnarray*}
 and
 \begin{eqnarray*}
 (a\succ_{p,q}b)\bullet_{pq,t}c
 \stackrel{(\ref{eq:5.39})}=\lambda(R_{p}(a) \cdot_{p,q} b) \cdot_{pq,t} c=\lambda R_{p}(a) \cdot_{p,qt} (b \cdot_{q,t}c)
 \stackrel{(\ref{eq:5.39})}=a\succ_{p,qt}(b\bullet_{q,t}c).
 \end{eqnarray*}
 Other equalities in Definition \ref{de:5.3} can be proved similarly.
 Therefore, $(\{A_{\varphi}\}_{\varphi\in\pi}, \{\prec_{p,q}\}_{p,q\in\pi}, \{\succ_{p,q}\}_{p,q\in\pi},\{\bullet_{p,q}\}_{p,q\in\pi})$ is a tridendriform T-algebra.

 (2) It follows from Part (1) and Proposition \ref{pro:9.1}. This completes the proof.
 \end{proof}

 \begin{rmk} If $\l=0$ and $\pi$ is trivial in Part (2) of Proposition \ref{pro:5.4}, then we obtain \cite[Proposition 4.5]{Ag04}.
 \end{rmk}

 \begin{ex}\label{ex:15.12} By Proposition \ref{pro:5.4} and Example \ref{ex:15.8}, we can obtain a variety of ways to construct (tri)dendriform algebras of $\mathcal{A}[\pi]$ with different dimensions.

 (1) The new structures of tridendriform T-algebra on $\mathcal{A}[\pi]$ (where $\mathcal{A}$ is given in Example \ref{ex:15.15}) can be defined by 
 \begin{eqnarray*}
 &&u_{1}1_{\pi} \prec  u_{1}q=u_{1}1_{\pi} \prec  u_{2}q=u_{1}1_{\pi} \succ  u_{2}q=-u_{1}1_{\pi} \bullet  u_{2}q=u_{2}1_{\pi} \prec  u_{1}q=u_{2}1_{\pi} \succ  u_{1}q\\
 &&=u_{2}1_{\pi} \succ  u_{2}q=-u_{2}1_{\pi} \bullet  u_{2}q=u_{1}q \succ  u_{1}1_{\pi}=-u_{2}1_{\pi} \bullet  u_{1}q=u_{2}1_{\pi} \prec  u_{2}q=u_{1}q \prec  u_{2}1_{\pi}\\
 &&=u_{1}q \succ  u_{2}1_{\pi}=-u_{1}q \bullet  u_{2}1_{\pi}= u_{2}q \prec  u_{1}1_{\pi}=u_{2}q \succ  u_{1}1_{\pi}=-u_{2}q \bullet  u_{1}1_{\pi}=u_{2}q \prec  u_{2}1_{\pi}\\
 &&=u_{2}q \succ  u_{2}1_{\pi}=-u_{2}q \bullet  u_{2}1_{\pi}=u_{1}q \prec  u_{1}q=u_{1}q \succ  u_{1}q=u_{1}q \prec  u_{2}q=u_{1}q \succ  u_{2}q\\
 &&=-u_{1}q \bullet  u_{2}q=u_{2}q \prec  u_{1}q=u_{2}q \succ  u_{1}q=-u_{2}q \bullet  u_{1}q=u_{2}q \prec  u_{2}q=u_{2}q \succ  u_{2}q\\
 &&=-u_{2}q \bullet  u_{2}q=-\lambda u_{2} q,\\
 &&u_{1}1_{\pi} \prec  u_{2}1_{\pi}=u_{1}1_{\pi} \succ  u_{2}1_{\pi}=-u_{1}1_{\pi} \bullet  u_{2}1_{\pi}=u_{2}1_{\pi} \prec  u_{1}1_{\pi}=u_{2}1_{\pi} \succ  u_{1}1_{\pi}\\
 &&=-u_{2}1_{\pi} \bullet  u_{1}1_{\pi}=u_{2}1_{\pi} \prec  u_{2}1_{\pi}=u_{2}1_{\pi} \succ  u_{2}1_{\pi}=-u_{2}1_{\pi} \bullet  u_{2}1_{\pi}=-\lambda u_{2} 1_{\pi},\\
 &&u_{1}1_{\pi} \succ  u_{1}q=-u_{1}1_{\pi} \bullet  u_{1}q=u_{1}q \prec  u_{1}1_{\pi}=-u_{1}q \bullet  u_{1}q=-u_{1}q \bullet  u_{1}1_{\pi}=-\lambda u_{1} q,\\
 &&u_{1}1_{\pi} \prec  u_{1}1_{\pi}=u_{1}1_{\pi} \succ  u_{1}1_{\pi}=-u_{1}1_{\pi} \bullet  u_{1}1_{\pi}=-\lambda u_{1} 1_{\pi}.
 \end{eqnarray*}

 (2) The new structures of tridendriform T-algebra on $\mathcal{A}[\pi]$ (where $\mathcal{A}$ is given in Example \ref{ex:15.17}) can be defined by 
 \begin{eqnarray*}
 &&u_{1}1_{\pi} \prec  u_{1}q=u_{1}1_{\pi} \succ  u_{1}q=-u_{1}1_{\pi} \bullet  u_{1}q=u_{2}1_{\pi} \prec  u_{2}q=-u_{2}1_{\pi} \bullet  u_{2}q=u_{1}q \prec  u_{1}1_{\pi}\\
 &&=u_{1}q \succ  u_{1}1_{\pi}= -u_{1}q \bullet  u_{1}1_{\pi}=u_{2}q \succ  u_{2}1_{\pi}=-u_{2}q \bullet  u_{2}1_{\pi}=u_{1}q \prec  u_{1}q=u_{1}q \succ  u_{1}q\\
 &&=-u_{1}q \bullet  u_{1}q=u_{2}q \prec  u_{2}q=u_{2}q \succ  u_{2}q=-u_{2}q \bullet  u_{2}q=-\lambda u_{1} q,\\
 &&u_{1}1_{\pi} \prec  u_{2}q=u_{1}1_{\pi} \succ  u_{2}q=-u_{1}1_{\pi} \bullet  u_{2}q=u_{2}1_{\pi} \prec  u_{1}q=-u_{2}1_{\pi} \bullet  u_{1}q=u_{1}q \succ  u_{2}1_{\pi}\\
 &&= -u_{1}q \bullet  u_{2}1_{\pi}=u_{2}q \prec  u_{1}1_{\pi}=u_{2}q \succ  u_{1}1_{\pi}=-u_{2}q \bullet  u_{1}1_{\pi}=u_{1}q \prec  u_{2}q=u_{1}q \succ  u_{2}q\\
 &&=-u_{1}q \bullet  u_{2}q=u_{2}q \prec  u_{1}q=u_{2}q \succ  u_{1}q=-u_{2}q \bullet  u_{1}q=-\lambda u_{2} q,\\
 &&u_{1}1_{\pi} \prec  u_{3}q=u_{1}1_{\pi} \succ  u_{3}q=-u_{1}1_{\pi} \bullet  u_{3}q=u_{2}1_{\pi} \prec  u_{4}q=-u_{2}1_{\pi} \succ  u_{4}q=-u_{2}1_{\pi} \bullet  u_{4}q\\
 &&=u_{3}1_{\pi} \prec  u_{1}q=-u_{3}1_{\pi} \bullet  u_{1}q=-u_{4}1_{\pi} \prec  u_{2}q=u_{4}1_{\pi} \bullet  u_{2}q=u_{1}q \succ  u_{3}1_{\pi}=-u_{1}q \bullet  u_{3}1_{\pi}\\
 &&=u_{2}q \succ  u_{4}1_{\pi}=-u_{2}q \bullet  u_{4}1_{\pi}=u_{3}q \prec  u_{1}1_{\pi}=u_{3}q \succ  u_{1}1_{\pi}=u_{3}q \bullet  u_{1}1_{\pi}=u_{4}q \prec  u_{2}1_{\pi}\\
 &&=-u_{4}q \succ  u_{2}1_{\pi}=u_{4}q \bullet  u_{2}1_{\pi}=u_{1}q \prec  u_{3}q=u_{1}q \succ  u_{3}q=-u_{1}q \bullet  u_{3}q=u_{2}q \prec  u_{4}q\\
 &&=u_{2}q \succ  u_{4}q=-u_{2}q \bullet  u_{4}q=u_{3}q \prec  u_{1}q=u_{3}q \succ  u_{1}q=-u_{3}q \bullet  u_{1}q=-u_{4}q \prec  u_{2}q\\
 &&=-u_{4}q \succ  u_{2}q=u_{4}q \bullet  u_{2}q=-\lambda u_{3} q,\\
 &&u_{1}1_{\pi} \prec  u_{4}q=u_{1}1_{\pi} \succ  u_{4}q=-u_{1}1_{\pi} \bullet  u_{4}q=u_{2}1_{\pi} \prec  u_{3}q=-u_{2}1_{\pi} \succ  u_{3}q=-u_{2}1_{\pi} \bullet  u_{3}q\\
 &&=-u_{3}1_{\pi} \prec  u_{2}q=u_{4}1_{\pi} \prec  u_{1}q=u_{3}1_{\pi} \bullet  u_{2}q=-u_{4}1_{\pi} \bullet  u_{1}q=u_{1}q \succ  u_{4}1_{\pi}=-u_{1}q \bullet  u_{4}1_{\pi}\\
 &&=u_{2}q \succ  u_{3}1_{\pi}=-u_{2}q \bullet  u_{3}1_{\pi}=u_{3}q \prec  u_{2}1_{\pi}=-u_{3}q \succ  u_{2}1_{\pi}=u_{3}q \bullet  u_{2}1_{\pi}=u_{4}q \prec  u_{1}1_{\pi}\\
 &&=u_{4}q \succ  u_{1}1_{\pi}=-u_{4}q \bullet  u_{1}1_{\pi}=u_{1}q \prec  u_{4}q=u_{1}q \succ  u_{4}q=-u_{1}q \bullet  u_{4}q=u_{2}q \prec  u_{3}q\\
 &&=u_{2}q \succ  u_{3}q=-u_{2}q \bullet  u_{3}q=-u_{3}q \prec  u_{2}q=-u_{3}q \succ  u_{2}q=u_{3}q \bullet  u_{2}q=u_{4}q \prec  u_{1}q\\
 &&=u_{4}q \succ  u_{1}q=-u_{4}q \bullet  u_{1}q=-\lambda u_{4} q,\\
 &&u_{2}1_{\pi} \succ  u_{1}q= \lambda u_{2} q+p_{1}u_{3}q+\frac{p_{1}p_{2}}{\lambda+p_{2}}u_{4}q,\quad u_{2}1_{\pi} \succ  u_{2}q=\lambda u_{1} q-p_{3}u_{4}q-\frac{p_{1}p_{2}}{\lambda+p_{2}}u_{3}q,\\
 &&u_{3}1_{\pi} \succ  u_{1}q=-(\lambda+p_{2})u_{3}q-p_{2}u_{4}q,\quad u_{4}1_{\pi} \succ  u_{2}q=-(\lambda+p_{2})u_{4}q-p_{2}u_{3}q,\\
 &&u_{3}1_{\pi} \succ  u_{2}q=(\lambda+p_{2})u_{4}q+p_{2}u_{3}q,\quad
   u_{4}1_{\pi} \succ_{1_{\pi},q} u_{1}q= (\lambda+p_{2})u_{3}q+p_{2}u_{4}q,\\
 &&u_{1}q \prec  u_{2}1_{\pi}=\lambda u_{2} q+p_{1}u_{3}q+\frac{p_{1}p_{2}}{\lambda+p_{2}}u_{4}q,\quad u_{1}q \prec  u_{3}1_{\pi}=-(\lambda+p_{2})u_{3}q-p_{2}u_{4}q,\\
 &&u_{1}q \prec  u_{4}1_{\pi}= (\lambda+p_{2})u_{3}q+p_{2}u_{4}q,\quad u_{2}q \prec  u_{2}1_{\pi}= \lambda u_{1} q+p_{1}u_{4}q+\frac{p_{1}p_{2}}{\lambda+p_{2}}u_{3}q,\\
 &&u_{2}q \prec  u_{3}1_{\pi}=-(\lambda+p_{2})u_{4}q-p_{2}u_{3}q,\quad u_{2}q \prec  u_{4}1_{\pi}=(\lambda+p_{2})u_{4}q+p_{2}u_{3}q,\\
 &&u_{1}1_{\pi} \prec  u_{2}1_{\pi}= \lambda u_{2} 1_{\pi}+p_{1}u_{3}1_{\pi}+\frac{p_{1}p_{2}}{\lambda+p_{2}}u_{4}1_{\pi}=u_{2}1_{\pi} \succ  u_{1}1_{\pi},\\
 &&u_{1}1_{\pi} \prec  u_{3}1_{\pi}= -(\lambda+p_{2})u_{3}1_{\pi}-p_{2}u_{4}1_{\pi}=u_{3}1_{\pi} \succ  u_{1}1_{\pi},\\
 &&u_{1}1_{\pi} \prec  u_{4}1_{\pi}= (\lambda+p_{2})u_{3}1_{\pi}+p_{2}u_{4}1_{\pi}=u_{4}1_{\pi} \succ  u_{1}1_{\pi},\\
 &&u_{2}1_{\pi} \prec  u_{2}1_{\pi}= \lambda u_{1} 1_{\pi}+p_{1}u_{4}1_{\pi}+\frac{p_{1}p_{2}}{\lambda+p_{2}}u_{3}1_{\pi}, ~u_{2}1_{\pi} \succ  u_{2}1_{\pi}=\lambda u_{1} 1_{\pi}-p_{1}u_{4}1_{\pi}-\frac{p_{1}p_{2}}{\lambda+p_{2}}u_{3}1_{\pi},\\
 &&u_{2}1_{\pi} \prec  u_{3}1_{\pi}=-(\lambda+p_{2})u_{4}1_{\pi}+p_{2}u_{3}1_{\pi},\quad u_{2}1_{\pi} \prec  u_{4}1_{\pi}= (\lambda+p_{2})u_{4}1_{\pi}+p_{2}u_{3}1_{\pi},\\
 &&u_{1}1_{\pi} \prec  u_{1}1_{\pi}=u_{1}1_{\pi} \succ  u_{1}1_{\pi}=-u_{1}1_{\pi} \bullet  u_{1}1_{\pi}=-u_{2}1_{\pi} \bullet  u_{2}1_{\pi}=-u_{1}1_{\pi} \bullet  u_{1}1_{\pi}=-\lambda u_{1} 1_{\pi},\\
 &&u_{1}1_{\pi} \succ  u_{2}1_{\pi}=-u_{1}1_{\pi} \bullet  u_{2}1_{\pi}=u_{2}1_{\pi} \prec  u_{1}1_{\pi}= -u_{2}1_{\pi} \bullet  u_{1}1_{\pi}=-\lambda u_{2} 1_{\pi},\\
 &&u_{1}1_{\pi} \succ  u_{3}1_{\pi}=-u_{1}1_{\pi} \bullet  u_{3}1_{\pi}=u_{3}1_{\pi} \prec  u_{1}1_{\pi}=-u_{2}1_{\pi} \succ  u_{4}1_{\pi}=-u_{2}1_{\pi} \bullet  u_{4}1_{\pi}=-u_{3}1_{\pi} \bullet  u_{1}1_{\pi}\\
 &&=u_{4}1_{\pi} \prec  u_{2}1_{\pi}=u_{4}1_{\pi} \bullet  u_{2}1_{\pi}=-\lambda u_{3} 1_{\pi},\\
 &&u_{1}1_{\pi} \succ  u_{4}1_{\pi}=-u_{1}1_{\pi} \bullet  u_{4}1_{\pi}=-u_{2}1_{\pi} \succ  u_{3}1_{\pi}=-u_{2}1_{\pi} \bullet  u_{3}1_{\pi}=u_{3}1_{\pi} \prec  u_{2}1_{\pi}=u_{3}1_{\pi} \bullet u_{2}1_{\pi}\\
 &&=u_{4}1_{\pi} \prec  u_{1}1_{\pi}=-u_{4}1_{\pi} \bullet  u_{1}1_{\pi}=-\lambda u_{4} 1_{\pi},\\
 &&u_{3}1_{\pi} \succ  u_{2}1_{\pi}=(\lambda+p_{2})u_{4}1_{\pi}+p_{2}u_{3}1_{\pi},\quad u_{4}1_{\pi} \succ  u_{2}1_{\pi}=-(\lambda+p_{2})u_{4}1_{\pi}-p_{2}u_{3}1_{\pi}.
 \end{eqnarray*}
 The operations for the remaining cases are 0.

 (3) Based on Proposition \ref{pro:9.1} and (1), (2) above, we can get new structures of dendriform T-algebra on $\mathcal{A}[\pi]$, where $\mathcal{A}$ are given in Example \ref{ex:15.15} and Example \ref{ex:15.17}, respectively.
 \end{ex}

 \subsubsection{T-algebras from (tri)dendriform T-algebras}

 \begin{pro}\label{pro:5.5} (1) Let $(A, \prec, \succ)$ be a dendriform T-algebra. Then $(\{A_{\varphi}\}_{\varphi\in\pi},$ $\{\diamond_{p,q}\}_{p,q\in\pi})$ is a T-algebra, where $\{\diamond_{p,q}:A_{p}\otimes A_{q}\lr A_{pq}\}_{p,q\in\pi}$,
 \begin{eqnarray}
 &a \diamond_{p,q} b := a \prec_{p,q}b+a \succ_{p,q}b,\quad for\  a\in A_{p}, b\in A_{q}.&\label{eq:5.16}
 \end{eqnarray}

 (2) Let $(A, \prec, \succ, \bullet)$ be a tridendriform T-algebra. Then $(\{A_{\varphi}\}_{\varphi\in\pi},$ $\{\diamond_{p,q}\}_{p,q\in\pi})$ is a T-algebra, where $\{\diamond_{p,q}:A_{p}\otimes A_{q}\lr A_{pq}\}_{p,q\in\pi}$,
 \begin{eqnarray}
 &a \diamond_{p,q} b := a \prec_{p,q}b+a \bullet_{p,q}b+a \succ_{p,q}b,\quad for\  a\in A_{p},b\in A_{q}.&\label{eq:5.17}
 \end{eqnarray}
 \end{pro}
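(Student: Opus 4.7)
The plan is to verify the T-algebra associativity axiom \eqref{eq:5.37} for $\diamond$, namely
\[
(a \diamond_{p,q} b) \diamond_{pq,t} c = a \diamond_{p,qt} (b \diamond_{q,t} c)
\]
for all $a\in A_p$, $b\in A_q$, $c\in A_t$ and $p,q,t\in \pi$; this is the only thing needing proof, since no unit is claimed. I will drop the subscripts on $\prec,\succ,\bullet,\diamond$ below for readability, as they are forced by the homogeneous degrees.

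For part (1), I would expand the left-hand side by \eqref{eq:5.16} into the four summands
\[
(a \prec b) \prec c + (a \prec b) \succ c + (a \succ b) \prec c + (a \succ b) \succ c.
\]
Axiom \eqref{eq:5.4} rewrites the first term as $a \prec (b \prec c + b \succ c) = a \prec (b \diamond c)$; axiom \eqref{eq:5.5} rewrites the third as $a \succ (b \prec c)$; axiom \eqref{eq:5.6} collapses the sum of the second and fourth into $a \succ (b \succ c)$. Adding the three resulting contributions gives $a \prec (b \diamond c) + a \succ (b \prec c + b \succ c) = a \diamond (b \diamond c)$, which is the right-hand side.

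For part (2), the cleanest route is to invoke Proposition \ref{pro:9.1}: the tridendriform T-algebra $(A,\prec,\succ,\bullet)$ induces a dendriform T-algebra $(A,\prec',\succ')$ with $\prec'{} = \prec + \bullet$ and $\succ'{} = \succ$. The diamond obtained from $(A,\prec',\succ')$ via \eqref{eq:5.16} is $\prec + \bullet + \succ$, which agrees with the $\diamond$ defined by \eqref{eq:5.17}, so part (1) applied to $(A,\prec',\succ')$ yields the T-associativity for $\diamond$ at once. If a direct verification is preferred, one expands $(a \diamond b) \diamond c$ into the nine summands $(a\,\ast_1\, b)\,\ast_2\, c$ with $\ast_1,\ast_2 \in \{\prec,\succ,\bullet\}$ and groups by the second operation: axioms \eqref{eq:5.7}, \eqref{eq:5.8}, \eqref{eq:5.12} handle the $\prec c$ column, axioms \eqref{eq:5.10}, \eqref{eq:5.11}, \eqref{eq:5.13} handle the $\bullet c$ column, and axiom \eqref{eq:5.9} handles the $\succ c$ column. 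Regrouping the output by the first operation produces $a \prec (b \diamond c) + a \bullet (b \diamond c) + a \succ (b \diamond c) = a \diamond (b \diamond c)$.

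The main obstacle is purely bookkeeping: tracking which of the seven tridendriform axioms absorbs which combination $\ast_1,\ast_2$, and checking that every one of the nine summands on the left is accounted for exactly once among the nine summands on the right. There is no conceptual difficulty, since the (tri)dendriform axioms are precisely designed as a splitting of the associativity relation for $\diamond$ according to the placement of $\prec$, $\succ$ (and $\bullet$), and the semigroup indexing in $\pi$ enters only through the degrees of the arguments and plays no independent role.
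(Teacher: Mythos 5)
Your proof is correct and follows essentially the same route as the paper: part (1) by expanding $(a\diamond b)\diamond c$ into the four summands and absorbing them via Eqs.~(\ref{eq:5.4})--(\ref{eq:5.6}), and part (2) by reducing to part (1) through Proposition \ref{pro:9.1}. The extra direct nine-term verification you sketch for part (2) is also accurate, but it is not needed and is not what the paper does.
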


 \begin{proof} (1) For all $a\in A_p, b\in A_q, c\in A_t$, we get
 \begin{eqnarray*}
 (a\diamond_{pq}b)\diamond_{pq,t}c
  &\stackrel{(\ref{eq:5.16})}=&(a\prec_{p,q}b+a\succ_{p,q}b)\prec_{pq,t}c
  +(a\prec_{p,q}b+a\succ_{p,q}b)\succ_{pq,t}c\\
  &\stackrel{(\ref{eq:5.4})(\ref{eq:5.5})(\ref{eq:5.6})}=&a\prec_{p,qt}(b\prec_{q,t}c+b\succ_{q,t}c)
  +a\succ_{p,qt}(b\prec_{q,t}c)+a\succ_{p,qt}(b\succ_{q,t}c)\\
  &\stackrel{(\ref{eq:5.16})}=&a\diamond_{p,qt}(b\diamond_{q,t}c).
 \end{eqnarray*}
 Thus $(\{A_{\varphi}\}_{\varphi\in\pi},\{\diamond_{p,q}\}_{p,q\in\pi})$ is a T-algebra.

 (2) can be obtained by Part (1) and Proposition \ref{pro:9.1}.
 \end{proof}

 \begin{rmk}\label{rmk:15.19} We can get many new T-algebra structures on $\mathcal{A}[\pi]$ by Proposition \ref{pro:5.5} and Example \ref{ex:15.12}.
 \end{rmk}

 \begin{ex}\label{ex:15.19a} (1) In general, according to Part (1) in Proposition \ref{pro:5.5}, commutative dendriform T-algebras induce commutative T-algebras. The following example comes from (3) in Example \ref{ex:15.12} which shows that noncommutative dendriform T-algebra can also induce commutative T-algebra.

 The new structure of T-algebra on $\mathcal{A}[\pi]$ (where $\mathcal{A}$ is given in Example \ref{ex:15.15}) can be defined by
 \begin{eqnarray*}
 &&u_{1}1_{\pi} \diamond  u_{1}q=u_{1}q \diamond  u_{1}1_{\pi}=u_{1}1_{\pi} \diamond  u_{2}q=u_{2}q \diamond  u_{1}1_{\pi} =u_{2}1_{\pi} \diamond  u_{1}q= u_{1}q \diamond  u_{2}1_{\pi}=u_{2}1_{\pi} \diamond  u_{2}q\\
 &&=u_{2}q \diamond  u_{2}1_{\pi}=u_{2}q \diamond  u_{2}q=u_{1}q \diamond  u_{2}q=u_{2}q \diamond  u_{1}q =-\lambda u_{2} q,\quad u_{1}q \diamond  u_{1}q=\lambda u_{1} q-2\lambda u_{2} q,\\
 &&u_{1}1_{\pi} \diamond  u_{1}1_{\pi}=-\lambda u_{1} 1_{\pi},~
   u_{1}1_{\pi} \diamond  u_{2}1_{\pi}=u_{2}1_{\pi} \diamond  u_{1}1_{\pi}=u_{2}1_{\pi} \diamond  u_{2}1_{\pi}=-\lambda u_{2} 1_{\pi}.
 \end{eqnarray*}

 (2) The following example comes from (4) in Example \ref{ex:15.12} which shows that noncommutative dendriform algebra can also induce noncommutative T-algebra.

 The new structure of T-algebra on $\mathcal{A}[\pi]$ (where $\mathcal{A}$ is given in Example \ref{ex:15.17}) can be defined by
 \begin{eqnarray*}
 &&u_{1}1_{\pi} \diamond  u_{1}q=u_{1}q \diamond  u_{1}1_{\pi}=u_{1}q \diamond  u_{1}q=u_{2}q \diamond  u_{2}q=-\lambda u_{1} q,\quad u_{1}1_{\pi} \diamond  u_{2}q=u_{2}q \diamond  u_{1}1_{\pi}\\
 &&=u_{1}q \diamond  u_{2}q=u_{2}q \diamond  u_{1}q= -\lambda u_{2} q,\quad u_{1}1_{\pi} \diamond  u_{3}q=u_{3}q \diamond  u_{1}1_{\pi}=-u_{2}1_{\pi} \diamond  u_{4}q=u_{4}q \diamond  u_{2}1_{\pi}\\
 &&=u_{1}q \diamond  u_{3}q=u_{3}q \diamond  u_{1}q=u_{2}q \diamond  u_{4}q=-u_{4}q \diamond  u_{2}q=-\lambda u_{3} q,\quad u_{1}1_{\pi} \diamond  u_{4}q=u_{4}q \diamond  u_{1}1_{\pi}\\
 &&=-u_{2}1_{\pi} \diamond  u_{3}q=u_{3}q \diamond  u_{2}1_{\pi}=u_{1}q \diamond  u_{4}q=u_{4}q \diamond  u_{1}q=u_{2}q \diamond  u_{3}q=-u_{3}q \diamond  u_{2}q=-\lambda u_{4} q,\\
 &&u_{2}1_{\pi} \diamond  u_{1}q=u_{1}q \diamond  u_{2}1_{\pi}= \lambda u_{2} q+p_{1}u_{3}q+\frac{p_{1}p_{2}}{\lambda+p_{2}}u_{4}q,\  u_{2}1_{\pi} \diamond  u_{2}q= \lambda u_{1} q-p_{3}u_{4}q-\frac{p_{1}p_{2}}{\lambda+p_{2}}u_{3}q,\\
 &&u_{3}1_{\pi} \diamond  u_{1}q=u_{1}q \diamond  u_{3}1_{\pi}= -(\lambda+p_{2})u_{3}q-p_{2}u_{4}q,\quad u_{2}q \diamond  u_{2}1_{\pi}= \lambda u_{1} q+p_{1}u_{4}q+\frac{p_{1}p_{2}}{\lambda+p_{2}}u_{3}q,\\
 &&u_{3}1_{\pi} \diamond  u_{2}q=-u_{2}q \diamond  u_{3}1_{\pi}= (\lambda+p_{2})u_{4}q+p_{2}u_{3}q,\qquad\  u_{1}1_{\pi} \diamond  u_{1}1_{\pi}= -\lambda u_{1} 1_{\pi},\\
 &&u_{4}1_{\pi} \diamond  u_{1}q=u_{1}q \diamond  u_{4}1_{\pi}= (\lambda+p_{2})u_{3}q+p_{2}u_{4}q,\qquad\quad u_{2}1_{\pi} \diamond  u_{2}1_{\pi}= 3\lambda u_{1} 1_{\pi},\\
 &&-u_{4}1_{\pi} \diamond  u_{2}q=u_{2}q \diamond  u_{4}1_{\pi}=  (\lambda+p_{2})u_{4}q+p_{2}u_{3}q,\qquad u_{2}1_{\pi} \diamond  u_{3}1_{\pi}= (\lambda-p_{2})u_{4}1_{\pi}+p_{2}u_{3}1_{\pi},\\
 &&u_{1}1_{\pi} \diamond  u_{2}1_{\pi}=u_{2}1_{\pi} \diamond  u_{1}1_{\pi}= \lambda u_{2} 1_{\pi}+p_{1}u_{3}1_{\pi}+\frac{p_{1}p_{2}}{\lambda+p_{2}}u_{4}1_{\pi},\\
 &&u_{1}1_{\pi} \diamond  u_{3}1_{\pi}=u_{3}1_{\pi} \diamond  u_{1}1_{\pi}= -(\lambda+p_{2})u_{3}1_{\pi}-p_{2}u_{4}1_{\pi},\\
 &&u_{2}1_{\pi} \diamond  u_{4}1_{\pi}=-u_{4}1_{\pi} \diamond  u_{2}1_{\pi}= (\lambda+p_{2})u_{4}1_{\pi}+(2\lambda+p_{2})u_{3}1_{\pi},\\
 &&u_{4}1_{\pi} \diamond  u_{1}1_{\pi}=u_{1}1_{\pi} \diamond  u_{4}1_{\pi}=(\lambda+p_{2})u_{3}1_{\pi}+p_{2}u_{4}1_{\pi},\quad u_{3}1_{\pi} \diamond  u_{2}1_{\pi}= (-\lambda+p_{2})u_{4}1_{\pi}+p_{2}u_{3}1_{\pi}.
 \end{eqnarray*}
 The operations for the remaining cases are 0.
 \end{ex}

 \begin{cor}\label{cor:9.3} Let $(A, R)$ be a Rota-Baxter T-algebra. If we define a new multiplication $\{\diamond_{p,q}:A_{p}\otimes A_{q}\lr A_{pq}\}_{p,q\in\pi}$ by
 \begin{eqnarray}
 &a \diamond_{p,q} b:=a \cdot_{p,q} R(b)+ R(a) \cdot_{p,q} b+\lambda a \cdot_{p,q} b,&\label{eq:9.3}
 \end{eqnarray}
 for $a\in A_{p}$, $b\in A_{q}$ and $p,q\in\pi$. Then $(\{A_{\varphi}\}_{\varphi\in \pi}, \{\diamond_{p,q}\}_{p,q\in \pi})$ is a T-algebra.
 \end{cor}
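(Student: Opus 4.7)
The plan is to deduce this corollary directly as a two-step combination of results already proved in the excerpt, rather than redoing any associativity calculation. First, I would invoke Proposition~\ref{pro:5.4}(1): since $(A,R)$ is a Rota-Baxter T-algebra, the family of vector spaces $\{A_\varphi\}_{\varphi\in\pi}$ carries a tridendriform T-algebra structure with
\begin{eqnarray*}
a\prec_{p,q}b := a\cdot_{p,q}R_q(b),\quad a\succ_{p,q}b := R_p(a)\cdot_{p,q}b,\quad a\bullet_{p,q}b := \lambda\, a\cdot_{p,q}b,
\end{eqnarray*}
for $a\in A_p$, $b\in A_q$. Then I would apply Proposition~\ref{pro:5.5}(2) to this tridendriform T-algebra: the sum operation $a\diamond_{p,q}b := a\prec_{p,q}b + a\bullet_{p,q}b + a\succ_{p,q}b$ defines a T-algebra structure. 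Substituting the three operations above recovers exactly the formula in Eq.~(\ref{eq:9.3}), so the conclusion follows.

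If a self-contained verification is preferred, one can instead check the T-algebra associativity $(a\diamond_{p,q}b)\diamond_{pq,t}c = a\diamond_{p,qt}(b\diamond_{q,t}c)$ by hand: expanding each side using Eq.~(\ref{eq:9.3}) produces nine terms, and the defining Rota-Baxter identity Eq.~(\ref{eq:5.39}) rewrites every occurrence of $R_{pq}(a)\cdot_{pq,t}R_t(b)$-type products so that the two sides collapse to the same expression after invoking the T-algebra associativity Eq.~(\ref{eq:5.37}). I do not anticipate any real obstacle here: the heavy lifting has already been carried out inside Propositions~\ref{pro:5.4} and \ref{pro:5.5}, where Eq.~(\ref{eq:5.39}) was precisely what packaged the Rota-Baxter identity into the tridendriform axioms, and Proposition~\ref{pro:5.5}(2) extracted ordinary associativity from the sum of the three tridendriform operations. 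The only thing to remark on is the mild notational point that ``$R$'' in Eq.~(\ref{eq:9.3}) should be read as $R_p$ and $R_q$ on the respective arguments, matching the family convention of Definition~\ref{de:5.14}.
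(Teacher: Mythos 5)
Your proposal is correct and is essentially the paper's own proof, which likewise just cites Proposition \ref{pro:5.4} together with Proposition \ref{pro:5.5}. (If anything, your pairing of Proposition \ref{pro:5.4}(1) with Proposition \ref{pro:5.5}(2) chains more cleanly than the paper's cited combination of Part (2) of each, since Part (2) of Proposition \ref{pro:5.5} takes a tridendriform input; the resulting sum of operations is the same either way.)
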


 \begin{proof}
 It follows from Part (2) in Proposition \ref{pro:5.5} and Part (2) in Proposition \ref{pro:5.4}.
 \end{proof}

 \begin{rmk} For a Rota-Baxter T-algebra $(A, R)$, we have $R_{p}(a)\cdot_{p,q} R_{q}(b)=R_{pq}( a\diamond_{p,q} b)$.
 \end{rmk}

 \section{Rota-Baxter Hopf T-(co)algebras}\label{se:rbhta} In this section, we introduce a class of bialgebraic structures (named Rota-Baxter Hopf T-(co)algebra) on Rota-Baxter T-(co)algebras.

 \subsection{Rota-Baxter Hopf T-algebras}
 \begin{defi}\label{de:5.16} Let $\pi$ be a semigroup. A Rota-Baxter T-algebra $(A, R)$ is a {\bf Rota-Baxter semi-Hopf T-algebra} denoted by $(A, \D, R)$ if every $\{A_{\varphi}\}_{\varphi\in\pi}$ is a coalgebra with comultiplication $\D_{\varphi}$ such that $\{\mu_{p,q}\}_{p,q\in\pi}$ are coalgebra maps and $R_{\varphi}$ is a Rota-Baxter cooperator (see \cite{E06,ML}), i.e.,
 \begin{eqnarray}
 &(R_{\varphi}\otimes R_{\varphi})\Delta_{\varphi}(a)=(R_{\varphi}\otimes \id_{A_\varphi})\Delta_{\varphi} R_{\varphi}(a)+(\id_{A_\varphi}\otimes R_{\varphi})\Delta_{\varphi} R_{\varphi}(a)+\lambda \Delta_{\varphi} R_{\varphi}(a),&\label{eq:5.40}
 \end{eqnarray}
 for all $a\in A_\varphi, \varphi\in \pi$. If, furthermore, $\pi$ is a monoid with unit 1, and Rota-Baxter T-algebra is unital such that $\eta: K\lr A_1$ is a coalgebra map, then we call Rota-Baxter semi-Hopf T-algebra {\bf unital}.

 Let $\pi$ be a group. A {\bf Rota-Baxter Hopf T-algebra} denoted by $(A, \D, R, S)$  is a unital Rota-Baxter semi-Hopf T-algebra together with a family of linear maps $\{S_{\varphi}:A_{\varphi}\lr A_{\varphi^{-1}}\}_{\varphi\in\pi}$ such that
 \begin{eqnarray}
 &\mu_{\varphi^{-1},\varphi}\circ (S_{\varphi}\otimes \id_{A_\varphi})\circ \Delta_{\varphi}=\eta \varepsilon_{\varphi}=\mu_{\varphi,\varphi^{-1}}\circ (\id_{A_\varphi}\otimes S_{\varphi})\circ \Delta_{\varphi},&\label{eq:5.41}\\
 &S_{\varphi}\circ R_{\varphi}=R_{\varphi^{-1}}\circ S_{\varphi}.&\label{eq:5.42}
 \end{eqnarray}
 \end{defi}

 \begin{rmk}\label{rmk:5.17} (1) Rota-Baxter Hopf T-algebra $(A, \D, R, S)$ includes a Hopf T-algebra $(A, \D, S)$ in \cite{Tu,Tu1}.

 (2) If $(\mathcal{A},\widetilde{\mu}, \widetilde{\eta},\widetilde{\Delta}, \widetilde{\varepsilon},\mathcal{S})$ is a Hopf algebra, then we call Rota-Baxter Hopf T-algebra $(\{A_{\varphi}=\mathcal{A}\}, \{\mu_{p,q}=\widetilde{\mu}\}, \{R_{\varphi}\}, \lambda, \{\Delta_{\varphi}=\widetilde{\Delta}\}, \{\varepsilon_{\varphi}=\widetilde{\varepsilon}\}, \{S_{\varphi}=\mathcal{S}\})$ a {\bf Rota-Baxter family Hopf algebra of weight $\lambda$}.

 (3) If $\pi=\{1\}$, then we call Rota-Baxter Hopf T-algebra $(A_{1},\mu_{1,1},R_{1}, \lambda, \Delta_{1}, \varepsilon_{1}, S_{1})$ is a {\bf $(R_1,R_1)$-Rota-Baxter Hopf algebra of weight $\l$}. We note here $(R_1, R_1)$-Rota-Baxter Hopf algebra of weight $\l$ is a $(R_1,R_1)$-Rota-Baxter bialgebra of weight $\l$ introduced in \cite{ML} together with antipode $S_1$ such that $S_1\circ R_1=R_1\circ S_1$.
 \end{rmk}

 \begin{ex}\label{ex:15.11} Let $\pi=\{1,q\}$ be a monoid with a unit $1$ and $q^{2}=q$.

 (1) Let $\mathcal{A}$ be the 2-dimensional algebra defined in Example \ref{ex:15.15}. For all $\varphi\in \pi$, define $\D_{\varphi}: \mathcal{A}\varphi\lr \mathcal{A}\varphi$ by $\D(h\varphi)=h\varphi\o h\varphi$, $\forall~h\in \mathcal{A}$. Then $(\mathcal{A}[\pi], \D, R)$ is a Rota-Baxter semi-Hopf T-algebra of weight $\lambda$ with $\{R_{\varphi}\}$ given by
 \begin{eqnarray*}
 &&R_{1_{\pi}}:\mathcal{A} 1_{\pi}  \longrightarrow \mathcal{A} 1_{\pi},~u_1 1_{\pi}  \longmapsto -\lambda u_1 1_{\pi},~u_2 1_{\pi}  \longmapsto 0 1_{\pi}\\ &&R_{q}:\mathcal{A} q \longrightarrow \mathcal{A} q, ~u_1 q  \longmapsto -\lambda u_1 q,~u_2 q  \longmapsto -\lambda u_2 q;
 \end{eqnarray*}
 \begin{eqnarray*}
 &&R_{1_{\pi}}:\mathcal{A} 1_{\pi}  \longrightarrow \mathcal{A} 1_{\pi},~u_1 1_{\pi}  \longmapsto 0 1_{\pi},~u_2 1_{\pi}  \longmapsto -\lambda u_2 1_{\pi}\\  &&R_{q}:\mathcal{A} q \longrightarrow \mathcal{A} q,~u_1 q  \longmapsto -\lambda u_1 q,~u_2 q  \longmapsto -\lambda u_2 q;
 \end{eqnarray*}
 \begin{eqnarray*}
 &&\quad R_{1_{\pi}}:\mathcal{A} 1_{\pi}  \longrightarrow \mathcal{A} 1_{\pi},~ u_1 1_{\pi}  \longmapsto -\lambda u_1 1_{\pi},~u_2 1_{\pi}  \longmapsto -\lambda u_2 1_{\pi}\\
 &&\quad R_{q}:\mathcal{A} q \longrightarrow \mathcal{A} q,~ u_1 q  \longmapsto 0 q,~u_2 q  \longmapsto -\lambda u_2 q.
 \end{eqnarray*}

 (2) Let $\mathcal{A}$ be the 3-dimensional algebra defined in Example \ref{ex:15.16}. For all $\varphi\in \pi$, define $\D_{\varphi}: \mathcal{A}\varphi\lr \mathcal{A}\varphi$ by $\D(h\varphi)=h\varphi\o h\varphi$, $\forall~h\in \mathcal{A}$. Then $(\mathcal{A}[\pi], \D, R)$ is a Rota-Baxter semi-Hopf T-algebra of weight $\lambda$ with $\{R_{\varphi}\}$ given by
 \begin{eqnarray*}
 &&R_{1_{\pi}}:\mathcal{A} 1_{\pi}  \longrightarrow \mathcal{A} 1_{\pi},~ u_1 1_{\pi}  \longmapsto -\lambda u_1 1_{\pi},~u_2 1_{\pi}  \longmapsto 0 1_{\pi},~u_3 1_{\pi}  \longmapsto 0 1_{\pi}\\
 &&R_{q}:\mathcal{A} q \longrightarrow \mathcal{A} q,~u_1 q  \longmapsto -\lambda u_1 q,~u_2 q  \longmapsto -\lambda u_2 q,~u_3 q  \longmapsto 0 q;
 \end{eqnarray*}
 \begin{eqnarray*}
 &&R_{1_{\pi}}:\mathcal{A} 1_{\pi}  \longrightarrow \mathcal{A} 1_{\pi},~u_1 1_{\pi}  \longmapsto -\lambda u_1 1_{\pi},~u_2 1_{\pi}  \longmapsto 0 1_{\pi},~u_3 1_{\pi}  \longmapsto 0 1_{\pi}\\
 &&R_{q}:\mathcal{A} q \longrightarrow \mathcal{A} q,~u_1 q  \longmapsto -\lambda u_1 q,,~u_2 q  \longmapsto -\lambda u_2 q,,~u_3 q  \longmapsto -\lambda u_3 q;
 \end{eqnarray*}
 \begin{eqnarray*}
 &&R_{1_{\pi}}:\mathcal{A} 1_{\pi}  \longrightarrow \mathcal{A} 1_{\pi},~u_1 1_{\pi}  \longmapsto 0 1_{\pi},~u_2 1_{\pi}  \longmapsto -\lambda u_2 1_{\pi},~u_3 1_{\pi}  \longmapsto 0 1_{\pi}\\
 &&R_{q}:\mathcal{A} q \longrightarrow \mathcal{A} q,~u_1 q  \longmapsto -\lambda u_1 q,~u_2 q  \longmapsto -\lambda u_2 q,~u_3 q  \longmapsto 0 q;
 \end{eqnarray*}
 \begin{eqnarray*}
 &&R_{1_{\pi}}:\mathcal{A} 1_{\pi}  \longrightarrow \mathcal{A} 1_{\pi},~u_1 1_{\pi}  \longmapsto 0 1_{\pi},~u_2 1_{\pi}  \longmapsto -\lambda u_2 1_{\pi},~u_3 1_{\pi}  \longmapsto 0 1_{\pi} \\
 &&R_{q}:\mathcal{A} q \longrightarrow \mathcal{A} q,~u_1 q  \longmapsto -\lambda u_1 q,~u_2 q  \longmapsto -\lambda u_2 q,~u_3 q  \longmapsto -\lambda u_3 q;
 \end{eqnarray*}
 \begin{eqnarray*}
 &&R_{1_{\pi}}:\mathcal{A} 1_{\pi}  \longrightarrow \mathcal{A} 1_{\pi},~u_1 1_{\pi}  \longmapsto 0 1_{\pi},~u_2 1_{\pi}  \longmapsto 0 1_{\pi},~u_3 1_{\pi}  \longmapsto -\lambda u_3 1_{\pi}\\
 &&R_{q}:\mathcal{A} q \longrightarrow \mathcal{A} q,~u_1 q  \longmapsto -\lambda u_1 q,~u_2 q  \longmapsto 0 q,~u_3 q  \longmapsto -\lambda u_3 q;
 \end{eqnarray*}
 \begin{eqnarray*}
 &&R_{1_{\pi}}:\mathcal{A} 1_{\pi}  \longrightarrow \mathcal{A} 1_{\pi},~u_1 1_{\pi}  \longmapsto 0 1_{\pi},~u_2 1_{\pi}  \longmapsto 0 1_{\pi},~u_3 1_{\pi}  \longmapsto -\lambda u_3 1_{\pi}\\
 &&R_{q}:\mathcal{A} q \longrightarrow \mathcal{A} q,~u_1 q  \longmapsto -\lambda u_1 q,~u_2 q  \longmapsto -\lambda u_2 q,~u_3 q  \longmapsto -\lambda u_3 q;
 \end{eqnarray*}
 \begin{eqnarray*}
 &&R_{1_{\pi}}:\mathcal{A} 1_{\pi}  \longrightarrow \mathcal{A} 1_{\pi},~u_1 1_{\pi}  \longmapsto -\lambda u_1 1_{\pi},~u_2 1_{\pi}  \longmapsto -\lambda u_2 1_{\pi},~u_3 1_{\pi}  \longmapsto 0 1_{\pi}\\
 &&R_{q}:\mathcal{A} q \longrightarrow \mathcal{A} q,~u_1 q  \longmapsto -\lambda u_1 q,~u_2 q  \longmapsto -\lambda u_2 q,~u_3 q  \longmapsto -\lambda u_3 q;
 \end{eqnarray*}
 \begin{eqnarray*}
 &&R_{1_{\pi}}:\mathcal{A} 1_{\pi}  \longrightarrow \mathcal{A} 1_{\pi},~u_1 1_{\pi}  \longmapsto 0 1_{\pi},~u_2 1_{\pi}  \longmapsto -\lambda u_2 1_{\pi},~u_3 1_{\pi}  \longmapsto -\lambda u_3 1_{\pi}\\
 &&R_{q}:\mathcal{A} q \longrightarrow \mathcal{A} q,~u_1 q  \longmapsto -\lambda u_1 q,~u_2 q  \longmapsto -\lambda u_2 q,~u_3 q  \longmapsto -\lambda u_3 q;
 \end{eqnarray*}
 \begin{eqnarray*}
 &&R_{1_{\pi}}:\mathcal{A} 1_{\pi}  \longrightarrow \mathcal{A} 1_{\pi},~u_1 1_{\pi}  \longmapsto -\lambda u_1 1_{\pi},~u_2 1_{\pi}  \longmapsto 0 1_{\pi},~u_3 1_{\pi}  \longmapsto -\lambda u_3 1_{\pi}\\
 &&R_{q}:\mathcal{A} q \longrightarrow \mathcal{A} q,~u_1 q  \longmapsto -\lambda u_1 q,~u_2 q  \longmapsto -\lambda u_2 q,~u_3 q  \longmapsto -\lambda u_3 q;
 \end{eqnarray*}
 \begin{eqnarray*}
 &&R_{1_{\pi}}:\mathcal{A} 1_{\pi}  \longrightarrow \mathcal{A} 1_{\pi},~u_1 1_{\pi}  \longmapsto -\lambda u_1 1_{\pi},~u_2 1_{\pi}  \longmapsto -\lambda u_2 1_{\pi},~u_3 1_{\pi}  \longmapsto 0 1_{\pi}\\
 &&R_{q}:\mathcal{A} q \longrightarrow \mathcal{A} q,~u_1 q  \longmapsto 0 q,~u_2 q  \longmapsto -\lambda u_2 q,~u_3 q  \longmapsto 0 q;
 \end{eqnarray*}
 \begin{eqnarray*}
 &&R_{1_{\pi}}:\mathcal{A} 1_{\pi}  \longrightarrow \mathcal{A} 1_{\pi},~u_1 1_{\pi}  \longmapsto 0 1_{\pi},~u_2 1_{\pi}  \longmapsto -\lambda u_2 1_{\pi},~u_3 1_{\pi}  \longmapsto -\lambda u_3 1_{\pi}\\
 &&R_{q}:\mathcal{A} q \longrightarrow \mathcal{A} q,~u_1 q  \longmapsto 0 q,~u_2 q  \longmapsto 0 q,~u_3 q  \longmapsto -\lambda u_3 q;
 \end{eqnarray*}
 \begin{eqnarray*}
 &&R_{1_{\pi}}:\mathcal{A} 1_{\pi}  \longrightarrow \mathcal{A} 1_{\pi},~u_1 1_{\pi}  \longmapsto -\lambda u_1 1_{\pi},~u_2 1_{\pi}  \longmapsto 0 1_{\pi},~u_3 1_{\pi}  \longmapsto -\lambda u_3 1_{\pi}\\
 &&R_{q}:\mathcal{A} q \longrightarrow \mathcal{A} q,~u_1 q  \longmapsto 0 q,~u_2 q  \longmapsto 0 q,~u_3 q  \longmapsto -\lambda u_3 q;
 \end{eqnarray*}
 \begin{eqnarray*}
 &&R_{1_{\pi}}:\mathcal{A} 1_{\pi}  \longrightarrow \mathcal{A} 1_{\pi},~u_1 1_{\pi}  \longmapsto -\lambda u_1  1_{\pi},~u_2 1_{\pi}  \longmapsto -\lambda u_2 1_{\pi},~u_3 1_{\pi}  \longmapsto -\lambda u_3 1_{\pi}\\
 &&R_{q}:\mathcal{A} q \longrightarrow \mathcal{A} q,~u_1 q  \longmapsto 0 q,~u_2 q  \longmapsto 0 q,~u_3 q  \longmapsto -\lambda u_3 q;
 \end{eqnarray*}
 \begin{eqnarray*}
 &&R_{1_{\pi}}:\mathcal{A} 1_{\pi}  \longrightarrow \mathcal{A} 1_{\pi},~u_1 1_{\pi}  \longmapsto -\lambda u_1 1_{\pi},~u_2 1_{\pi}  \longmapsto -\lambda u_2 1_{\pi},~u_3 1_{\pi}  \longmapsto -\lambda u_3 1_{\pi}\\
 &&R_{q}:\mathcal{A} q \longrightarrow \mathcal{A} q,~u_1 q  \longmapsto 0 q,~u_2 q  \longmapsto -\lambda u_2 q,~u_3 q  \longmapsto -\lambda u_3 q.
 \end{eqnarray*}

 (3) Let $\mathcal{A}$ be the 4-dimensional algebra defined in Example \ref{ex:15.17}. For all $\varphi\in \pi$, define $\D_{\varphi}: \mathcal{A}\varphi\lr \mathcal{A}\varphi$ by $\D(h\varphi)=h\varphi\o h\varphi$, $\forall~h\in \mathcal{A}$. Then $(\mathcal{A}[\pi], \D, R)$ is a Rota-Baxter semi-Hopf T-algebra of weight $\lambda$ with $\{R_{\varphi}\}$ given by
 \begin{eqnarray*}
 &&R_{1_{\pi}}:\mathcal{A} 1_{\pi}  \longrightarrow \mathcal{A} 1_{\pi},~u_1 1_{\pi}  \longmapsto 0 1_{\pi},~u_2 1_{\pi}  \longmapsto 0 1_{\pi},~u_3 1_{\pi}  \longmapsto -\lambda u_3 1_{\pi},~u_4 1_{\pi}  \longmapsto -\lambda u_4 1_{\pi}\\  &&R_{q}:\mathcal{A} q \longrightarrow \mathcal{A} q,~u_1 q  \longmapsto -\lambda u_1 q,~u_2 q  \longmapsto -\lambda u_2 q,~u_3 q  \longmapsto -\lambda u_3 q,~u_4 q  \longmapsto -\lambda u_4 q;
 \end{eqnarray*}
 \begin{eqnarray*}
 &&R_{1_{\pi}}:\mathcal{A} 1_{\pi}  \longrightarrow \mathcal{A} 1_{\pi},~u_1 1_{\pi}  \longmapsto -\lambda u_1  1_{\pi},~u_2 1_{\pi}  \longmapsto -\lambda u_2  1_{\pi},~u_3 1_{\pi}  \longmapsto 0 1_{\pi},~u_4 1_{\pi}  \longmapsto 0 1_{\pi}\\  &&R_{q}:\mathcal{A} q \longrightarrow \mathcal{A} q,~u_1 q  \longmapsto -\lambda u_1 q,~u_2 q  \longmapsto -\lambda u_2 q,~u_3 q  \longmapsto -\lambda u_3 q,~u_4 q  \longmapsto -\lambda u_4 q;
 \end{eqnarray*}
 \begin{eqnarray*}
 &&R_{1_{\pi}}:\mathcal{A} 1_{\pi}  \longrightarrow \mathcal{A} 1_{\pi},~u_1 1_{\pi}  \longmapsto -\lambda u_1  1_{\pi},~u_2 1_{\pi}  \longmapsto -\lambda u_2  1_{\pi},~u_3 1_{\pi}  \longmapsto -\lambda u_3 1_{\pi},~u_4 1_{\pi}  \longmapsto -\lambda u_4 1_{\pi}\\
 &&R_{q}:\mathcal{A} q \longrightarrow \mathcal{A} q ,~u_1 q  \longmapsto 0 q,~u_2 q  \longmapsto 0 q,~u_3 q  \longmapsto -\lambda u_3 q,~u_4 q  \longmapsto -\lambda u_4 q.
 \end{eqnarray*}
 \end{ex}

 \subsection{Rota-Baxter Hopf T-coalgebras}

 \begin{defi}\label{de:5.1} Let $\pi$ be a semigroup and $\gamma\in K$ be given. A {\bf T-coalgebra} \cite{Tu,Tu1} is a pair $(\{C_{\varphi}\}_{\varphi\in\pi}$, $\{\Delta_{p,q}\}_{p,q\in\pi})$, where $\{C_{\varphi}\}_{\varphi\in\pi}$ is a family of vector spaces together with a family of linear maps $\{\Delta_{p,q}: C_{pq} \lr C_{p}\otimes C_{q}\}_{p,q\in\pi}$ such that
 \begin{eqnarray}
 &(\Delta_{p,q}\otimes \id_{C_t})\Delta_{pq,t}=(\id_{C_p}\otimes \Delta_{q,t})\Delta_{p,qt},&\label{eq:5.1}
 \end{eqnarray}
 where $p, q, t\in \pi$. A {\bf Rota-Baxter T-coalgebra of weight $\gamma$ } is a quadruple $(\{C_{\varphi}\}_{\varphi\in\pi}$, $\{\Delta_{p,q}\}_{p,q\in\pi}$, $\{Q_{\varphi}\}_{\varphi\in\pi}, \gamma)\  (abbr.(C, Q))$, where $(\{C_{\varphi}\}_{\varphi\in\pi}, \{\Delta_{p,q}\}_{p,q\in\pi})$ is a T-coalgebra and $\{Q_{\varphi}: C_{\varphi}\lr C_{\varphi}\}_{\varphi\in\pi}$ is a family of linear maps such that
 \begin{eqnarray*}
 &(Q_{p}\otimes Q_{q})\Delta_{p,q}=(\id_{C_p}\otimes Q_{q})\Delta_{p,q}Q_{p,q}+(Q_{p}\otimes \id_{C_q})\Delta_{p,q}Q_{p,q}+\gamma\Delta_{p,q}Q_{p,q},&\label{eq:5.3}
 \end{eqnarray*}
 where $p, q\in \pi$.

 If moreover $\pi$ is a monoid with unit $1$ and there is a linear map $\varepsilon: C_{1} \lr K$ such that
 \begin{eqnarray}
 &(\id_{C_p}\otimes\varepsilon)\Delta_{p,1}=\id_{C_p}=(\varepsilon\otimes \id_{C_p})\Delta_{1,p},&\label{eq:5.2}
 \end{eqnarray}
 then we call $(C, Q)$ {\bf counital}.
 \end{defi}

 \begin{rmk} (1) If the semigroup $\pi$ contains a single element $e$, then Rota-Baxter T-coalgebra $(C_{1}, \D_{1,1}, Q_{1}, \g)$ is exactly the Rota-Baxter coalgebra of weight $\l$ introduced in \cite{E06} and studied in \cite{ML}.

 (2) If $(C,\D,\v)$ is a coassociative coalgebra, then Rota-Baxter T-coalgebra $(\{C_{\varphi}=C\}, \{\D_{p, q}=\D\}, \{Q_{\varphi}\}, \g)$ is called a {\bf  Rota-Baxter family coalgebra of weight $\g$}, which is a dual to the Rota-Baxter family algebra of weight $\l$ introduced in \cite[Proposition 9.1]{EFGBP}. 
 \end{rmk}

 The following definitions and two propositions are the corresponding parts above.

 \begin{defi}\label{de:5.6} Let $\pi$ be a semigroup. A {\bf dendriform T-coalgebra} is a family of vector spaces $\{C_{\varphi}\}_{\varphi\in\pi}$ with a family of binary operations $\{\Delta_{\prec p,q},\Delta_{\succ p,q}:C_{pq}\lr C_{p}\otimes C_{q}\}_{p,q\in\pi}$
 satisfying the following conditions (for all $p,q,t\in \pi$)
 \begin{eqnarray}
 &(\Delta_{\prec p,q}\otimes \id_{C_t})\Delta_{\prec pq,t}=(\id_{C_p}\otimes\Delta_{\prec q,t})\Delta_{\prec p,qt}+(\id_{C_p}\otimes\Delta_{\succ q,t})\Delta_{\prec p,qt},&\label{eq:5.18}\\
 &(\Delta_{\succ p,q}\otimes \id_{C_t})\Delta_{\prec pq,t}=(\id_{C_p}\otimes\Delta_{\prec q,t})\Delta_{\succ p,qt},&\label{eq:5.19}\\
 &(\Delta_{\prec p,q}\otimes \id_{C_t})\Delta_{\succ pq,t}+(\Delta_{\succ p,q}\otimes \id_{C_t})\Delta_{\succ pq,t}=(\id_{C_p}\otimes \Delta_{\succ q,t})\Delta_{\succ p,qt}.&\label{eq:5.20}
 \end{eqnarray}
 For simplicity, we denote it by $(\{C_{\varphi}\}_{\varphi\in\pi}, \{\Delta_{\prec p,q}\}_{p,q\in\pi}, \{\Delta_{\succ p,q}\}_{p,q\in\pi})$ (abbr. $(C,\Delta_{\prec}, \Delta_{\succ})$).
 \end{defi}

 \begin{rmk} If $\pi$ contains a single element $1$, then $(C_{1}, \Delta_{\prec p,q}=\Delta_{\prec 1,1}, \Delta_{\succ p,q}=\Delta_{\succ 1,1})$ is dendriform coalgebra in \cite{Fo}.
 \end{rmk}

 \begin{defi}\label{de:5.7} Let $\pi$ be a semigroup. A {\bf tridendriform T-coalgebra} is a family of vector spaces $\{C_{\varphi}\}_{\varphi\in\pi}$ with a family of binary operations $\{\Delta_{\prec p,q},\Delta_{\succ p,q},\Delta_{\bullet p,q}:C_{pq}\lr C_{p}\otimes C_{q}\}_{ p,q\in\pi}$
 satisfying the following conditions (for all $p,q,t\in \pi$)
 \begin{eqnarray}
 &(\Delta_{\prec p,q}\otimes \id_{C_t})\Delta_{\prec pq,t}=(\id_{C_p}\otimes\Delta_{\prec q,t})\Delta_{\prec p,qt}+(\id_{C_p}\otimes\Delta_{\succ q,t})\Delta_{\prec p,qt}+(\id_{C_p}\otimes\Delta_{\bullet q,t})\Delta_{\prec p,qt},\qquad &\label{eq:5.21}\\
 &(\Delta_{\succ p,q}\otimes \id_{C_t})\Delta_{\prec pq,t}=(\id_{C_p}\otimes\Delta_{\prec q,t})\Delta_{\succ p,qt},&\label{eq:5.22}\\
 &(\Delta_{\prec p,q}\otimes \id_{C_t})\Delta_{\succ pq,t}+(\Delta_{\succ p,q}\otimes \id_{C_t})\Delta_{\succ pq,t}+(\Delta_{\bullet p,q}\otimes \id_{C_t})\Delta_{\succ pq,t}=(\id_{C_p}\otimes \Delta_{\succ q,t})\Delta_{\succ p,qt},\qquad &\label{eq:5.23}\\
 &(\Delta_{\succ p,q}\otimes \id_{C_t})\Delta_{\bullet pq,t}=(\id_{C_p}\otimes \Delta_{\bullet q,t})\Delta_{\succ p,qt},&\label{eq:5.24}\\
 &(\Delta_{\prec p,q}\otimes \id_{C_t})\Delta_{\bullet pq,t}=(\id_{C_p}\otimes \Delta_{\succ q,t})\Delta_{\bullet p,qt},&\label{eq:5.25}\\
 &(\Delta_{\bullet p,q}\otimes \id_{C_t})\Delta_{\prec pq,t}=(\id_{C_p}\otimes \Delta_{\prec q,t})\Delta_{\bullet p,qt},&\label{eq:5.26}\\
 &(\Delta_{\bullet p,q}\otimes \id_{C_t})\Delta_{\bullet pq,t}=(\id_{C_p}\otimes \Delta_{\bullet q,t})\Delta_{\bullet p,qt}.&\label{eq:5.27}
 \end{eqnarray}
 For simplicity, we denote it by $(\{C_{\varphi}\}_{\varphi\in\pi}, \{\Delta_{\prec p,q}\}_{p,q\in\pi}, \{\Delta_{\succ p,q}\}_{p,q\in\pi},\{\Delta_{\bullet p,q}\}_{p,q\in\pi})$ (abbr. $(C,\Delta_{\prec}, \Delta_{\succ}, \Delta_{\bullet})$).
 \end{defi}

 \begin{rmk} If $\pi$ contains a single element $1$, then $(C_{1}, \Delta_{\prec p,q}=\Delta_{\prec 1,1}, \Delta_{\succ p,q}=\Delta_{\succ 1,1}, \Delta_{\bullet p,q}=\Delta_{\bullet 1,1})$ is tridendriform coalgebra in \cite{MLY}.
 \end{rmk}

 \begin{pro}\label{pro:5.8} Let $\pi$ be a semigroup. (1) A Rota-Baxter T-coalgebra $(C, Q)$ induces a dendriform T-coalgebra $(\{C_{\varphi}\}_{\varphi\in\pi}, \{\Delta_{\prec p,q}\}_{p,q\in\pi}, \{\Delta_{\succ p,q}\}_{p,q\in\pi})$, where
 \begin{eqnarray*}
 &\Delta_{\prec p,q}(c):=c_{(1,p)}\otimes Q_{q}(c_{(2,q)})+\gamma c_{(1,p)}\otimes c_{(2,q)},\  \Delta_{\succ p,q}(c):=Q_{p}(c_{(1,p)})\otimes c_{(2,q)},&
 \end{eqnarray*}
 here we write $\Delta_{p,q}(c)=c_{(1,p)}\otimes c_{(2,q)}$, for all $c \in C_{pq}$ and $p,q\in \pi$.

 (2) A Rota-Baxter T-coalgebra $(C, Q)$ induces a tridendriform T-coalgebra $(\{C_{\varphi}\}_{\varphi\in\pi}, \{\Delta_{\prec p,q}\}_{p,q\in\pi},$ $\{\Delta_{\succ p,q}\}_{p,q\in\pi},\{\Delta_{\bullet p,q}\}_{p,q\in\pi})$, where
 \begin{eqnarray*}
 &\Delta_{\prec p,q}(c):=c_{(1,p)}\otimes Q_{q}(c_{(2,q)}),\ \Delta_{\succ p,q}(c):=Q_{p}(c_{(1,p)})\otimes c_{(2,q)},\  \Delta_{\bullet p,q}(c):=\gamma c_{(1,p)}\otimes c_{(2,q)},&
 \end{eqnarray*}
 here we write $\Delta_{p,q}(c)=c_{(1,p)}\otimes c_{(2,q)}$, for all $c \in C_{pq}$ and $p,q\in \pi$.
 \end{pro}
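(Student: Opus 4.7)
The plan is to prove part (2) first by verifying each of the seven compatibility equations (\ref{eq:5.21})--(\ref{eq:5.27}) directly, and then to deduce part (1) from part (2) by a coalgebra analogue of Proposition \ref{pro:9.1}, namely by setting $\Delta_{\prec p,q}' := \Delta_{\prec p,q} + \Delta_{\bullet p,q}$ and $\Delta_{\succ p,q}' := \Delta_{\succ p,q}$. This mirrors the strategy for Proposition \ref{pro:5.4}: handle the finer (tridendriform) structure directly, then collapse to the dendriform case.

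The computations in part (2) rest on exactly two ingredients: the T-coalgebra coassociativity (\ref{eq:5.1}), which in Sweedler-style notation reads
\[
(c_{(1,pq)})_{(1,p)}\otimes (c_{(1,pq)})_{(2,q)}\otimes c_{(2,t)}=c_{(1,p)}\otimes (c_{(2,qt)})_{(1,q)}\otimes (c_{(2,qt)})_{(2,t)},
\]
and the Rota--Baxter coalgebra identity $(Q_p\otimes Q_q)\Delta_{p,q}=(\id\otimes Q_q)\Delta_{p,q}Q_{pq}+(Q_p\otimes \id)\Delta_{p,q}Q_{pq}+\gamma\,\Delta_{p,q}Q_{pq}$. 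Equations (\ref{eq:5.22}), (\ref{eq:5.24}), (\ref{eq:5.25}), (\ref{eq:5.26}), (\ref{eq:5.27}) only ever apply one $Q_{\varphi}$ on each side to the \emph{extremal} tensor factor, so each one reduces to a single invocation of coassociativity after unpacking the definitions. The two substantive equations are (\ref{eq:5.21}) and (\ref{eq:5.23}): for (\ref{eq:5.21}), unpacking the left-hand side and applying coassociativity rewrites it as $c_{(1,p)}\otimes (Q_q\otimes Q_t)\Delta_{q,t}(c_{(2,qt)})$, while unpacking the right-hand side produces $c_{(1,p)}\otimes\bigl[(\id\otimes Q_t)+(Q_q\otimes \id)+\gamma\bigr]\Delta_{q,t}Q_{qt}(c_{(2,qt)})$, and the Rota--Baxter coalgebra identity applied to $c_{(2,qt)}$ identifies the two. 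Equation (\ref{eq:5.23}) is dual in the sense that coassociativity is applied to the \emph{other} factor and the Rota--Baxter identity is used on $c_{(1,pq)}$ before coassociativity concludes the match.

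For part (1), once part (2) is established, the dendriform T-coalgebra axioms (\ref{eq:5.18})--(\ref{eq:5.20}) follow by a short combinatorial check using the substitutions $\Delta_{\prec}'=\Delta_{\prec}+\Delta_{\bullet}$, $\Delta_{\succ}'=\Delta_{\succ}$: expand both sides, then apply (\ref{eq:5.21}), (\ref{eq:5.25})--(\ref{eq:5.27}) to deduce (\ref{eq:5.18}); (\ref{eq:5.22}) and (\ref{eq:5.24}) to deduce (\ref{eq:5.19}); and (\ref{eq:5.23}) to deduce (\ref{eq:5.20}). No new structural input is needed beyond bilinearity of the $\otimes$. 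The main obstacle throughout is purely bookkeeping: in the T-coalgebra Sweedler calculus, each tensor factor carries a group index and one must consistently track which operator $Q_p$, $Q_q$, $Q_t$, or $Q_{qt}$, $Q_{pq}$ acts on which component, and in particular apply coassociativity (\ref{eq:5.1}) in the correct direction so that the $Q_{\varphi}$'s land on the right factors before invoking the Rota--Baxter identity. Once the indexing is disciplined, every verification collapses to a two-step substitution.
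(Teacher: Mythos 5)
Your proposal is correct. Note that the paper gives no proof of Proposition \ref{pro:5.8} at all (it is stated as ``the corresponding part'' of the algebra case), and your argument is precisely the intended one: it dualizes the paper's proof of Proposition \ref{pro:5.4} — verifying the tridendriform axioms from coassociativity plus the Rota--Baxter coalgebra identity, with (\ref{eq:5.21}) and (\ref{eq:5.23}) as the only two equations genuinely using that identity — and then obtains the dendriform case via the coalgebra analogue of Proposition \ref{pro:9.1}, with the bookkeeping of which tridendriform axioms feed which dendriform axiom exactly as you describe.
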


 \begin{pro}\label{pro:5.9} Let $\pi$ be a semigroup. (1) Let $(\{C_{\varphi}\}_{\varphi\in\pi}, \{\Delta_{\prec p,q}\}_{p,q\in\pi}, \{\Delta_{\succ p,q}\}_{p,q\in\pi})$ be a dendriform T-coalgebra. Then $(\{C_{\varphi}\}_{\varphi\in\pi},\{\Delta_{\diamond p,q}\}_{ p,q\in\pi})$ is a T-coalgebra, where $\{\Delta_{\diamond p,q}:C_{pq}\lr C_{p}\otimes C_{q}\}_{p,q\in\pi}$,
 \begin{eqnarray*}
 &\Delta_{\diamond p,q}(c):=\Delta_{\prec p,q}(c)+\Delta_{\succ p,q}(c),&
 \end{eqnarray*}
 for all $c \in C_{pq}$ and $p,q\in \pi$.

 (2) Let $(\{C_{\varphi}\}_{\varphi\in\pi}, \{\Delta_{\prec p,q}\}_{p,q\in\pi},\{\Delta_{\succ p,q}\}_{p,q\in\pi}, \{\Delta_{\bullet p,q}\}_{p,q\in\pi})$ be a tridendriform T-coalgebra. Then $(\{C_{\varphi}\}_{\varphi\in\pi},$ $\{\Delta_{\diamond p,q}\}_{ p,q\in\pi})$ is a T-coalgebra, where $\{\Delta_{\diamond p,q}:C_{pq}\lr C_{p}\otimes C_{q}\}_{p,q\in\pi}$,
 \begin{eqnarray*}
 &\Delta_{\diamond p,q}(c):=\Delta_{\prec p,q}(c)+\Delta_{\bullet p,q}(c)+\Delta_{\succ p,q}(c),&
 \end{eqnarray*}
 for all $c \in C_{pq}$ and $p,q\in \pi$.
 \end{pro}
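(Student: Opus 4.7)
The plan is to establish Proposition \ref{pro:5.9} as the coalgebraic mirror of Proposition \ref{pro:5.5}, reducing everything to a direct verification of the coassociativity condition \eqref{eq:5.1} for $\Delta_{\diamond}$.

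For Part (1), I would begin by expanding the left-hand side of \eqref{eq:5.1} using bilinearity:
\begin{eqnarray*}
(\Delta_{\diamond p,q}\otimes \id_{C_t})\Delta_{\diamond pq,t} &=& (\Delta_{\prec p,q}\otimes \id_{C_t})\Delta_{\prec pq,t}+(\Delta_{\succ p,q}\otimes \id_{C_t})\Delta_{\prec pq,t}\\
&& +(\Delta_{\prec p,q}\otimes \id_{C_t})\Delta_{\succ pq,t}+(\Delta_{\succ p,q}\otimes \id_{C_t})\Delta_{\succ pq,t}.
\end{eqnarray*}
Then I would apply \eqref{eq:5.18} to the first term, \eqref{eq:5.19} to the second term, and \eqref{eq:5.20} to the combination of the third and fourth terms. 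The outputs are respectively $(\id_{C_p}\otimes\Delta_{\prec q,t})\Delta_{\prec p,qt}+(\id_{C_p}\otimes\Delta_{\succ q,t})\Delta_{\prec p,qt}$, $(\id_{C_p}\otimes\Delta_{\prec q,t})\Delta_{\succ p,qt}$, and $(\id_{C_p}\otimes \Delta_{\succ q,t})\Delta_{\succ p,qt}$. Collecting and factoring by linearity gives $(\id_{C_p}\otimes\Delta_{\diamond q,t})\Delta_{\diamond p,qt}$, as desired.

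For Part (2), the cleanest route is to first prove the coalgebraic analog of Proposition \ref{pro:9.1}: given a tridendriform T-coalgebra $(C,\Delta_\prec,\Delta_\succ,\Delta_\bullet)$, the operations $\Delta'_{\prec p,q} := \Delta_{\prec p,q}+\Delta_{\bullet p,q}$ and $\Delta'_{\succ p,q} := \Delta_{\succ p,q}$ define a dendriform T-coalgebra structure. Verifying \eqref{eq:5.18} for the primed operations splits into four terms whose identification uses \eqref{eq:5.21}, \eqref{eq:5.25}, \eqref{eq:5.26}, and \eqref{eq:5.27}; \eqref{eq:5.19} reduces to \eqref{eq:5.22} together with \eqref{eq:5.24}; and \eqref{eq:5.20} reduces to \eqref{eq:5.23}. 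Once this dual of Proposition \ref{pro:9.1} is in place, Part (2) follows immediately from Part (1), since $\Delta_{\diamond p,q} = \Delta'_{\prec p,q}+\Delta'_{\succ p,q}$.

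No real obstacle is anticipated: the argument is a mechanical dualization of Proposition \ref{pro:5.5} together with its reduction via Proposition \ref{pro:9.1}. The only place where care is required is keeping the tensor positions straight, because the coalgebraic axioms \eqref{eq:5.18}--\eqref{eq:5.20} and \eqref{eq:5.21}--\eqref{eq:5.27} pair $\otimes \id$ on the outside with $\id\otimes$ on the inside in precisely the order opposite to the algebra side; once the correspondences are matched term by term, the proof is a pure accounting exercise requiring no additional hypothesis beyond the defining axioms already listed in Definitions \ref{de:5.6} and \ref{de:5.7}.
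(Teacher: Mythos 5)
Your proposal is correct: Part (1) follows by expanding $(\Delta_{\diamond}\otimes\id)\Delta_{\diamond}$ into four terms and applying \eqref{eq:5.18}--\eqref{eq:5.20}, and your groupings for the coalgebraic analogue of Proposition \ref{pro:9.1} (namely \eqref{eq:5.21}, \eqref{eq:5.25}--\eqref{eq:5.27} for the first axiom, \eqref{eq:5.22} with \eqref{eq:5.24} for the second, and \eqref{eq:5.23} for the third) all check out. The paper omits the proof entirely, presenting Proposition \ref{pro:5.9} as the formal dual of Propositions \ref{pro:5.5} and \ref{pro:9.1}, so your argument is exactly the dualization the authors intend.
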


 \begin{defi}\label{de:5.10} Let $\pi$ be a semigroup. A Rota-Baxter T-coalgebra is a {\bf Rota-Baxter semi-Hopf T-coalgebra} denoted by $(C, \mu, Q)$ if every $\{C_{\varphi}\}_{\varphi\in\pi}$ is an algebra with multiplication $\mu_{\varphi}$ such that $\{\Delta_{p,q}\}_{p,q\in\pi}$ are algebra maps and each $Q_{\varphi}$ is a Rota-Baxter operator, i.e.,
 \begin{eqnarray}
 &Q_{\varphi}(a)Q_{\varphi}(b)=Q_{\varphi}\big(Q_{\varphi}(a)b+aQ_{\varphi}(b)+\gamma ab\big),\ \forall a,b\in C_{\varphi}.&\label{eq:5.32}
 \end{eqnarray}
 If, moreover, $\pi$ is a monoid with unit 1, and Rota-Baxter T-coalgebra is counital such that $\varepsilon:C_1\lr K$ is an algebra map, then we call Rota-Baxter semi-Hopf T-coalgebra {\bf counital}.

 Let $\pi$ be a group. A {\bf Rota-Baxter Hopf T-coalgebra} denoted by $(C, \mu, Q, S)$  is a counital Rota-Baxter semi-Hopf T-coalgebra together with a family of linear maps $\{S_{\varphi}:C_{\varphi}\lr C_{\varphi^{-1}}\}_{\varphi\in\pi}$ such that
 \begin{eqnarray}
 &\mu_{\varphi}\circ(S_{\varphi}\otimes \id_{C_\varphi})\circ\Delta_{\varphi^{-1},\varphi}=\eta_{\varphi}\varepsilon=\mu_{\varphi}\circ(\id_{C_\varphi}\otimes S_{\varphi})\circ\Delta_{\varphi,\varphi^{-1}},&\label{eq:5.33}\\
 &Q_{\varphi^{-1}}\circ S_{\varphi}=S_{\varphi}\circ Q_{\varphi}.&\label{eq:5.34}
 \end{eqnarray}
 \end{defi}

 \begin{rmk} (1) Rota-Baxter Hopf T-coalgebra $(C, \mu, Q, S)$ includes a Hopf T-coalgebra $(\{C_{\varphi}\},$ $\{\D_{p,q}\}, \v, \mu_{\varphi}, \{\eta_{\varphi}\}, \varepsilon_{\varphi}, S_{\varphi})$ in \cite{Tu,Tu1}.

 (2) If $(A,\mu,\eta,\Delta,\varepsilon,S)$ is a Hopf algebra, then we call Rota-Baxter Hopf T-coalgebra $(\{A_{\varphi}=A\}, \{\D_{p,q}=\D\}, \{Q_{\varphi}\}, \v, \g, \{\mu_{\varphi}=\mu\}, \{\eta_{\varphi}=\eta\}, \{S_{\varphi}=S\})$ a {\bf co-Rota-Baxter family Hopf algebra of weight $\g$}.

 (3) If $\pi=\{1\}$, then the Rota-Baxter Hopf T-coalgebra $(\{C_{1}\}, \{\D_{1}\}, \{Q_{1}\}, \v, \g, \{\mu_{1}\}, \{\eta_{1}\}, \{S_{1}\})$ is a $(Q_1, Q_1)$-Rota-Baxter Hopf algebra of weight $\g$.
 \end{rmk}

 \section{Other algebraic structures related to Rota-Baxter T-algebras}\label{se:pi} For the possible applications of subsequent research, in this section, we present T-versions of other classical algebraic structures related to Rota-Baxter T-algebras  and also consider the relations among them. If the semigroup $\pi$ is trivial, then T-versions of these algebraic structures cover the classical cases. For example, the pre-Lie T-algebra $(A_{1}, \ast_{1,1})$ is exactly the pre-Lie algebra in the usual case. From now on we assume that the $\pi$ is a commutative semigroup.

 \begin{defi}\label{de:6.5}  A {\bf commutative} T-algebra is a T-algebra $(\{A_{\varphi}\}_{\varphi\in\pi},$ $\{\mu_{p,q}\}_{p,q\in\pi})$ satisfying
 \begin{eqnarray}
 &a\cdot_{p,q} b=b\cdot_{q,p} a,&\label{eq:6.6}
 \end{eqnarray}
 for $a\in A_{p}$, $b\in A_{q}$, and $p,q \in \pi$.
 \end{defi}

 \subsection{Pre-Lie T-algebras from dendriform T-algebras }
 Pre-Lie algebras are also called Vinberg algebras, as they appear under the name ``left-symmetric algebras" in E. B. Vinberg \cite{Vin}'s work on the classification of homogeneous cones. Independently M. Gerstenhaber \cite{Ger} introduced the same structure under the name ``pre-Lie algebras" in the work on Hochschild cohomology and deformations of algebras. For a survey about pre-Lie algebra, see \cite{Man}.

 \begin{defi}\label{de:6.1} A {\bf pre-Lie T-algebra} is a family of vector spaces $\{A_{\varphi}\}_{\varphi\in \pi}$ with a family of binary operations $\{\ast_{p,q}:A_{p}\otimes A_{q}\lr A_{pq}\}_{p,q\in\pi}$ such that
 \begin{eqnarray}
 &a\ast_{p,qt}(b\ast_{q,t} c)-(a\ast_{p,q} b)\ast_{pq,t} c = b\ast_{q,pt}(a\ast_{p,t} c)-(b\ast_{q,p} a)\ast_{qp,t} c,&\label{eq:6.1}
 \end{eqnarray}
 for $a\in A_{p}$, $b\in A_{q}$, $c\in A_{t}$ and $p,q,t\in \pi$. We denote it by $(\{A_{\varphi}\}_{\varphi\in\pi},\{\ast_{p,q}\}_{p,q\in\pi})$.
 \end{defi}

 \begin{pro}\label{pro:6.2} Let $(\{A_{\varphi}\}_{\varphi\in\pi}, \{\prec_{p,q}\}_{p,q\in\pi}, \{\succ_{p,q}\}_{p,q\in\pi})$ be a dendriform T-algebra. Then $(\{A_{\varphi}\}_{\varphi\in\pi},$ $\{\ast_{p,q}\}_{p,q\in\pi})$ is a pre-Lie T-algebra, where $\{\ast_{p,q}:A_{p}\otimes A_{q}\lr A_{pq}\}_{p,q\in\pi}$ defined by
 \begin{eqnarray}
 &a \ast_{p,q} b := a \succ_{p,q}b-b \prec_{q,p}a,&\label{eq:6.2}
 \end{eqnarray}
 for $a\in A_{p}$, $b\in A_{q}$ and $p,q\in \pi$.
 \end{pro}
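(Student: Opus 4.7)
The plan is to verify the pre-Lie identity \eqref{eq:6.1} by expanding both sides using the definition \eqref{eq:6.2} of $\ast_{p,q}$ and simplifying with the three dendriform axioms \eqref{eq:5.4}--\eqref{eq:5.6}. The commutativity of $\pi$ (declared at the start of Section~\ref{se:pi}) is essential so that indices such as $pq$ and $qp$, or $qt$ and $tq$, can be identified whenever the axioms have to be applied with the three letters $a,b,c$ reshuffled.

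First I would expand
\[
a\ast_{p,qt}(b\ast_{q,t} c) - (a\ast_{p,q} b)\ast_{pq,t} c
\]
by writing each $\ast$ as $\succ - \prec$. This produces eight terms of the form $(\text{a product of two of } \succ,\prec)$ applied to $a,b,c$ in various orders. I would then apply axiom \eqref{eq:5.5} to rewrite every ``mixed'' term (i.e., one $\succ$ nested inside a $\prec$ with $a$ on the outside) in a canonical form, apply axiom \eqref{eq:5.4} to the single term of type $(c\prec b)\prec a$ appearing, and apply axiom \eqref{eq:5.6} to the single term of type $(a\succ b)\succ c$ that appears. After these reductions, the left-hand side collapses to a short expression involving the terms
\[
-a\succ_{p,qt}(c\prec_{t,q}b),\quad -(b\succ_{q,t}c)\prec_{qt,p}a,\quad c\prec_{t,pq}(a\succ_{p,q}b),\quad c\prec_{t,pq}(b\succ_{q,p}a),
\]
together with three residual terms built from $(a\prec_{p,q}b)\succ_{pq,t}c$, $(b\succ_{q,p}a)\succ_{pq,t}c$, and $b\succ_{q,pt}(a\succ_{p,t}c)$.

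Next I would do the same expansion for the right-hand side, which is obtained from the left-hand side by the swap $(a,p)\leftrightarrow(b,q)$. Because $\pi$ is commutative, every index equality needed to carry over the identical manipulation is automatic (for example, $pq=qp$, $pqt = qpt$, etc.). The four ``canonical'' terms displayed above appear in the resulting simplification with the same sign, and therefore cancel between the two sides. What remains is the difference
\[
\bigl[(a\prec_{p,q}b+a\succ_{p,q}b)\succ_{pq,t}c - a\succ_{p,qt}(b\succ_{q,t}c)\bigr] + \bigl[b\succ_{q,pt}(a\succ_{p,t}c) - (b\prec_{q,p}a+b\succ_{q,p}a)\succ_{qp,t}c\bigr],
\]
and each bracket vanishes by axiom \eqref{eq:5.6}.

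The main obstacle is purely bookkeeping: there is no conceptual difficulty, but one must be meticulous to track the indices on every occurrence of $\succ$ and $\prec$, and to keep the three variables $a,b,c$ straight through two symmetric expansions. Organizing the calculation so that the four symmetric terms are identified first (and cancel) before invoking \eqref{eq:5.6} on the two residual brackets makes the proof clean; this is the structure I would present.
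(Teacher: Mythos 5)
Your proposal is correct and takes essentially the same route as the paper's proof: expand both sides of \eqref{eq:6.1} using \eqref{eq:6.2}, reduce with the dendriform axioms \eqref{eq:5.4}--\eqref{eq:5.6}, and use commutativity of $\pi$ to identify indices; the paper merely organizes the bookkeeping differently, reducing each side separately to the same six-term expression instead of cancelling the four symmetric terms across the two sides and grouping the remainder into two instances of \eqref{eq:5.6}. One small slip in your narrative: the terms $(b\succ_{q,p}a)\succ_{pq,t}c$ and $b\succ_{q,pt}(a\succ_{p,t}c)$ that you attribute to the reduced left-hand side actually arise from the right-hand side, but your final displayed difference is correct (up to an overall sign) and each bracket does vanish by \eqref{eq:5.6}.
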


 \begin{proof}
 For $a\in A_{p}$, $b\in A_{q}$, $c\in A_{t}$ and $p,q,t\in \pi$, we have
 \begin{eqnarray*}
 &&a\ast_{p,qt}(b\ast_{q,t} c)-(a\ast_{p,q} b)\ast_{pq,t} c\\
 &&\qquad\qquad\stackrel{(\ref{eq:6.2})}=
 a\succ_{p,qt}(b\succ_{q,t} c-c\prec_{t,q} b)-(b\succ_{q,t} c-c\prec_{t,q} b)\prec_{qt,p}a\\
 &&\qquad\qquad\qquad-(a\succ_{p,q} b-b\prec_{q,p} a)\succ_{pq,t}c+c\prec_{t,pq}(a\succ_{p,q} b-b\prec_{q,p} a)\\
 &&\qquad\quad\stackrel{(\ref{eq:5.4})-(\ref{eq:5.6})}=
 (a\prec_{p,q}b)\succ_{pq,t} c+(a\succ_{p,q}b)\succ_{pq,t} c-a\succ_{p,tq}(c\prec_{t,q} b)-b\succ_{q,tp} (c\prec_{t,p} a)\\
 &&\qquad\qquad\qquad+c\prec_{t,qp} (b\prec_{q,p}a)+c\prec_{t,qp} (b\succ_{q,p}a)-(a\succ_{p,q} b)\succ_{pq,t}c+(b\prec_{q,p} a)\succ_{qp,t}c\\
 &&\qquad\qquad\qquad+c\prec_{t,pq}(a\succ_{p,q} b)-c\prec_{t,qp}(b\prec_{q,p} a)\\
 &&\qquad\qquad\quad=(a\prec_{p,q}b)\succ_{pq,t} c-a\succ_{p,tq}(c\prec_{t,q} b)-b\succ_{q,tp} (c\prec_{t,p} a)+c\prec_{t,qp} (b\succ_{q,p}a)\\
 &&\qquad\qquad\qquad+(b\prec_{q,p} a)\succ_{qp,t}c+c\prec_{t,pq}(a\succ_{p,q} b).
 \end{eqnarray*}
 Similarly, we have
 \begin{eqnarray*}
 &&b\ast_{q,pt}(a\ast_{p,t} c)-(b\ast_{q,p} a)\ast_{qp,t} c=(b\prec_{q,p}a)\succ_{qp,t} c-b\succ_{q,tp}(c\prec_{t,p} a)\\
 &&-a\succ_{p,tq} (c\prec_{t,q} b)+c\prec_{t,pq} (a\succ_{p,q}b)+(a\prec_{p,q} b)\succ_{pq,t}c+c\prec_{t,qp}(b\succ_{q,p} a).
 \end{eqnarray*}
 Hence $(\{A_{\varphi}\}_{\varphi\in\pi},\{\ast_{p,q}\}_{p,q\in\pi})$ is a pre-Lie T-algebra.
 \end{proof}

 \begin{rmk}\label{rmk:15.20} New constructions of pre-Lie T-algebra can be obtained by Proposition \ref{pro:6.2} and Example \ref{ex:15.12}.
 \end{rmk}

 \begin{ex}\label{ex:15.20a} (1) In general, according to Proposition \ref{pro:6.2}, commutative dendriform T-algebras induce commutative pre-Lie T-algebras. The following example comes from (3) in Example \ref{ex:15.12} which shows that noncommutative dendriform T-algebra can also induce commutative (i.e., $a\ast_{p,q} b=b\ast_{q,p} a)$ pre-Lie T-algebra.

 The new structure of pre-Lie T-algebra on $\mathcal{A}[\pi]$ (where $\mathcal{A}$ is given in Example \ref{ex:15.15}) can be defined by 
 \begin{eqnarray*}
 && u_{1}1_{\pi} \ast  u_{1}q=u_{1}q \ast  u_{1}1_{\pi}=u_{1}q \ast  u_{1}q= -\lambda u_{1} q,\quad u_{1}1_{\pi} \ast  u_{2}q=u_{2}q \ast  u_{1}1_{\pi}=u_{2}1_{\pi} \ast  u_{1}q\\
 &&=u_{1}q \ast  u_{2}1_{\pi}=u_{2}1_{\pi} \ast  u_{2}q=u_{2}q \ast  u_{2}1_{\pi}= -\lambda u_{2} q,u_{2}q \ast  u_{2}q=u_{1}q \ast  u_{2}q=u_{2}q \ast  u_{1}q= -\lambda u_{2} q,\\
 &&u_{1}1_{\pi} \ast  u_{2}1_{\pi}=u_{2}1_{\pi} \ast  u_{1}1_{\pi}=u_{2}1_{\pi} \ast  u_{2}1_{\pi}=-\lambda u_{2} 1_{\pi}, \quad u_{1}1_{\pi} \ast  u_{1}1_{\pi}= -\lambda u_{1} 1_{\pi}.
 \end{eqnarray*}

 (2) The following example comes from (4) in Example \ref{ex:15.12} which shows that noncommutative dendriform T-algebra can also induce noncommutative pre-Lie T-algebra.

 The new structure of pre-Lie T-algebra on $\mathcal{A}[\pi]$ (where $\mathcal{A}$ is given in Example \ref{ex:15.17}) can be defined by 
 \begin{eqnarray*}
 &&u_{1}1_{\pi} \ast  u_{1}q=u_{1}q \ast  u_{1}1_{\pi}=u_{2}q \ast  u_{2}1_{\pi}=u_{1}q \ast  u_{1}q=u_{2}q \ast  u_{2}q=-\lambda u_{1} q,\quad u_{1}1_{\pi} \ast  u_{2}q\\
 &&=u_{2}q \ast  u_{1}1_{\pi}=u_{2}1_{\pi} \ast  u_{1}q=u_{1}q \ast  u_{2}1_{\pi}=u_{1}q \ast  u_{2}q=u_{2}q \ast  u_{1}q= -\lambda u_{2} q,\quad u_{1}1_{\pi} \ast  u_{3}q\\
 &&=u_{3}q \ast  u_{1}1_{\pi}=-u_{4}q \ast  u_{2}1_{\pi}=u_{3}1_{\pi} \ast  u_{1}q=u_{1}q \ast  u_{3}1_{\pi}=u_{2}q \ast  u_{4}1_{\pi}=u_{1}q \ast  u_{3}q=u_{3}q \ast  u_{1}q\\
 &&=u_{2}q \ast  u_{4}q=-u_{4}q \ast  u_{2}q= -\lambda u_{3} q,\quad u_{1}1_{\pi} \ast  u_{4}q=u_{4}q \ast  u_{1}1_{\pi}=-u_{3}q \ast  u_{2}1_{\pi}=u_{2}q \ast  u_{3}1_{\pi}\\
 &&=u_{4}1_{\pi} \ast  u_{1}q=u_{1}q \ast  u_{4}1_{\pi}=u_{1}q \ast  u_{4}q=u_{4}q \ast  u_{1}q=u_{2}q \ast  u_{3}q=-u_{3}q \ast  u_{2}q= -\lambda u_{4} q,\quad\\
 &&u_{2}1_{\pi} \ast  u_{2}q=-\lambda u_{1} q-p_{3}u_{4}q-p_{1}u_{4}q-\frac{2p_{1}p_{2}}{\lambda+p_{2}}u_{3}q,~u_{2}1_{\pi} \ast  u_{3}q= 3\lambda u_{4} q,\\
 &&u_{2}1_{\pi} \ast  u_{4}q= 3 \lambda u_{3} q,~u_{3}1_{\pi} \ast  u_{2}q=(\lambda+2p_{2})u_{4}q+2p_{2}u_{3}q,~u_{1}1_{\pi} \ast  u_{1}1_{\pi}=-\lambda u_{1} 1_{\pi},\\
 &&u_{4}1_{\pi} \ast  u_{2}q=-2(\lambda+p_{2})u_{4}q-(\lambda+2p_{2}) u_{3} q,~u_{1}1_{\pi} \ast  u_{2}1_{\pi}=u_{2}1_{\pi} \ast  u_{1}1_{\pi}= -\lambda u_{2} 1_{\pi},\\
 &&u_{1}1_{\pi} \ast  u_{3}1_{\pi}=u_{3}1_{\pi} \ast  u_{1}1_{\pi}= -\lambda u_{3} 1_{\pi},\quad\
  u_{1}1_{\pi} \ast  u_{4}1_{\pi}=u_{4}1_{\pi} \ast  u_{1}1_{\pi}= -\lambda u_{4} 1_{\pi},\\
 &&u_{2}1_{\pi} \ast  u_{2}1_{\pi}=-\lambda u_{1} 1_{\pi}- 2p_{1}u_{4}1_{\pi}- \frac{2p_{1}p_{2}}{\lambda+p_{2}}u_{3}1_{\pi},\quad u_{2}1_{\pi} \ast  u_{3}1_{\pi}=  3\lambda u_{4} 1_{\pi},~u_{3}1_{\pi} \ast  u_{2}1_{\pi}\\
 &&=(\lambda+2p_{2})u_{4}1_{\pi},~u_{2}1_{\pi} \ast  u_{4}1_{\pi}=3\lambda u_{3} 1_{\pi},~u_{4}1_{\pi} \ast u_{2}1_{\pi}=-2(\lambda+p_{2})u_{4}1_{\pi}-(\lambda+2p_{2})u_{3}1_{\pi}.
 \end{eqnarray*}
 The operations for the remaining cases are 0.
 \end{ex}

 \subsection{Lie T-algebras from T-algebras }

 \begin{defi}\label{de:6.3} A {\bf Lie T-algebra} is a family of vector spaces $\{A_{\varphi}\}_{\varphi\in \pi}$ together with a family of binary operations $\{[,]_{p,q}: A_{p} \otimes A_{q} \lr A_{pq}\}_{p,q\in \pi}$, such that
 \begin{eqnarray}
 &[a,b]_{p,q}+[b,a]_{q,p}=0,&\label{eq:6.3}\\
 &[[a,b]_{p,q},c]_{pq,t}+[[b,c]_{q,t},a]_{qt,p}+[[c,a]_{t,p},b]_{tp,q}=0&\label{eq:6.4}
 \end{eqnarray}
 for $a\in A_{p}$, $b\in A_{q}$, $c\in A_{t}$, and $p,q,t\in \pi$. We denote it by $(\{A_{\varphi}\}_{\varphi\in\pi}, \{[,]_{p,q}\}_{p,q\in\pi})$ (abbr. $(A, [,])$).
 \end{defi}

 \begin{pro}\label{pro:6.20} Let $(\{A_{\varphi}\}_{\varphi\in\pi}, \{\mu_{p,q}\}_{p,q\in\pi})$ be a T-algebra. Then $(A, [,])$ is a Lie T-algebra, where $\{[,]_{p,q}:A_{p}\otimes A_{q}\lr A_{pq}\}_{p,q\in\pi}$
 \begin{eqnarray}
 & [a,b]_{p,q}:=a\cdot_{p,q}b - b\cdot_{q,p} a&\label{eq:7.30}
 \end{eqnarray}
 for $a\in A_{p}$, $b\in A_{q}$ and $p, q\in\pi$.
 \end{pro}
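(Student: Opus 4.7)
The plan is to verify the two axioms of a Lie T-algebra directly from the definition of the bracket in Eq.~(\ref{eq:7.30}), using only associativity (\ref{eq:5.37}) and the fact that $\pi$ is commutative (so $pq = qp$, $qt = tq$, $tp = pt$, and all subscripts that differ only by a permutation of the same product of three elements label the same operation).

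First, I would handle antisymmetry (\ref{eq:6.3}). For $a\in A_p$ and $b\in A_q$, one simply computes
\[
[a,b]_{p,q}+[b,a]_{q,p} = (a\cdot_{p,q}b - b\cdot_{q,p}a) + (b\cdot_{q,p}a - a\cdot_{p,q}b) = 0,
\]
so the bracket is antisymmetric on the nose; no hypothesis beyond the definition is needed here.

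The main work is the Jacobi identity (\ref{eq:6.4}). I would expand $[[a,b]_{p,q},c]_{pq,t}$ by the definition, obtaining four terms
\[
a\cdot_{p,q}b\cdot_{pq,t}c \;-\; b\cdot_{q,p}a\cdot_{qp,t}c \;-\; c\cdot_{t,pq}(a\cdot_{p,q}b) \;+\; c\cdot_{t,qp}(b\cdot_{q,p}a),
\]
and then apply associativity (\ref{eq:5.37}) to rewrite each as a right-normed product: the first becomes $a\cdot_{p,qt}(b\cdot_{q,t}c)$, the third becomes $(c\cdot_{t,p}a)\cdot_{tp,q}b$, and similarly for the other two. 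Doing the same for the two cyclic permutations $[[b,c]_{q,t},a]_{qt,p}$ and $[[c,a]_{t,p},b]_{tp,q}$ produces twelve terms in total. Using commutativity of $\pi$ to identify products like $pqt = qpt = ptq = \cdots$, these twelve terms pair up (each monomial appearing once with a $+$ sign and once with a $-$ sign in a different bracket), and the whole sum collapses to zero.

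There is no real obstacle here — the argument is pure bookkeeping. The only thing requiring care is tracking the many subscripts under cyclic permutation of $(p,q,t)$ and making sure the commutativity assumption on $\pi$ is invoked precisely where needed to equate subscripts such as $\cdot_{pq,t}$ with $\cdot_{qp,t}$ or $\cdot_{t,pq}$ with $\cdot_{pq,t}$ at the level of indexing, so that associativity may be applied. Once the six summands arising from the ``left'' halves and the six from the ``right'' halves of each bracket are written in a common right-normed form, cancellation is immediate.
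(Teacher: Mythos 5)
Your proposal is correct and follows essentially the same route as the paper's proof: antisymmetry is immediate from the definition, and the Jacobi identity is verified by expanding the three double brackets into twelve terms and cancelling them in pairs via the associativity axiom \eqref{eq:5.37} (commutativity of $\pi$ being needed only so that the various $A_{pqt}$, $A_{qtp}$, etc.\ coincide and the identity makes sense). No substantive difference from the paper's argument.
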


 \begin{proof} For all $a\in A_{p}$, $b\in A_{q}$, $c\in A_{t}$ and $p,q,t\in \pi$, we have
 \begin{eqnarray*}
 [a,b]_{p,q}+[b,a]_{q,p}=a\cdot_{p,q} b-b\cdot_{q,p} a+b\cdot_{q,p} a-a\cdot_{p,q} b=0
 \end{eqnarray*}
 and
 \begin{eqnarray*}
 [[a,b]_{p,q},c]_{pq,t}
 &\stackrel{(\ref{eq:7.30})}=&(a \cdot_{p,q} b- b \cdot_{q,p} a)\cdot_{pq,t} c-c\cdot_{t,pq}(a \cdot_{p,q} b- b \cdot_{q,p} a)\\
 &=&(a \cdot_{p,q} b)\cdot_{pq,t} c- (b \cdot_{q,p} a)\cdot_{qp,t} c-c\cdot_{t,pq}(a \cdot_{p,q} b)+ c\cdot_{t,qp}(b \cdot_{q,p} a).
 \end{eqnarray*}
 Similarly, we can obtain
 \begin{eqnarray*}
 [[b,c]_{q,t},a]_{qt,p}=(b \cdot_{q,t} c)\cdot_{qt,p} a- (c \cdot_{t,q} b)\cdot_{tq,p} a-a\cdot_{p,qt}(b \cdot_{q,t} c)+ a\cdot_{p,tq}(c \cdot_{t,q} b)
 \end{eqnarray*}
 and
 \begin{eqnarray*}
 [[c,a]_{t,p},b]_{tp,q}=(c \cdot_{t,p} a)\cdot_{tp,q} b- (a \cdot_{p,t} c)\cdot_{pt,q} b-b\cdot_{q,tp}(c \cdot_{t,p} a)+ b\cdot_{q,pt}(a \cdot_{p,t} c).
 \end{eqnarray*}
 By Eq.(\ref{eq:5.37}), we have $[[a,b]_{p,q},c]_{pq,t}+[[b,c]_{q,t},a]_{qt,p}+[[c,a]_{t,p},b]_{tp,q}=0$.
 Thus, $(A, [,])$ is a Lie T-algebra.
 \end{proof}

 \begin{rmk}\label{rmk:15.21} Based on Proposition \ref{pro:6.20} and Remark \ref{rmk:15.19}, we can get many new constructions of Lie T-algebra.
 \end{rmk}

 \begin{ex}\label{ex:15.21a} According to Example \ref{ex:15.19a} (2) and Proposition \ref{pro:6.20}, by Eq.(\ref{eq:6.3}), the nonzero operations of Lie T-algebra on $\mathcal{A}[\pi]$ (where $\mathcal{A}$ is given in Example \ref{ex:15.17}) can be defined by 
 \begin{eqnarray*}
 &&\ [u_{2}1_{\pi}, u_{2}q]= -p_{3}u_{4}q-p_{1}u_{4}q-\frac{2p_{1}p_{2}}{\lambda+p_{2}}u_{3}q,~ [u_{2}1_{\pi}, u_{3}q]= 2\lambda u_{4} q=-[u_{2}q, u_{3}q],~ [u_{2}1_{\pi}, u_{4}q]\\
 &&= 2\lambda u_{3} q=-[u_{2}q, u_{4}q],~[u_{3}1_{\pi}, u_{2}q]= 2(\lambda+p_{2})u_{4}q+2p_{2}u_{3}q,~ [u_{4}1_{\pi}, u_{2}q]= -2(\lambda+p_{2})u_{4}q\\
 &&-2p_{2}u_{3}q,[u_{2}1_{\pi}, u_{3}1_{\pi}]=2(\lambda-p_{2})u_{4}1_{\pi},~
  [u_{2}1_{\pi}, u_{4}1_{\pi}]= 2(\lambda+p_{2})u_{4}1_{\pi}+2(2\lambda+p_{2})u_{3}1_{\pi}.
 \end{eqnarray*}
 \end{ex}

 \subsection{Lie T-algebras from pre-Lie T-algebras }

 \begin{pro}\label{pro:6.15} Let $(\{A_{\varphi}\}_{\varphi\in\pi}, \{\ast_{p,q}\}_{p,q\in\pi})$ be a pre-Lie T-algebra. Define
 \begin{eqnarray}
 & [a,b]_{p,q}:=a\ast_{p,q}b - b\ast_{q,p} a,&\label{eq:6.20}
 \end{eqnarray}
 for $a\in A_{p}$, $b\in A_{q}$ and $p,q\in\pi$. Then $(\{A_{\varphi}\}_{\varphi\in\pi}, \{[,]_{p,q}\}_{p,q\in\pi})$ is a Lie T-algebra.
 \end{pro}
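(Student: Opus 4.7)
The plan is to verify the two axioms of Definition~\ref{de:6.3} for the bracket defined by Eq.~(\ref{eq:6.20}). Antisymmetry (\ref{eq:6.3}) is immediate from the definition:
\[
[a,b]_{p,q} + [b,a]_{q,p} = (a\ast_{p,q}b - b\ast_{q,p}a) + (b\ast_{q,p}a - a\ast_{p,q}b) = 0,
\]
and requires no use of the pre-Lie T-algebra axiom.

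For the Jacobi identity (\ref{eq:6.4}), I would first rearrange Eq.~(\ref{eq:6.1}) into the equivalent ``left-symmetry of the associator'' form
\[
(a\ast_{p,q}b)\ast_{pq,t}c - a\ast_{p,qt}(b\ast_{q,t}c) = (b\ast_{q,p}a)\ast_{qp,t}c - b\ast_{q,pt}(a\ast_{p,t}c).
\]
Next, I would expand each of the three double brackets in (\ref{eq:6.4}) using (\ref{eq:6.20}), producing twelve terms in total: six of the form $(x\ast_{i,j}y)\ast_{ij,k}z$ coming from the outer bracket on the left, and six of the form $z\ast_{k,ij}(x\ast_{i,j}y)$ coming from the outer bracket on the right.

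Then I would apply the rearranged pre-Lie identity above, together with its two cyclic analogues obtained by the substitutions $(a,b,c,p,q,t)\mapsto(b,c,a,q,t,p)$ and $(a,b,c,p,q,t)\mapsto(c,a,b,t,p,q)$, to replace each of the three ``outer-bracket-left'' pairs by a difference of two terms of the form $x\ast_{i,jk}(y\ast_{j,k}z)$. A direct termwise comparison then shows that each of the resulting twelve terms cancels with exactly one other term of opposite sign. Summing the three expanded cyclic brackets therefore gives zero, establishing (\ref{eq:6.4}).

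The argument presents no conceptual difficulty — it is the graded analogue of the classical pre-Lie $\Rightarrow$ Lie construction — and the only mild obstacle is index bookkeeping. Specifically, one must invoke the commutativity of the semigroup $\pi$ (assumed at the start of Section~\ref{se:pi}) to identify $A_{pq}$ with $A_{qp}$, $A_{qt}$ with $A_{tq}$, and $A_{tp}$ with $A_{pt}$, so that, for example, a term like $c\ast_{t,pq}(a\ast_{p,q}b)$ produced from $[[a,b]_{p,q},c]_{pq,t}$ is correctly matched with the term $c\ast_{t,pq}(a\ast_{p,q}b)$ produced (via the rearranged pre-Lie identity applied to the cyclic analogue) from $[[c,a]_{t,p},b]_{tp,q}$. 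Once this identification is made, the cancellation pattern is automatic.
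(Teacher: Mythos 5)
Your proposal is correct and follows essentially the same route as the paper: antisymmetry is immediate from the definition, and the Jacobi identity is obtained by expanding the three double brackets into twelve terms and cancelling them via Eq.~(\ref{eq:6.1}) and its two cyclic permutations (the paper simply writes ``By Eq.~(\ref{eq:6.1})'' where you spell out the cancellation pattern). Your remark that the commutativity of $\pi$ is needed for the index spaces to match is accurate and consistent with the standing assumption made at the start of Section~\ref{se:pi}.
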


 \begin{proof} For all $a\in A_{p}$, $b\in A_{q}$, $c\in A_{t}$ and $p,q,t\in \pi$, we calculate as follows.
 \begin{eqnarray*}
 [a,b]_{p,q}+[b,a]_{q,p}=a\ast_{p,q} b- b\ast_{q,p} a+b\ast_{q,p} a-a\ast_{p,q} b=0.
 \end{eqnarray*}
 Further,
 \begin{eqnarray*}
 [[a,b]_{p,q},c]_{pq,t}
 &\stackrel{(\ref{eq:6.20})}=&(a \ast_{p,q} b- b \ast_{q,p} a)\ast_{pq,t} c-c\ast_{t,pq}(a \ast_{p,q} b- b \ast_{q,p} a)\\
 &=&(a \ast_{p,q} b)\ast_{pq,t} c- (b \ast_{q,p} a)\ast_{qp,t} c-c\ast_{t,pq}(a \ast_{p,q} b)+ c\ast_{t,qp}(b \ast_{q,p} a).
 \end{eqnarray*}
 Likewise, we can obtain
 \begin{eqnarray*}
 [[b,c]_{q,t},a]_{qt,p}=(b \ast_{q,t} c)\ast_{qt,p} a- (c \ast_{t,q} b)\ast_{tq,p} a-a\ast_{p,qt}(b \ast_{q,t} c)+ a\ast_{p,tq}(c \ast_{t,q} b)
 \end{eqnarray*}
 and
 \begin{eqnarray*}
 [[c,a]_{t,p},b]_{tp,q}=(c \ast_{t,p} a)\ast_{tp,q} b- (a \ast_{p,t} c)\ast_{pt,q} b-b\ast_{q,tp}(c \ast_{t,p} a)+ b\ast_{q,pt}(a \ast_{p,t} c).
 \end{eqnarray*}
 By Eq.(\ref{eq:6.1}), we have $[[a,b]_{p,q},c]_{pq,t}+[[b,c]_{q,t},a]_{qt,p}+[[c,a]_{t,p},b]_{tp,q}=0$. Thus, $(\{A_{\varphi}\}_{\varphi\in\pi}$, $\{[,]_{p,q}\}_{p,q\in\pi})$ is a Lie T-algebra.
 \end{proof}

 \begin{rmk}\label{rmk:15.22} According to Proposition \ref{pro:6.15} and Remark \ref{rmk:15.20}, we can get a series of constructions of Lie T-algebra. 
 \end{rmk}

 \subsection{Pre-Lie T-algebras from Rota-Baxter Lie T-algebras } Next we consider the Rota-Baxter T-operators on Lie T-algebras.

 \begin{defi}\label{de:6.4} Let $\lambda \in K $. A {\bf Rota-Baxter Lie T-algebra of weight $\lambda$} is a Lie T-algebra $(\{A_{\varphi}\}_{\varphi\in\pi}, \{[,]_{p,q}\}_{p,q\in\pi})$ endowed with a family of linear maps $\{R_{\varphi}: A_{\varphi}\lr A_{\varphi}\}_{\varphi\in\pi}$, subject to the relation
 \begin{eqnarray}
 &[R_{p}(a),R_{q}(b)]_{p,q} = R_{pq}([a, R_{q}(b)]_{p,q}) + R_{pq}([R_{p}(a), b]_{p,q})+ \lambda R_{pq}([a,b]_{p,q}),&\label{eq:6.5}
 \end{eqnarray}
 for $a\in A_{p}$, $b\in A_{q}$ and $p,q \in \pi$. We denote it by $(\{A_{\varphi}\}_{\varphi\in\pi}, \{[,]_{p,q}\}_{p,q\in\pi}, \{R_{\varphi}\}_{\varphi\in\pi},\lambda)$.
 \end{defi}

 \begin{pro}\label{pro:6.11} Let $(\{A_{\varphi}\}_{\varphi\in\pi}, \{[,]_{p,q}\}_{p,q\in\pi}, \{R_{\varphi}\}_{\varphi\in\pi})$ be a Rota-Baxter Lie T-algebra of weight $0$. Then $(\{A_{\varphi}\}_{\varphi\in\pi}, \{\ast_{p,q}\}_{p,q\in\pi})$ is a pre-Lie T-algebra, where
 \begin{eqnarray}
 & a\ast_{p,q}b=[R_{p}(a),b]_{p,q}.&\label{eq:6.15}
 \end{eqnarray}
 \end{pro}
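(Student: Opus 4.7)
The plan is to verify the pre-Lie T-algebra identity (\ref{eq:6.1}) directly by expanding both sides via the definition $a\ast_{p,q}b=[R_p(a),b]_{p,q}$ and reducing using the three available tools: the weight-$0$ Rota-Baxter identity (\ref{eq:6.5}), the Jacobi identity (\ref{eq:6.4}), and antisymmetry together with commutativity of $\pi$ (so that $pq=qp$, etc.).

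First I would compute the left-hand side. The term $a\ast_{p,qt}(b\ast_{q,t}c)$ unfolds immediately to $[R_p(a),[R_q(b),c]_{q,t}]_{p,qt}$. For the second term $(a\ast_{p,q}b)\ast_{pq,t}c = [R_{pq}([R_p(a),b]_{p,q}),c]_{pq,t}$, I would apply the weight-$0$ Rota-Baxter identity in the form
\begin{eqnarray*}
R_{pq}([R_p(a),b]_{p,q}) \;=\; [R_p(a),R_q(b)]_{p,q} \;-\; R_{pq}([a,R_q(b)]_{p,q})
\end{eqnarray*}
to rewrite it. Thus the left-hand side becomes
\begin{eqnarray*}
[R_p(a),[R_q(b),c]_{q,t}]_{p,qt} \;-\; [[R_p(a),R_q(b)]_{p,q},c]_{pq,t} \;+\; [R_{pq}([a,R_q(b)]_{p,q}),c]_{pq,t}.
\end{eqnarray*}

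Next, I would apply the Jacobi identity (\ref{eq:6.4}) to the triple $(R_p(a),R_q(b),c)$ and use antisymmetry to collapse the first two terms:
\begin{eqnarray*}
[R_p(a),[R_q(b),c]_{q,t}]_{p,qt}-[[R_p(a),R_q(b)]_{p,q},c]_{pq,t} \;=\; [R_q(b),[R_p(a),c]_{p,t}]_{q,pt},
\end{eqnarray*}
which is exactly $b\ast_{q,pt}(a\ast_{p,t}c)$. Parallel to the first step, I would then handle the remaining term $[R_{pq}([a,R_q(b)]_{p,q}),c]_{pq,t}$ by first using antisymmetry and commutativity of $\pi$ (so $R_{pq}([a,R_q(b)]_{p,q})=-R_{qp}([R_q(b),a]_{q,p})$) and then applying the Rota-Baxter identity (\ref{eq:6.5}) a second time, now with $R_q(b)$ in the left slot. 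This shows the leftover term equals $-(b\ast_{q,p}a)\ast_{qp,t}c$, matching the right-hand side of (\ref{eq:6.1}).

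The main obstacle will not be any conceptual difficulty but rather the careful bookkeeping of indices: each bracket carries two subscripts, and the repeated use of antisymmetry $[x,y]_{r,s}=-[y,x]_{s,r}$ together with $pq=qp$ forces constant attention to the order of the arguments. As long as the commutativity assumption on $\pi$ (in force throughout Section \ref{se:pi}) is used to freely reorder products of indices, the chain of identities closes cleanly and yields (\ref{eq:6.1}).
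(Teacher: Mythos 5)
Your proposal is correct and follows essentially the same route as the paper: unfold the definition, apply the weight-$0$ identity \eqref{eq:6.5} once to rewrite $R_{pq}([R_p(a),b]_{p,q})$, then use the Jacobi identity \eqref{eq:6.4} together with antisymmetry \eqref{eq:6.3} to produce $b\ast_{q,pt}(a\ast_{p,t}c)$, and match the leftover term with $-(b\ast_{q,p}a)\ast_{qp,t}c$. The one inaccuracy is in your last step: after the antisymmetry rewrite, the leftover term $-[R_{qp}([R_q(b),a]_{q,p}),c]_{qp,t}$ is \emph{already} equal to $-(b\ast_{q,p}a)\ast_{qp,t}c$ purely by the definition \eqref{eq:6.15}, since $(b\ast_{q,p}a)\ast_{qp,t}c=[R_{qp}([R_q(b),a]_{q,p}),c]_{qp,t}$; no second application of \eqref{eq:6.5} is needed, and actually performing one would re-expand the term into $-[[R_q(b),R_p(a)]_{q,p},c]_{qp,t}+[R_{qp}([b,R_p(a)]_{q,p}),c]_{qp,t}$ and take you away from the target rather than toward it. Dropping that extra step, your argument coincides with the paper's proof.
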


 \begin{proof} For all $a\in A_{p}$, $b\in A_{q}$, $c\in A_{t}$ and $p,q,t\in \pi$, we can check the result as follows.
 \begin{eqnarray*}
 a \ast_{p,qt} (b \ast_{q,t} c)-(a \ast_{p,q} b) \ast_{pq,t} c
 &\stackrel{(\ref{eq:6.15})}=&[R_{p}(a), [R_{q}(b), c]_{q,t}]_{p,qt}-[R_{pq}([R_{p}(a),b]_{p,q}),c]_{pq,t}\\
 &\stackrel{(\ref{eq:6.5})}=&[R_{p}(a), [R_{q}(b),c]_{q,t}]_{p,qt}+[R_{pq}([a, R_{q}(b)]_{p,q}),c]_{pq,t}\\
 &&-[[R_{p}(a), R_{q}(b)]_{p,q}, c]_{pq,t}\\
 &\stackrel{(\ref{eq:6.3})}=&[R_{p}(a), [R_{q}(b),c]_{q,t}]_{p,qt}-[R_{pq}([ R_{q}(b),a]_{q,p}),c]_{qp,t}\\
 &&+[c,[R_{p}(a),R_{q}(b)]_{p,q}]_{t,pq}\\
 &\stackrel{(\ref{eq:6.4})}=&-[R_{q}(b),[c,R_{p}(a)]_{t,p}]_{q,tp}-[R_{qp}([R_{q}(b), a]_{q,p}), c]_{qp,t}\\
 &\stackrel{(\ref{eq:6.3})}=&[R_{q}(b), [R_{p}(a),c]_{p,t}]_{q,pt}-[R_{qp}([R_{q}(b), a]_{q,p}), c]_{qp,t}\\
 &\stackrel{(\ref{eq:6.15})}=&b \ast_{q,pt} (a \ast_{p,t} c)-(b \ast_{q,p} a) \ast_{qp,t} c,
 \end{eqnarray*}
 finishing the proof.
 \end{proof}

 \subsection{Dendriform T-algebra and Zinbiel T-algebra} Categorical properties of Leibniz algebras is studied by J.-L. Loday in \cite{LO95} and considered in this connection a new object-Zinbiel algebra, which is related to the Koszul dual to the category of Leibniz algebras.

 \begin{defi}\label{de:6.6} A {\bf Zinbiel T-algebra} is a family of vector spaces $\{A_{\varphi}\}_{\varphi\in \pi}$ together with a family of binary operations $\{\star_{p,q}: A_{p} \otimes A_{q} \lr A_{pq}\}_{p,q\in \pi}$ such that
 \begin{eqnarray}
 &a\star_{p,qt}(b\star_{q,t} c)=(a\star_{p,q} b)\star_{pq,t} c+(b\star_{q,p} a)\star_{qp,t}c&\label{eq:6.7}
 \end{eqnarray}
 for $a\in A_{p}$, $b\in A_{q}$, $c\in A_{t}$ and $p,q,t\in \pi$. We denote it by $(\{A_{\varphi}\}_{\varphi\in \pi}, \{\star_{p,q}\}_{p,q\in \pi})$.
 \end{defi}

 \begin{pro}\label{pro:6.7} Let $(\{A_{\varphi}\}_{\varphi\in\pi}, \{\prec_{p,q}\}_{p,q\in\pi}, \{\succ_{p,q}\}_{p,q\in\pi})$ be a commutative dendriform T-algebra in the sense of
 \begin{eqnarray}
 &a \succ_{p,q} b = b \prec_{q,p}a,&\label{eq:6.8}
 \end{eqnarray}
 define
 \begin{eqnarray}
 &a \star_{p,q} b := a \succ_{p,q}b=b \prec_{q,p}a,& \label{eq:6.9}
 \end{eqnarray}
 for $a\in A_{p}$, $b\in A_{q}$ and $p,q\in\pi$. Then $(\{A_{\varphi}\}_{\varphi\in \pi}, \{\star_{p,q}\}_{p,q\in \pi})$ is a Zinbiel T-algebra.
 \end{pro}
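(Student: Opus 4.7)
The plan is to verify the Zinbiel T-algebra identity (\ref{eq:6.7}) by a direct expansion of both sides using the definition $a\star_{p,q}b=a\succ_{p,q}b$, applying exactly one of the dendriform T-algebra axioms, and invoking the commutativity hypothesis to convert the resulting $\prec$-term into a $\succ$-term. The standing assumption of Section~\ref{se:pi} that $\pi$ is a commutative semigroup is used tacitly to identify $A_{pq}$ with $A_{qp}$.

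First I would rewrite the left-hand side purely in terms of $\succ$, i.e. $a\star_{p,qt}(b\star_{q,t}c)=a\succ_{p,qt}(b\succ_{q,t}c)$, and then apply Eq.(\ref{eq:5.6}) to obtain
\begin{eqnarray*}
a\succ_{p,qt}(b\succ_{q,t}c)=(a\prec_{p,q}b+a\succ_{p,q}b)\succ_{pq,t}c.
\end{eqnarray*}
Next, by swapping the roles of $(a,p)$ and $(b,q)$ in the commutativity relation $a\succ_{p,q}b=b\prec_{q,p}a$, I derive the companion identity $a\prec_{p,q}b=b\succ_{q,p}a$, and substitute it into the displayed expression to get $(b\succ_{q,p}a+a\succ_{p,q}b)\succ_{pq,t}c$.

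For the right-hand side, the definition of $\star$ yields $(a\succ_{p,q}b)\succ_{pq,t}c+(b\succ_{q,p}a)\succ_{qp,t}c$; since $\pi$ is commutative, $pq=qp$, so the second term lives in the same space and this sum coincides with the rewritten left-hand side. I do not anticipate a real obstacle: the whole argument is essentially forced once one notices that (\ref{eq:5.6}) is the unique axiom producing a single $\succ$-product on the right and that the commutativity hypothesis is precisely what converts the spurious $\prec$-summand into the $(b\succ_{q,p}a)$-summand appearing on the right of (\ref{eq:6.7}). The only care required is keeping track of the subscripts, which the commutativity of $\pi$ renders harmless.
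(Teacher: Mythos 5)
Your proof is correct and follows essentially the same route as the paper's: both rewrite $a\star_{p,qt}(b\star_{q,t}c)$ as $a\succ_{p,qt}(b\succ_{q,t}c)$, apply Eq.~(\ref{eq:5.6}), and use the commutativity relation (\ref{eq:6.8}) to identify $(a\prec_{p,q}b)\succ_{pq,t}c$ with $(b\star_{q,p}a)\star_{qp,t}c$. The paper merely leaves this last conversion implicit, whereas you spell it out, including the use of $pq=qp$.
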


 \begin{proof} Since $(\{A_{\varphi}\}_{\varphi\in\pi}, \{\prec_{p,q}\}_{p,q\in\pi}, \{\succ_{p,q}\}_{p,q\in\pi})$ is a dendriform T-algebra, then we have
 \begin{eqnarray*}
 a\succ_{p,qt}(b\succ_{q,t} c)=(a\prec_{p,q} b+a\succ_{p,q} b)\succ_{pq,t}c.
 \end{eqnarray*}
 Hence
 \begin{eqnarray*}
 a\star_{p,qt}(b\star_{q,t} c)=(a\star_{p,q} b)\star_{pq,t} c+(b\star_{q,p} a)\star_{qp,t}c,
 \end{eqnarray*}
 finishing the proof.
 \end{proof}

 \begin{cor}\label{cor:6.8} (1) Let $(A, R)$ be a Rota-Baxter T-algebra. Then $(\{A_{\varphi}\}_{\varphi\in \pi},$ $\{\ast_{p,q}\}_{p,q\in \pi})$ is a pre-Lie T-algebra, where
 \begin{eqnarray}
 &a \ast_{p,q} b:=R_{p}(a) \cdot_{p,q} b- b \cdot_{q,p} R_{p}(a)-\lambda b \cdot_{q,p} a,&\label{eq:6.10}
 \end{eqnarray}
 for $a\in A_{p}$, $b\in A_{q}$ and $p,q\in\pi$.

 (2) Let $(A, R)$ be a Rota-Baxter T-algebra such that $R_{p}(a) \cdot_{p,q} b=b \cdot_{q,p} R_{p}(a)+\lambda b \cdot_{q,p} a$. Then $(\{A_{\varphi}\}_{\varphi\in \pi}, \{\star_{p,q}\}_{p,q\in \pi})$ is a Zinbiel T-algebra, where
 \begin{eqnarray}
 &a \star_{p,q} b:=R_{p}(a) \cdot_{p,q} b&\label{eq:6.11}
 \end{eqnarray}
 for $a\in A_{p}$, $b\in A_{q}$ and $p,q\in\pi$.
 \end{cor}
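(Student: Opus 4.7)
The plan is to prove both parts by composition of already-established propositions: combine Proposition \ref{pro:5.4}(2), which turns a Rota-Baxter T-algebra into a dendriform T-algebra, with Proposition \ref{pro:6.2} for part (1) and Proposition \ref{pro:6.7} for part (2).

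For part (1), I would start from the Rota-Baxter T-algebra $(A,R)$ and invoke Proposition \ref{pro:5.4}(2) to obtain the induced dendriform T-algebra $(\{A_\varphi\},\{\prec_{p,q}\},\{\succ_{p,q}\})$ with
\[
a\prec_{p,q}b = a\cdot_{p,q}R_q(b)+\lambda a\cdot_{p,q}b,\qquad a\succ_{p,q}b = R_p(a)\cdot_{p,q}b.
\]
Then I would apply Proposition \ref{pro:6.2} to this dendriform T-algebra, so that the operation $a\ast_{p,q}b := a\succ_{p,q}b - b\prec_{q,p}a$ defines a pre-Lie T-algebra. Substituting the explicit formulas gives
\[
a\ast_{p,q}b = R_p(a)\cdot_{p,q}b - b\cdot_{q,p}R_p(a) - \lambda b\cdot_{q,p}a,
\]
which is exactly Eq.(\ref{eq:6.10}).

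For part (2), I would again begin with the dendriform T-algebra produced by Proposition \ref{pro:5.4}(2). The extra hypothesis $R_p(a)\cdot_{p,q}b = b\cdot_{q,p}R_p(a)+\lambda b\cdot_{q,p}a$ is, after translating through the definitions of $\succ_{p,q}$ and $\prec_{q,p}$, nothing but the identity $a\succ_{p,q}b = b\prec_{q,p}a$, i.e., the commutativity condition (\ref{eq:6.8}). Hence Proposition \ref{pro:6.7} applies directly, and the resulting operation $a\star_{p,q}b := a\succ_{p,q}b = R_p(a)\cdot_{p,q}b$ defines a Zinbiel T-algebra, matching Eq.(\ref{eq:6.11}).

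There is no real obstacle here: both statements reduce to checking that the given operation $\ast$ (resp.\ $\star$) agrees, after substitution, with the operation produced by the cited propositions. The only minor point to verify is the dictionary for part (2), namely that the assumed identity is precisely the commutativity of the induced dendriform structure; once this is observed, Proposition \ref{pro:6.7} closes the argument.
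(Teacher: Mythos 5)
Your proposal is correct and follows exactly the paper's own proof: the paper also derives both parts by composing Proposition \ref{pro:5.4}(2) with Proposition \ref{pro:6.2} (for part (1)) and with Proposition \ref{pro:6.7} (for part (2)). The only addition you make is to spell out the substitution and the observation that the hypothesis in part (2) is precisely the commutativity condition (\ref{eq:6.8}) of the induced dendriform T-algebra, which the paper leaves implicit.
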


 \begin{proof} (1) It follows from Proposition \ref{pro:6.2} and Part (2) in Proposition \ref{pro:5.4}.

 (2) It follows from Proposition \ref{pro:6.7}  and Part (2) in Proposition \ref{pro:5.4}.
 \end{proof}

 \begin{lem}\label{lem:6.12} Let $(\{A_{\varphi}\}_{\varphi\in\pi}, \{\star_{p,q}\}_{p,q\in\pi})$ be a Zinbiel T-algebra. Then
 \begin{eqnarray}
 &a\star_{p,qt}(b\star_{q,t} c)=b\star_{q,pt}(a\star_{p,t} c)&\label{eq:6.17}
 \end{eqnarray}
 where $a\in A_{p}$, $b\in A_{q}$, $c\in A_{t}$ and $p,q,t\in \pi$.
 \end{lem}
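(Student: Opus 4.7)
The plan is to apply the Zinbiel T-algebra axiom \eqref{eq:6.7} to both sides of \eqref{eq:6.17} and observe that the two resulting expansions coincide once we use commutativity of $\pi$ (which is in force from the start of Section \ref{se:pi}).

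More precisely, first I would apply \eqref{eq:6.7} directly to the left-hand side with $(a,b,c)\in A_p\times A_q\times A_t$ to obtain
\[
a\star_{p,qt}(b\star_{q,t} c) = (a\star_{p,q} b)\star_{pq,t} c+(b\star_{q,p} a)\star_{qp,t}c.
\]
Next I would apply \eqref{eq:6.7} to the right-hand side, but now with the triple $(b,a,c)\in A_q\times A_p\times A_t$ playing the role of $(a,b,c)$; this yields
\[
b\star_{q,pt}(a\star_{p,t} c) = (b\star_{q,p} a)\star_{qp,t} c+(a\star_{p,q} b)\star_{pq,t}c.
\]

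Finally I would invoke commutativity of $\pi$ to identify $pq=qp$, so the two right-hand sides above are literally the same sum of two terms, just written in opposite order. Comparing gives \eqref{eq:6.17}. There is no real obstacle here — the only subtlety is that \eqref{eq:6.17} would fail in general without commutativity of $\pi$, since otherwise the subscripts $pq,t$ and $qp,t$ on $\star$ would label genuinely different operations; the lemma is thus a direct symmetry consequence of the Zinbiel axiom combined with the standing commutativity hypothesis on $\pi$.
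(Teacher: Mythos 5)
Your proof is correct and is essentially identical to the paper's: both expand each side via the Zinbiel axiom \eqref{eq:6.7} and observe that the two expansions are the same pair of terms. Your explicit remark that $pq=qp$ is needed (from the standing commutativity of $\pi$ in Section \ref{se:pi}) is a point the paper leaves implicit, but it does not change the argument.
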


 \begin{proof} By Eq.(\ref{eq:6.7}), for $a\in A_{p}$, $b\in A_{q}$, $c\in A_{t}$, we have
 \begin{eqnarray*}
 a\star_{p,qt}(b\star_{q,t} c)=(a\star_{p,q} b)\star_{pq,t} c+(b\star_{q,p} a)\star_{qp,t}c,
 \end{eqnarray*}
 and
 \begin{eqnarray*}
 b\star_{q,pt}(a\star_{p,t} c)=(b\star_{q,p} a)\star_{qp,t} c+(a\star_{p,q} b)\star_{pq,t}c.
 \end{eqnarray*}
 Hence, Eq.(\ref{eq:6.17}) holds. This completes the proof.
 \end{proof}

 \begin{pro}\label{pro:6.13} Let $(\{A_{\varphi}\}_{\varphi\in\pi}, \{\star_{p,q}\}_{p,q\in\pi})$ be a Zinbiel T-algebra. Define
 \begin{eqnarray}
 &a \prec_{p,q} b = b \star_{q,p}a,\quad  a \succ_{p,q} b = a \star_{p,q}b,&\label{eq:6.18}
 \end{eqnarray}
 for $a\in A_{p}$, $b\in A_{q}$ and $p,q\in\pi$. Then $(\{A_{\varphi}\}_{\varphi\in\pi}, \{\prec_{p,q}\}_{p,q\in\pi}, \{\succ_{p,q}\}_{p,q\in\pi})$ is a dendriform T-algebra.
 \end{pro}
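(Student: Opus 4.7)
My plan is to unwind the three dendriform T-algebra axioms (Eqs.~(\ref{eq:5.4}), (\ref{eq:5.5}), (\ref{eq:5.6})) using the substitutions in Eq.~(\ref{eq:6.18}) and verify each one by a direct application of the Zinbiel axiom (\ref{eq:6.7}) or of Lemma~\ref{lem:6.12}. Throughout I will use the standing assumption that $\pi$ is a commutative semigroup, so I may freely identify indices like $pq$ with $qp$ and $tq$ with $qt$; this is essential, since otherwise the Zinbiel axiom and the desired dendriform axioms live on mismatched components.

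First, for Eq.~(\ref{eq:5.4}), the left-hand side becomes $c\star_{t,pq}(b\star_{q,p}a)$ and the right-hand side becomes $(c\star_{t,q}b)\star_{tq,p}a+(b\star_{q,t}c)\star_{qt,p}a$, which is exactly the Zinbiel axiom (\ref{eq:6.7}) applied to the triple $(c,b,a)$ with indices $(t,q,p)$. For Eq.~(\ref{eq:5.6}), the left-hand side becomes $(b\star_{q,p}a+a\star_{p,q}b)\star_{pq,t}c$ and the right-hand side becomes $a\star_{p,qt}(b\star_{q,t}c)$; this is the Zinbiel axiom applied to $(a,b,c)$ with indices $(p,q,t)$, after invoking $pq=qp$.

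The one axiom that is not an immediate rewriting of (\ref{eq:6.7}) is Eq.~(\ref{eq:5.5}): the LHS is $c\star_{t,pq}(a\star_{p,q}b)$ and the RHS is $a\star_{p,qt}(c\star_{t,q}b)$. Here I would invoke Lemma~\ref{lem:6.12} (Eq.~(\ref{eq:6.17})) with the substitution $(a,b,c)\mapsto(c,a,b)$ and indices $(t,p,q)$, obtaining precisely $c\star_{t,pq}(a\star_{p,q}b)=a\star_{p,tq}(c\star_{t,q}b)$, and then use $tq=qt$.

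The argument has no conceptual obstacle; the only thing to be careful about is the bookkeeping of indices, because each binary operation carries two subscripts and the dendriform axioms place the arguments in a different order than the Zinbiel axiom does. In particular, the commutativity of $\pi$ is used tacitly but crucially in each of the three verifications — without it, axiom (\ref{eq:5.5}) could not be derived from (\ref{eq:6.17}) and the two summands in axioms (\ref{eq:5.4}) and (\ref{eq:5.6}) would land in different components. Once the index conventions are fixed, the proof reduces to three one-line checks.
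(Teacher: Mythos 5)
Your proof is correct and follows essentially the same route as the paper's: axioms (\ref{eq:5.4}) and (\ref{eq:5.6}) are verified by rewriting them directly into instances of the Zinbiel identity (\ref{eq:6.7}), while (\ref{eq:5.5}) is deduced from Lemma \ref{lem:6.12}, exactly as in the paper. Your explicit remark that the commutativity of $\pi$ is what makes the component indices match is a point the paper leaves tacit, but it does not change the argument.
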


 \begin{proof} By Lemma \ref{lem:6.12}, $(a\succ_{p,q}b)\prec_{pq,t}c=a\succ_{p,qt}(b\prec_{q,t}c)$ holds. And for all $a\in A_{p}$, $b\in A_{q}$, $c\in A_{t}$, we have
 \begin{eqnarray*}
 (a\prec_{p,q}b)\prec_{pq,t}c
 &\stackrel{(\ref{eq:6.7})}=&(c\star_{t,q}b)\star_{tq,p} a+(b\star_{q,t}c)\star_{qt,p} a\\
 &\stackrel{(\ref{eq:6.18})}=&a\prec_{p,qt}(b\prec_{q,t}c+b\succ_{q,t}c),
 \end{eqnarray*}
 and
 \begin{eqnarray*}
 (a\prec_{p,q}b+a\succ_{p,q}b)\succ_{pq,t}c
 &\stackrel{(\ref{eq:6.18})}=&(b\star_{q,p}a)\star_{qp,t}c+(a\star_{p,q}b)\star_{pq,t}c\\
 &\stackrel{(\ref{eq:6.7})}=&a\star_{p,qt}(b\star_{q,t}c)\\
 &\stackrel{(\ref{eq:6.18})}=&a\succ_{p,qt}(b\succ_{q,t}c).
 \end{eqnarray*}
 Thus, $(\{A_{\varphi}\}_{\varphi\in\pi}, \{\prec_{ p,q}\}_{p,q\in\pi}, \{\succ_{ p,q}\}_{p,q\in\pi})$ is a dendriform T-algebra.
 \end{proof}

 \begin{pro}\label{pro:6.14} Let $(\{A_{\varphi}\}_{\varphi\in\pi}, \{\star_{p,q}\}_{p,q\in\pi})$ be a Zinbiel T-algebra. Define
 \begin{eqnarray}
 &a \diamond_{p,q} b =  a \star_{p,q} b + b \star_{q,p} a &\label{eq:6.19}
 \end{eqnarray}
 for $a\in A_{p}$, $b\in A_{q}$ and $p,q\in\pi$. Then $(\{A_{\varphi}\}_{\varphi\in\pi}, \{\diamond_{p,q}\}_{p,q\in\pi})$ is a T-algebra.
 \end{pro}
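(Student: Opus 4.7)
The plan is to verify directly that the T-algebra associativity law in \meqref{eq:5.37}, namely
$$(a\diamond_{p,q} b)\diamond_{pq,t} c = a\diamond_{p,qt}(b\diamond_{q,t} c),$$
holds under the definition \meqref{eq:6.19}. Since $\pi$ is assumed to be a commutative semigroup throughout this section, all iterated products of $p,q,t$ may be freely reordered, so the Zinbiel T-algebra axiom \meqref{eq:6.7} may be applied with any triple of arguments. This will be the only ingredient.

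First I would expand the left-hand side. Using bilinearity of $\diamond$ and then \meqref{eq:6.19} on the outer occurrence,
$$(a\diamond_{p,q} b)\diamond_{pq,t} c = (a\star_{p,q}b)\star_{pq,t}c + (b\star_{q,p}a)\star_{qp,t}c + c\star_{t,pq}(a\star_{p,q}b) + c\star_{t,qp}(b\star_{q,p}a).$$
The first two terms collapse to $a\star_{p,qt}(b\star_{q,t}c)$ by \meqref{eq:6.7}. For the remaining two terms I would apply \meqref{eq:6.7} again (to the inner brackets under $c\star_{t,-}$), giving
$$c\star_{t,pq}(a\star_{p,q}b) = (c\star_{t,p}a)\star_{tp,q}b + (a\star_{p,t}c)\star_{pt,q}b,$$
$$c\star_{t,qp}(b\star_{q,p}a) = (c\star_{t,q}b)\star_{tq,p}a + (b\star_{q,t}c)\star_{qt,p}a.$$

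Next I would expand the right-hand side symmetrically:
$$a\diamond_{p,qt}(b\diamond_{q,t}c) = a\star_{p,qt}(b\star_{q,t}c) + a\star_{p,tq}(c\star_{t,q}b) + (b\star_{q,t}c)\star_{qt,p}a + (c\star_{t,q}b)\star_{tq,p}a.$$
Comparing with the expansion of the left-hand side, the terms $a\star_{p,qt}(b\star_{q,t}c)$, $(b\star_{q,t}c)\star_{qt,p}a$, and $(c\star_{t,q}b)\star_{tq,p}a$ match directly. The remaining check is that
$$(c\star_{t,p}a)\star_{tp,q}b + (a\star_{p,t}c)\star_{pt,q}b = a\star_{p,tq}(c\star_{t,q}b),$$
which is precisely \meqref{eq:6.7} applied to the triple $(a,c,b)\in A_p\times A_t\times A_q$.

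Hence both sides agree, and $(\{A_\varphi\}_{\varphi\in\pi},\{\diamond_{p,q}\}_{p,q\in\pi})$ satisfies \meqref{eq:5.37}. There is no real obstacle here; the only point worth emphasizing is that the commutativity of $\pi$ is used so that the Zinbiel axiom may be invoked with arguments in any order (i.e.\ so that objects living in $A_{tpq}$, $A_{ptq}$, $A_{qpt}$, etc.\ can be identified as inhabiting the same space $A_{pqt}$).
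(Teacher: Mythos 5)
Your computation is correct: expanding both sides of $(a\diamond_{p,q}b)\diamond_{pq,t}c = a\diamond_{p,qt}(b\diamond_{q,t}c)$ and applying the Zinbiel axiom \eqref{eq:6.7} to the three triples $(a,b,c)$, $(c,a,b)$, $(c,b,a)$ on the left and to $(a,c,b)$ on the right makes everything cancel, and your use of the standing commutativity assumption on $\pi$ to identify the various $A_{pqt}$, $A_{tpq}$, etc.\ is exactly what legitimizes these applications. However, your route differs from the paper's: the paper proves this in one line by composing Proposition \ref{pro:6.13} (every Zinbiel T-algebra yields a dendriform T-algebra via $a\prec_{p,q}b=b\star_{q,p}a$, $a\succ_{p,q}b=a\star_{p,q}b$) with Part (1) of Proposition \ref{pro:5.5} (every dendriform T-algebra yields a T-algebra via $\diamond=\prec+\succ$), and one checks immediately that the composite product is \eqref{eq:6.19}. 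The two arguments are mathematically the same identity-shuffling once unfolded, but the paper's version buys brevity and makes the conceptual factorization Zinbiel $\Rightarrow$ dendriform $\Rightarrow$ associative explicit, whereas yours buys a self-contained verification that does not depend on the two earlier propositions. Either is acceptable; if you keep the direct version, it would be worth noting that it silently reproves a special case of those propositions.
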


 \begin{proof} It follows from Proposition  \ref{pro:6.13} and Part (1) in Proposition  \ref{pro:5.5}.
 \end{proof}

 \subsection{Poisson T-algebras from pre-Poisson T-algebras and pre-Poisson T-algebras from Rota-Baxter Poisson T-algebras } Pre-Poisson algebra was proposed by Aguiar in \cite{A00} by combining Zinbiel algebra and pre-Lie algebra. We extend it to the T-version. Further we combine the Rota-Baxter operator with Poisson algebra.

 \begin{defi}\label{de:6.9}  A {\bf Poisson T-algebra} is a triple $(\{A_{\varphi}\}_{\varphi\in\pi},$ $\{\mu_{p,q}\}_{p,q\in\pi},\{[,]_{p,q}\}_{p,q\in\pi})$ where $(\{A_{\varphi}\}_{\varphi\in\pi},$ $\{\mu_{p,q}\}_{p,q\in\pi})$ is a commutative T-algebra, $(\{A_{\varphi}\}_{\varphi\in\pi},$ $\{[,]_{p,q}\}_{p,q\in\pi})$ is a Lie T-algebra and the following condition holds
 \begin{eqnarray}
 &[a,b\cdot_{q,t}c]_{p,qt}=[a,b]_{p,q}\cdot_{pq,t} c+b\cdot_{q,pt}[a,c]_{p,t},&\label{eq:6.12}
 \end{eqnarray}
 for $a\in A_{p}$, $b\in A_{q}$, $c\in A_{t}$ and $p,q,t\in \pi$.
 \end{defi}

 \begin{defi}\label{de:6.10} A {\bf pre-Poisson T-algebra} is a triple $(\{A_{\varphi}\}_{\varphi\in\pi},\{\star_{p,q}\}_{p,q\in\pi},\{\ast_{p,q}\}_{p,q\in\pi})$ where $(\{A_{\varphi}\}_{\varphi\in\pi},$ $\{\star_{p,q}\}_{p,q\in\pi})$ is a  Zinbiel T-algebra, $(\{A_{\varphi}\}_{\varphi\in\pi},\{\ast_{p,q}\}_{p,q\in\pi})$ is a pre-Lie T-algebra and the following conditions hold
 \begin{eqnarray}
 &(a\ast_{p,q} b)\star_{pq,t} c-(b\ast_{q,p} a)\star_{qp,t} c=a\ast_{p,qt}(b\star_{q,t}c) -b\star_{q,pt}(a\ast_{p,t}c),&\label{eq:6.13}\\
 &(a\star_{p,q} b)\ast_{pq,t} c+(b\star_{q,p} a)\ast_{qp,t} c=a\star_{p,qt}(b\ast_{q,t}c) +b\star_{q,pt}(a\ast_{p,t}c),&\label{eq:6.14}
 \end{eqnarray}
 for $a\in A_{p}$, $b\in A_{q}$, $c\in A_{t}$ and $p,q,t\in \pi$.
 \end{defi}

 \begin{pro}\mlabel{pro:xz08.24} Let $(\{A_{\varphi}\}_{\varphi\in\pi}, \{\star_{p,q}\}_{p,q\in\pi}, \{\ast_{p,q}\}_{p,q\in\pi})$ be a pre-Poisson $T$-algebra. Define
 \begin{eqnarray*}
 &a\diamond_{p,q}b:=a\star_{p,q}b+b\star_{q,p}a,\quad [a,b]_{p,q}:=a\ast_{p,q}b -b\ast_{q,p}a,&\mlabel{eq:08.1655}
 \end{eqnarray*}
 for $a\in A_{p}$, $b\in A_{q}$ and $p,q\in\pi$. Then $(\{A_{\varphi}\}_{\varphi\in\pi},$ $\{\diamond_{p,q}\}_{p,q\in\pi},\{[,]_{p,q}\}_{p,q\in\pi})$ is a Poisson $T$-algebra.
 \end{pro}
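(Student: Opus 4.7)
The plan is to verify, in turn, the three axioms packaged in Definition \ref{de:6.9}: that $\diamond$ makes $\{A_\varphi\}_{\varphi\in\pi}$ into a commutative $T$-algebra, that $[,]$ makes it into a Lie $T$-algebra, and that these two operations satisfy the Leibniz identity \eqref{eq:6.12}. This is the natural $T$-version of the classical pre-Poisson $\Rightarrow$ Poisson theorem due to Aguiar, so the strategy mirrors the known scalar case but requires careful tracking of indices.

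The first two axioms are immediate from results already established in this section. The $T$-algebra axiom for $\diamond$ is exactly Proposition \ref{pro:6.14}, and commutativity $a\diamond_{p,q}b=b\diamond_{q,p}a$ is apparent from the manifestly symmetric definition $a\diamond_{p,q}b=a\star_{p,q}b+b\star_{q,p}a$. The Lie $T$-algebra axioms for $[,]$ follow from Proposition \ref{pro:6.15} applied to the pre-Lie $T$-algebra $(\{A_\varphi\}_{\varphi\in\pi},\{\ast_{p,q}\}_{p,q\in\pi})$ that is part of the pre-Poisson data.

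The substance is the Leibniz compatibility. I would expand the left-hand side $[a,b\diamond_{q,t} c]_{p,qt}$ into four summands of the shape $a\ast(b\star c)$, $a\ast(c\star b)$, $-(b\star c)\ast a$, $-(c\star b)\ast a$ (with the appropriate grading indices, which match up because $\pi$ is commutative). The right-hand side $[a,b]_{p,q}\diamond_{pq,t}c+b\diamond_{q,pt}[a,c]_{p,t}$ expands into eight summands. To match the two sides, I would apply the first pre-Poisson axiom \eqref{eq:6.13} twice — once with $(a,b,c)$ and once with $(a,c,b)$ — to rewrite both terms of type $a\ast_{p,qt}(\cdot\star\cdot)$ on the left, and apply the second axiom \eqref{eq:6.14} once with $(a,b,c)\to(b,c,a)$ to rewrite the sum $(b\star c)\ast a+(c\star b)\ast a$. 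After substitution, the resulting expression consists of eight terms that pair up bijectively with the eight summands of the right-hand side.

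The main obstacle is bookkeeping: tracking the several distinct subscript patterns across the expansion and ensuring the permutations of $p,q,t$ align correctly, relying on commutativity of $\pi$ so that $pq=qp$, $qt=tq$, $pqt=qpt=ptq$, etc. The conceptual point is that \emph{both} compatibility axioms of Definition \ref{de:6.10} are needed and neither suffices alone: axiom \eqref{eq:6.13} converts $a\ast(b\star c)$-type terms arising from $[a,b\diamond c]$ into mixed $(\cdot\ast\cdot)\star\cdot$- and $\cdot\star(\cdot\ast\cdot)$-type terms, while axiom \eqref{eq:6.14} converts the dually placed $(\cdot\star\cdot)\ast\cdot$-type terms. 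Combining the two rewrites is exactly what makes the eight-term matching close up, and no further hypothesis beyond \eqref{eq:6.13}, \eqref{eq:6.14}, and the commutativity of $\pi$ is required.
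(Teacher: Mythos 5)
Your proposal is correct and follows essentially the same route as the paper: the commutative $T$-algebra and Lie $T$-algebra axioms are delegated to Propositions \ref{pro:6.13}/\ref{pro:5.5} (equivalently \ref{pro:6.14}) and \ref{pro:6.15}, and the Leibniz identity \eqref{eq:6.12} is verified by the same eight-term expansion using \eqref{eq:6.13} twice (for $(a,b,c)$ and $(a,c,b)$) and \eqref{eq:6.14} once for the cyclic substitution $(b,c,a)$. The only cosmetic difference is that the paper runs the computation from the right-hand side toward $[a,b\diamond_{q,t}c]$ rather than the reverse.
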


 \begin{proof} Eq.(\ref{eq:6.12}) can be verified as follows. For all $a\in A_p, b\in A_q$ and $c\in A_t$, we have
 \begin{eqnarray*}
 &&[a,b]_{p,q}\diamond_{pq,t}c+b\diamond_{q,pt}[a,c]_{p,t}\\ 
 &&\qquad=(a\ast_{p,q}b)\star_{pq,t}c-(b\ast_{q,p} a)\star_{qp,t}c +c\star_{t,pq}(a\ast_{p,q}b)-c\star_{t,qp}(b\ast_{q,p}a)\\
 &&\qquad\quad+b\star_{q,pt}(a\ast_{p,t}c)-b\star_{q,tp}(c\ast_{t,p} a)
 +(a\ast_{p,t}c)\star_{pt,q}b-(c\ast_{t,p} a)\star_{tp,q}b\\
 &&\quad\stackrel{(\mref{eq:6.13})}=a\ast_{p,qt}(b\star_{q,t}c)-b\star_{q,pt}(a\ast_{p,t}c)
 +c\star_{t,pq}(a\ast_{p,q}b)-c\star_{t,qp}(b\ast_{q,p}a)\\
 &&\qquad\quad+b\star_{q,pt}(a\ast_{p,t}c)-b\star_{q,tp}(c\ast_{t,p} a)
 +a\ast_{p,tq}(c\star_{t,q}b)-c\star_{t,pq}(a\ast_{p,q}b)\\
 &&\qquad=a\ast_{p,qt}(b\star_{q,t}c)-c\star_{t,qp}(b\ast_{q,p}a)
 -b\star_{q,tp}(c\ast_{t,p} a)+a\ast_{p,tq}(c\star_{t,q}b)\\
 &&\quad\stackrel{(\mref{eq:6.14})}=a\ast_{p,qt}(b\star_{q,t}c)+a\ast_{p,tq}(c\star_{t,q}b)
 -(b\star_{q,t}c)\ast_{qt,p} a-(c\star_{t,q}b)\ast_{tq,p}a\\ 
 &&\qquad=[a,b\diamond_{q,t}c]_{pq,t}.
 \end{eqnarray*}
 Then by Propositions \mref{pro:6.15}, \mref{pro:6.13} and Part (1) in Proposition \mref{pro:5.5}, the proof is finished.
 \end{proof}

 Based on Definitions \ref{de:5.14}, \ref{de:6.3} and \ref{de:6.9}, we introduce the following notion.

 \begin{defi}\label{de:6.10C} A {\bf Rota-Baxter Poisson T-algebra of weight $\lambda$} is a triple $(\{A_{\varphi}\}_{\varphi\in\pi}$, $\{\mu_{p,q}\}_{p,q\in\pi}$, $\{[,]_{p,q}\}_{p,q\in\pi})$ endowed with a family of linear maps $\{R_{\varphi}: A_{\varphi}\lr A_{\varphi}\}_{\varphi\in \pi}$ such that

 (1) $(\{A_{\varphi}\}_{\varphi\in\pi}, \{\mu_{p,q}\}_{p,q\in\pi}, \{[,]_{p,q}\}_{p,q\in\pi})$ is a Poisson T-algebra,

 (2) $(\{A_{\varphi}\}_{\varphi\in\pi},$ $\{\mu_{p,q}\}_{p,q\in\pi},\{R_{\varphi}\}_{\varphi\in \pi})$ is a Rota-Baxter T-algebra of weight $\lambda$,

 (3) $(\{A_{\varphi}\}_{\varphi\in\pi}, \{[,]_{p,q}\}_{p,q\in\pi}, \{R_{\varphi}\}_{\varphi\in \pi})$ is a Rota-Baxter Lie T-algebra of weight $\lambda$.

 We denote it by $(\{A_{\varphi}\}_{\varphi\in\pi}, \{\mu_{p,q}\}_{p,q\in\pi}, \{[,]_{p,q}\}_{p,q\in\pi},\{R_{\varphi}\}_{\varphi\in\pi},\lambda)$.
 \end{defi}

 \begin{rmk} We call $(A_{1}, \mu_{1,1}, [,]_{1,1}, R_{1}, \lambda)$ {\bf Rota-Baxter Poisson algebra of weight $\lambda$}.
 \end{rmk}

 \begin{pro}\label{pro:6.16} Let $(\{A_{\varphi}\}_{\varphi\in\pi}, \{\mu_{p,q}\}_{p,q\in\pi}, \{[,]_{p,q}\}_{p,q\in\pi},\{R_{\varphi}\}_{\varphi\in\pi})$ be a Rota-Baxter Poisson T-algebra of weight $0$. Define
 \begin{eqnarray}
 &a\star_{p,q}b:=R_{p}(a)\cdot_{p,q}b,\quad a\ast_{p,q}b:=[R_{p}(a),b]_{p,q},&\label{eq:6.21}
 \end{eqnarray}
 for $a\in A_{p}$, $b\in A_{q}$ and $p,q\in\pi$. Then $(\{A_{\varphi}\}_{\varphi\in\pi}, \{\star_{p,q}\}_{p,q\in\pi}, \{\ast_{p,q}\}_{p,q\in\pi})$ is a pre-Poisson T-algebra.
 \end{pro}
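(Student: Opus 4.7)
The plan is to verify the three defining properties of a pre-Poisson T-algebra from Definition~\ref{de:6.10} separately, and to note which requires real calculation. The Zinbiel T-algebra structure on $\star$ is immediate from Corollary~\ref{cor:6.8}(2): the hypothesis $R_{p}(a)\cdot_{p,q}b=b\cdot_{q,p}R_{p}(a)+\lambda b\cdot_{q,p}a$ of that corollary reduces at $\lambda=0$ to $R_{p}(a)\cdot_{p,q}b=b\cdot_{q,p}R_{p}(a)$, which holds because the multiplication of a Poisson T-algebra is commutative by Definition~\ref{de:6.9}. The pre-Lie T-algebra structure on $\ast$ is immediate from Proposition~\ref{pro:6.11} applied to the underlying Rota-Baxter Lie T-algebra of weight $0$, which produces exactly the formula $a\ast_{p,q}b=[R_{p}(a),b]_{p,q}$. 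The substantive content is thus the verification of the two mixed compatibilities \eqref{eq:6.13} and \eqref{eq:6.14}.

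For \eqref{eq:6.13} the plan is to simplify the right-hand side first: expanding $b\star_{q,t}c=R_{q}(b)\cdot_{q,t}c$ and applying the Poisson derivation identity \eqref{eq:6.12} to $[R_{p}(a),R_{q}(b)\cdot_{q,t}c]_{p,qt}$ produces $[R_{p}(a),R_{q}(b)]_{p,q}\cdot_{pq,t}c+R_{q}(b)\cdot_{q,pt}[R_{p}(a),c]_{p,t}$, and the second summand exactly cancels the subtracted term $b\star_{q,pt}(a\ast_{p,t}c)$. On the left I would rewrite both $R_{pq}([R_{p}(a),b]_{p,q})$ and $R_{qp}([R_{q}(b),a]_{q,p})$ using the weight-zero Rota-Baxter identity for $[,]$ to produce terms $[R_{p}(a),R_{q}(b)]_{p,q}$ plus corrections, then use antisymmetry of $[,]$ together with commutativity of $\pi$ (so that $A_{pq}=A_{qp}$ and $R_{pq}=R_{qp}$) to recognise that the two correction terms recombine as another instance of the Rota-Baxter bracket identity equal to $[R_{p}(a),R_{q}(b)]_{p,q}$; a single cancellation then matches the right-hand side.

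For \eqref{eq:6.14} the roles of the two Rota-Baxter identities exchange. On the left I would apply the weight-zero Rota-Baxter identity for the multiplication to rewrite $R_{pq}(R_{p}(a)\cdot_{p,q}b)$ as $R_{p}(a)\cdot_{p,q}R_{q}(b)-R_{pq}(a\cdot_{p,q}R_{q}(b))$, and using commutativity of $\mu$ together with $pq=qp$ the symmetric correction terms collapse to yield a single bracket $[R_{p}(a)\cdot_{p,q}R_{q}(b),c]_{pq,t}$. On the right, applying the first-slot derivation identity (obtained from \eqref{eq:6.12} by antisymmetry) to the same bracket gives $R_{p}(a)\cdot_{p,qt}[R_{q}(b),c]_{q,t}+R_{q}(b)\cdot_{q,pt}[R_{p}(a),c]_{p,t}$ after one use of commutativity of $\mu$, which matches the stated right-hand side term for term. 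The only real obstacle is disciplined index bookkeeping: tracking the signs produced by $[x,y]_{p,q}=-[y,x]_{q,p}$, the identifications $R_{pq}=R_{qp}$ forced by commutativity of $\pi$, and the places where commutativity of $\mu$ is needed to align the two symmetric summands on each side; beyond that, the proof is a straightforward combination of the Poisson derivation identity and the two weight-zero Rota-Baxter identities.
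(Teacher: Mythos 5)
Your proposal is correct and follows essentially the same route as the paper: cite Proposition \ref{pro:6.11} for the pre-Lie structure and Corollary \ref{cor:6.8}(2) for the Zinbiel structure (you additionally, and rightly, note that the hypothesis of that corollary reduces at $\lambda=0$ to the commutativity of $\mu$), then verify Eq.~(\ref{eq:6.13}) by reducing both sides to $[R_{p}(a),R_{q}(b)]_{p,q}\cdot_{pq,t}c$ via the weight-zero Rota-Baxter Lie identity, antisymmetry, commutativity of $\pi$, and the derivation identity (\ref{eq:6.12}), and Eq.~(\ref{eq:6.14}) analogously via the Rota-Baxter identity for $\mu$. The paper writes out only (\ref{eq:6.13}) and declares (\ref{eq:6.14}) similar, so your sketch of the latter is a welcome addition rather than a deviation.
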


 \begin{proof} Based on Proposition \ref{pro:6.11} and Part (2) in Corollary \ref{cor:6.8}, we only need to check the following two conditions.
 We first prove Eq.(\ref{eq:6.13}) as follows.
 \begin{eqnarray*}
 (a\ast_{p,q} b)\star_{pq,t} c-(b\ast_{q,p} a)\star_{qp,t} c
 &\stackrel{(\ref{eq:6.21})}=&R_{pq}([R_{p}(a),b]_{p,q}) \cdot_{pq,t} c-R_{qp}([R_{q}(b),a]_{q,p}) \cdot_{qp,t} c\\
 &\stackrel{(\ref{eq:6.3})}=&R_{pq}([R_{p}(a),b]_{p,q}) \cdot_{pq,t} c+R_{pq}([a,R_{q}(b)]_{p,q}) \cdot_{pq,t} c\\
 &\stackrel{(\ref{eq:6.5})}=&[R_{p}(a),R_{q}(b)]_{p,q} \cdot_{pq,t} c+R_{q}(b) \cdot_{q,pt} [R_{p}(a),c]_{p,t}\\
 &&-R_{q}(b) \cdot_{q,pt} [R_{p}(a),c]_{p,t}\\
 &\stackrel{(\ref{eq:6.12})}=&[R_{p}(a),R_{q}(b) \cdot_{q,t} c]_{p,qt}-R_{q}(b) \cdot_{q,pt} [R_{p}(a),c]_{p,t}\\
 &\stackrel{(\ref{eq:6.21})}=&a\ast_{p,qt} (b\star_{q,t}c) -b\star_{q,pt}(a\ast_{p,t}c).
 \end{eqnarray*}
 Eq.(\ref{eq:6.14}) can be checked similarly. These finish the proof. 
 \end{proof}

 \section{Further research}\label{se:future}
 \begin{itemize}
   \item In \cite{GL}, the authors gave a broad study of representation and module theory of Rota-Baxter algebras. They provided the equivalent characterizations of Rota-Baxter modules by using quasi-idempotency and the ring of Rota-Baxter operators. Furthermore in \cite{ZGZ}, many examples of Rota-Baxter paired modules were obtained from the theory of Hopf algebras. In the forthcoming paper \cite{MLCW}, we will present the (co)representation theory of Rota-Baxter T-(co)algebras and introduce the notion of Rota-Baxter T-(co)modules and investigate their properties, especially pay more attention to the relation between Rota-Baxter operators and Turaev's Hopf group (co)algebras.

   \item A Novikov-Poisson algebra introduced in \cite{Xu} is a triple $(A, \cdot, *)$ such that $(A, \cdot)$ is a commutative associative algebra, $(A, *)$ is a Novikov algebra and two compatible conditions hold. In \cite{MLiL}, we will study the T-version of Novikov(-Poisson) algebra and explore the Gel'fand-Dorfman theorem on T-algebras. Furthermore, the properties of infinitesimal T-bialgebra (generalizing infinitesimal bialgebra studied in \cite{Ag04}) will be investigated.
   \item In \cite{YW}, the authors combined the Hom-algebra structure introduced in \cite{MS2} and the structure of Turaev's Hopf group coalgebra in \cite{Tu,Tu1}, and then introduced the notion of Hom-Hopf T-coalgebra and gave the construction of Drinfeld double. The structure of BiHom-algebra was invented in \cite{GMMP} generalizing the Hom-algebra. It is natural to consider the BiHom-version of Hopf T-(co)algebra.
 \end{itemize}

\section*{Acknowledgment} The authors are deeply indebted to Professor Vsevolod Gubarev for bringing our attention to refs\cite{AB,de,G} and to the anonymous referee for his/her useful suggestions to improve the original manuscript. This work is supported by Natural Science Foundation of Henan Province (No. 212300410365) and National Natural Science Foundation of China (Nos. 11771069, 11871144).


\begin{thebibliography}{99} \small
 

 \bibitem{AB} H. H. An, C. M. Bai, From Rota-Baxter algebras to pre-Lie algebras. {\em J. Phys. A} {\bf 41}(2008), 015201, 19pp.

 \bibitem{A00} M. Aguiar, Pre-Poison algebras. {\em Lett. Math. Phys.}, {\bf 54} (2000), 263-277.

 \bibitem{Ag04} M. Aguiar, Infinitesimal bialgebras, pre-Lie and dendriform algebras. Hopf algebras, 1-33, {\em Lecture Notes in Pure and Appl. Math.}, {\bf 237}, Dekker, New York, 2004.

 \bibitem{A03} M. Aguiar, Dendriform algebras relative to a semigroup. {\em SIGMA Symmetry Integrability Geom. Methods Appl.} {\bf 16} (2020), Paper No. 066, 15 pp.

 \bibitem{AS} N. Andruskiewitsch, H. J. Schneider, On the classification of finite-dimensional pointed Hopf algebras. {\em Ann. Math.} {\bf 171}(2010), 375-417.

 \bibitem{At} F. V. Atkinson, Some aspects of Baxter's functional equation. {\em J. Math. Anal. Appl.} {\bf 7} (1963), 1-30.

 \bibitem{Ba60} G. Baxter, An analytic problem whose solution follows from a simple algebraic identity. {\em Pacific J. Math.} {\bf 10} (1960), 731-742. 

 \bibitem{BFVV} M. Buckleya, T. Fieremansb, C. Vasilakopoulouc, J. Vercruysse, A Larson-Sweedler theorem for Hopf $\mathcal{V}$-categories. {\em Adv. Math.} {\bf 376} (2021), 107456.

 \bibitem{CLZZ} D. Chen, Y. F. Luo, Y. Zhang, Y. Y. Zhang, Free $\Omega$-Rota-Baxter algebras and Gr\"obner-Shirshov bases. Internat. {\em J. Algebra Comput.} {\bf 30} (2020), 1359-1373.

 \bibitem{CK} A. Connes, D. Kreimer, Renormalization in quantum field theory and the Riemann-Hilbert problem. I. The Hopf algebra structure of graphs and the main theorem. {\em Comm. Math. Phys.} {\bf 210} (2000), 249-273.

 \bibitem{de} S. L. de Braganca, Finite dimensional Baxter algebras. {\em Stud. Appl. Math. (1)} {\bf 54}(1975), 75-89.

 \bibitem{E06} K. Ebrahimi-Fard, Rota-Baxter algebras and the Hopf algebra of renormalization. {\em Ph, D. Thesis, Bonn University}, 2006.

 \bibitem{EFGBP} K. Ebrahimi-Fard, J. Gracia-Bondia, F. Patras, A Lie theoretic approach to renormalization. {\em Comm. Math. Phys.} {\bf 276} (2007), 519-549.

 \bibitem{Fo} L. Foissy, Bidendriform bialgebras, trees and free quasi-symmetric functions. {\em J. Pure Appl. Algebra} {\bf 209} (2007), 439-459.

 \bibitem{Ger}  M. Gerstenhaber, The cohomology structure of an associative ring, {\em Ann. of Math.} {\bf 78} (1963), 267-288.

 \bibitem{Go} M. Goncharov, Rota-Baxter operators on cocommutative Hopf algebras.  {\em J. Algebra} {\bf 582} (2021), 39-56.

 \bibitem{GMMP} G. Graziani, A. Makhlouf, C. Menini, F. Panaite, BiHom-Associative algebras, BiHom-Lie algebras and BiHom-Bialgebras.  {\em SIGMA} {\bf 11} (2015),086,34 pages.

 \bibitem{G} V. Gubarev, Rota-Baxter operators on a sum of fields. {\em J. Algebra Appl.} {\bf 19} (2020), 2050118.

 \bibitem{Guo} L. Guo, Operated semigroups, Motzkin paths and rooted trees. {\em J. Algebraic Combin.} {\bf 29} (2009), 35-62.

 \bibitem{Guo1} L. Guo, An introduction to Rota-Baxter algebra. {\em Surveys of Modern Mathematics}, {\bf 4}. International Press, Somerville, MA; Higher Education Press, Beijing, 2012. xii+226 pp.

 \bibitem{GL} L. Guo, Z. Z. Lin, Representations and modules of Rota-Baxter algebras, arXiv: 1905.01531v2.

 \bibitem{GTY2} L. Guo, J.-Y. Thibon, H. Y. Yu, The Hopf algebras of signed permutations, of weak quasi-symmetric functions and of Malvenuto-Reutenauer. {\em Adv. Math.} {\bf 374} (2020), 107341, 34 pp.


 \bibitem{LR04} J. L. Loday, M. Ronco, Trialgebras and families of polytopes. In: {\em Homotopy Theory: Relations with Algebraic Geometry, Group Cohomology, and Algebraic $K$-Theory}, {\bf vol. 346}. Providence, RI: Amer Math Soc, 2004, 369-398.

 \bibitem{LO95} J. L. Loday, Cup product for Leibniz cohomology and dual Leibniz algebras. {\em Math. Scand.} {\bf 77} (1995), 189-196.


 \bibitem{MLiL} T. S. Ma, B. Li, J. Li, Novikov-Poisson T-algebras. Prepared.


 \bibitem{MLY} T. S. Ma, J. Li, H. Y. Yang, Rota-Baxter bialgebras arising from (co-)quasi-idempotent elements. {\em Hacettepe J. Math. Stat.} {\bf 50} (2021), 216-223.

 \bibitem{MLCW} T. S. Ma, J. Li, L. Y. Chen, S. H. Wang, Rota-Baxter operators on Turaev's Hopf group (co)algebras II. Representations. Prepared.

 \bibitem{MLX} T. S. Ma, H. Y. Li, S. X. Xu, Construction of a braided monoidal category for Brzezi\'{n}ski crossed coproducts of Hopf $\pi$-algebras. {\em Colloq. Math.} {\bf 149} (2017), 309-323.

 \bibitem{MLZ} T. S. Ma, H. Y. Li, Z. C. Zhang, Two results on Brzezinski $\pi$-crossed product. {\em Comm. Algebra} {\bf 42}(2014), 2082-2098.

 \bibitem{ML} T. S. Ma, L. L. Liu, Rota-Baxter coalgebras and Rota-Baxter bialgebras. {\em Linear Multilinear Algebra} {\bf 64} (2016), 968-979.

 \bibitem{MMS} T. S. Ma, A. Makhlouf, S. Silvestrov, Rota-Baxter cosystems and coquasitriangular mixed bialgebras. {\em J. Algebra Appl.} {\bf 20} (2021), 2150064.

 \bibitem{MS2} A. Makhlouf, S. D. Silvestrov, Hom-algebras and hom-coalgebras. {\em J. Algebra Appl.} {\bf 9}(2010), 553-589.

 \bibitem{Man} D. Manchon, A short survey on pre-Lie algebras. {\em Noncommutative geometry and physics: renormalisation, motives, index theory}, 89-102, ESI Lect. Math. Phys., Eur. Math. Soc., Z\"{u}rich, 2011.

 \bibitem{Oh1} T. Ohtsuki, Colored ribbon Hopf algebras and universal invariants of framed links. {\em J. Knot Theory Ramifications} {\bf 2} (1993), 211-232.

 \bibitem{Ro1} G. C. Rota, Baxter algebras and combinatorial identities. I, II, {\em Bull. Amer. Math. Soc.} {\bf 75} (1969), 325-329; 330-334.

 \bibitem{Tu} V. G. Turaev, Homotopy quantum field theory. Appendix 5 by M. M\"{u}ger and Appendices 6 and 7 by A. Virelizier. {\em EMS Tracts in Mathematics}, {\bf 10}. European Mathematical Society (EMS), Z\"{u}rich, 2010. xiv+276 pp.

 \bibitem{Tu1} V. G. Turaev, Homotopy field theory in dimension 3 and crossed group-categories, preprint, arXiv:math.GT/0005291.

 \bibitem{Vin} E. B. Vinberg, The theory of homogeneous convex cones, {\em Transl. Moscow Math. Soc.} {\bf 12} (1963), 340-403.

 \bibitem{Vi02} A. Virelizier, Hopf group-coalgebras. {\em J. Pure Appl. Algebra} {\bf 171} (2002), 75-122.

 \bibitem{W09} S. H. Wang, Turaev group coalgebras and twisted Drinfeld double. {\em Indiana Univ. Math. J.} {\bf 58}(2009), 1395-1417.

 \bibitem{Xu} X. P. Xu, Novikov-Poisson algebras. {\em J. Algebra} {\bf 190} (1997), 253-279.

 \bibitem{YW} D. D. Yan, S. H. Wang, Drinfel'd construction for Hom-Hopf T-coalgebras. {\em Internat. J. Math.} {\bf 31} (2020), 2050058, 31 pp.

 \bibitem{GTY1} H. Y. Yu, L. Guo, J.-Y. Thibon, Weak composition quasi-symmetric functions, Rota-Baxter algebras and Hopf algebras. {\em Adv. Math.} {\bf 344} (2019), 1-34.

 \bibitem{ZGG1} T. J. Zhang, X. Gao, L. Guo, Reynolds algebras and their free objects from bracketed words and rooted trees. {\em J. Pure Appl. Algebra} {\bf 225} (2021), 106766.

 \bibitem{ZGG} Y. Zhang, X. Gao, L. Guo, Matching Rota-Baxter algebras, matching dendriform algebras and matching pre-Lie algebras. {\em J. Algebra} {\bf 552} (2020), 134-170.

 \bibitem{ZG01} Y. Y. Zhang, X. Gao, Free Rota-Baxter family algebras and (tri)dendriform family algebras. {\em Pacific J. Math.} {\bf 301} (2019), 741-776.

 \bibitem{ZGM} Y. Y. Zhang,  X. Gao, D. Manchon, Free (tri)dendriform family algebras. {\em J. Algebra} {\bf 547} (2020), 456-493.

 \bibitem{ZM03} Y. Y. Zhang, D. Manchon, Free pre-Lie family algebras. arXiv:2003.00917.

 \bibitem{ZGZ}  H. H. Zheng, L. Guo, L. Zhang, Rota-Baxter paired modules and their constructions from Hopf algebras. {\em J. Algebra} {\bf 559} (2020), 601-624.

 \bibitem{Z3} M. Zunino, Yetter-Drinfeld modules for crossed structures. {\em J. Pure Appl. Algebra} {\bf 193} (2004), 313-343.

 \bibitem{Z2} M. Zunino, Double construction for crossed Hopf coalgebras. {\em J. Algebra} {\bf 278} (2004), 43-75.

 \end{thebibliography}
 \end{document}